\documentclass[12pt, reqno]{amsart}
\usepackage{amsmath, amsthm, amscd, amsfonts, amssymb, graphicx, xcolor, slashed}
\usepackage[bookmarksnumbered, colorlinks, plainpages]{hyperref}
\usepackage{amsaddr}
\usepackage[numbers,sort]{natbib}
\newcommand{\pa}{\partial}
\newcommand{\ins}{\int_{\mathbb{S}^2}}
\newtheorem{theorem}{Theorem}[section]
\newtheorem{lemma}[theorem]{Lemma}
\newtheorem{proposition}[theorem]{Proposition}
\newtheorem{corollary}[theorem]{Corollary}
\theoremstyle{definition}

\theoremstyle{remark}
\newtheorem{remark}[theorem]{Remark}
\numberwithin{equation}{section}
\begin{document}
\setcounter{page}{1}

\centerline{}

\centerline{}

\title[]{ Non-time-decaying global classical solutions to nonlinear wave equations in 3D under the null condition}
\author[ ]{Zexian Zhang  }
\address{ Shanghai Center for Mathematical Sciences, Fudan University, Shanghai, China. \\
E-mail:  {\rm\textcolor[rgb]{0.00,0.00,0.84}{23110840019@m.fudan.edu.cn}}}
%\email{\textcolor[rgb]{0.00,0.00,0.84}{23110840019@m.fudan.edu.cn}}
\author[ ]{ Yi Zhou$^{\ast}$ }

\address{ School of Mathematics Science, Fudan University, Shanghai, China.\\
E-mail: {\rm\textcolor[rgb]{0.00,0.00,0.84}{yizhou@fudan.edu.cn}}}
\thanks{$^{\ast}$Corresponding author}
%\email{\textcolor[rgb]{0.00,0.00,0.84}{yizhou@fudan.edu.cn}}
%\dedicatory{This paper is dedicated to Professor ABCD}

%\subjclass[2010]{Primary 46L55; Secondary 44B20.}

%\keywords{Analysis, PDEs, algebra, number theory, applied mathematics}
 
%\date{Received: xxxxxx; Revised: yyyyyy; Accepted: zzzzzz.
%\newline \indent $^{*}$ Corresponding author}

\begin{abstract}
  We present an alternative proof of the global well-posedness of nonlinear wave equations in three spatial dimensions under the null condition, in the regularity regime of the classical local existence theory, assuming smallness of the angular derivatives. Unlike previous methods, which rely on decay in time, our approach is based solely on spatial decay. This provides a new perspective on the problem and offers potential applications to the study of Einstein's equations, which we intend to explore in future work.
\end{abstract}

\maketitle

\section{Introduction}
In seminal works, Klainerman \cite{klainerman_Longtime_1982, MR837683} and Christodoulou \cite{christodoulou_Global_1986} proved the global well-posedness of quasilinear wave equations 
in three space dimensions under the so-called ``null condition''. This gave rise to an extensive body of work, culminating in 
the proof of the global stability of Minkowski space-time \cite{christodoulou_Global_1993,lindblad_Global_2003, lindblad_The_2010,shen_Global_2023} (and, more recently, in the proof of the stability of Kerr), as well as in the global existence
 of nonlinear elastodynamics under the null condition \cite{agemi_Global_2000,sideris_Null_1996,sideris_Nonresonance_2000}.

The classical proofs of these results are based on two different methods: Klainerman's vector field method and Christodoulou's conformal mapping method. Over the years, several other approaches to the global existence problem have been developed, including the $r^p$-weight method of Dafermos and Rodnianski \cite{dafermos_New_2010a} and the space-time resonance method introduced by Germain, Masmoudi and Shatah \cite{germain_Global_2009,germain_Global_2012}. The goal of this paper is to give an alternative proof of this classical result by using the method of bilinear estimates recently developed by the second author and his collaborators \cite{2024Physical,Tu_2024,wang_2022,wang_2023}. This provides a new perspective on the global well-posedness of quasilinear wave equations: it imposes weaker restrictions on the initial data in terms of both regularity and decay, even when compared with the $r^p$-weight method. On the other hand, our method is related in some sense to Klainerman's vector field method, since we make use of angular derivatives of fractional order.

We consider the quasilinear hyperbolic equation of the form
\begin{align}
  \begin{cases} \Box u = h^{i \alpha \beta }\pa_i\pa_{\alpha}u\cdot\pa_{\beta} u, & \text{in}\  [0,T]\times \mathbb{R}^3, \\(u,\pa_{t} u)|_{t=0} = (u_0,u_1) , & \text{in}\  \mathbb{R}^3, \end{cases}	\label{quasi}
   \end{align} 

here and in what follows, $\Box = \pa^2_t-\Delta$ denotes the usual d'Alembertian, the lowercase Latin indices $i,j,k$ range from 1 to 3, and the Greek indices $\alpha,\beta,\gamma$ range from 0 to 3 in expressions, where the index $0$ stands for the time variable.

We assume that the coefficients  $h^{\alpha \beta \gamma}$ are constants satisfying the null condition, namely 
\begin{equation}
  h^{\alpha\beta \gamma}\xi_\alpha\xi_\beta\xi_\gamma = 0,\ \text{for any null vector}\ \xi.
\end{equation}

By a simple symmetrization procedure, the coefficients can further be assumed to satisfy 
\begin{equation}
  h^{\alpha \beta \gamma} = h^{\beta \alpha \gamma},\ h^{00\gamma}=0.
\end{equation} 

The critical regularity of \eqref{quasi} is governed by its scaling. Standard dimensional analysis shows that the critical Sobolev exponent for the norm $\|u \|_{H^s}$ is $s_c = \frac{5}{2}$. Indeed, the equation is invariant under the scaling \(u_\lambda(t,x) = \lambda^{-1} u(\lambda t, \lambda x)\), and a direct computation gives  $\| u_\lambda\|_{\dot{H}^s_x} = \lambda^{s-\frac{5}{2}}\|u\|_{\dot{H}^s_x} $, so that the homogeneous Sobolev norm is scale invariant precisely when $s = \frac{5}{2}$ in three dimensions.

In the classical local existence theory, the standard energy estimates
\begin{equation}
  \|(u,\pa_t u)(t)\|_{H^s\times H^{s-1}}\lesssim \exp\Big(\int^t_0 \|\pa^2 u \|_{L^\infty}\Big)  \|(u_0,u_1)\|_{H^s \times H^{s-1}} 
\end{equation}
together with the Sobolev embedding $H^{\frac{3}{2}+}_x\subseteq L^\infty_x$ in dimension 3, require initial data with regularity \(s > s_c + 1 = \frac{7}{2}\). In contrast, for semilinear wave equations, global solutions can be constructed under much lower regularity, sometimes even at the critical level, by exploiting the null structure and Strichartz estimates. However, for quasilinear equations, the loss of derivatives in the nonlinearity imposes a more stringent regularity requirement.

Our main result shows that global well-posedness can be achieved within this classical local existence regime, i.e., for \(s > s_c +1 =\frac{7}{2}\), without relying on any decay of the solution in time. Instead, we rely solely on spatial decay and angular regularity. Our assumptions on the initial data are almost scaling invariant: roughly speaking, the smallness condition reads
$$\|\pa u[0]\|_{H^{\frac{5}{2}+3\delta}}^{\frac{1}{2}}\|\Lambda_\omega^{2+\delta}\pa u[0]\|_{H^{\frac{1}{2}+\delta}}^{\frac{1}{2}}\ll 1,$$
under which we prove that the solution exists globally and satisfies uniform energy bounds. This is made possible by a combination of the angular Littlewood-Paley decomposition, novel bilinear estimates, and a new type of div-curl lemma developed in \cite{2024Physical}. In particular, we do not require the solution to decay in time, which is a crucial difference from all previous methods.

\begin{theorem}\label{main}
  Let $\delta>0$ be a sufficiently small real number, $\Lambda_\omega:= (1-\Delta_{\mathbb{S}^2})^{\frac{1}{2}}$. Given the initial data $\pa u[0]:=(\pa_x u_0,u_1)$ satisfying
  \begin{equation}
    \|\pa u[0]\|_{H^{\frac{5}{2}+3\delta}}\le  M,\ \| \Lambda_\omega^{2+\delta}\pa u[0]  \|_{H^{\frac{1}{2}+\delta}} \le N,
  \end{equation}
if the following almost scaling invariant quantity is sufficiently small, namely
\begin{equation}
  M^{1+\frac{\delta}{2}}N^{1+\frac{\delta}{2}} + M^{1-\frac{\delta}{4}}N^{1+\frac{ 5\delta}{4}} \ll 1,
\end{equation}
  then the Cauchy problem \eqref{quasi} has a unique global solution $u$ such that 
\begin{equation}
  \| \pa u\|_{C(H^{\frac{5}{2}+3\delta})}\lesssim M,\  \| \Lambda_\omega^{2+\delta}\pa u\|_{L^\infty(H^{\frac{1}{2}+\delta})}\lesssim N.
\end{equation}
Moreover, the solution depends continuously on the initial data in $C(H^{\frac{3}{2}+3\delta})$. More precisely, given two different solutions $u,v$ as above, we have 
\begin{equation}\label{lip}
  \|\pa u - \pa v\|_{C(H^{\frac{3}{2}+3\delta})}\lesssim \|\pa u[0]-\pa v[0] \|_{H^{\frac{3}{2}+3\delta}}.
\end{equation}
\end{theorem}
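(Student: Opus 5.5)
The proof will be a bootstrap (continuity) argument built on local well-posedness in $H^{\frac52+3\delta}$. By the classical local theory, a solution exists on some $[0,T]$ and can be continued as long as $\int_0^T\|\pa^2 u\|_{L^\infty}\,dt<\infty$. Fix constants $C_0$ large and the smallness as in the theorem, and assume on $[0,T]$ the bootstrap bounds
\begin{equation*}
\|\pa u\|_{L^\infty_t H^{\frac52+3\delta}}\le 2C_0M,\qquad \|\Lambda_\omega^{2+\delta}\pa u\|_{L^\infty_tH^{\frac12+\delta}}\le 2C_0N .
\end{equation*}
The aim is to improve both bounds to $C_0M$ and $C_0N$ with constants independent of $T$. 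Since no time decay is available, the time-integrated nonlinear term in the energy identity must be controlled by a \emph{spacetime bilinear estimate} rather than by $\int\|\pa^2u\|_{L^\infty}$.

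\textbf{Step 1: energy identity.} Apply $\langle D\rangle^{s}$ (and separately $\Lambda_\omega^{2+\delta}\langle D\rangle^{\frac12+\delta}$, using that $\Lambda_\omega$ commutes with $\Box$ and, up to controllable commutators, with the null form) to \eqref{quasi}. Multiply by $\pa_t$ of the result and integrate over $[0,T]\times\mathbb R^3$. The quasilinear top-order term $h^{i\alpha\beta}\pa_\beta u\,\pa_i\pa_\alpha(\cdot)$ is handled by integration by parts, giving a modified energy. All remaining errors take the form
\begin{equation*}
\int_0^T\!\!\int Q(\pa u,\pa \nabla^{s} u)\,\pa_t\nabla^{s}u \,dx\,dt ,
\end{equation*}
with $Q$ a null form, plus commutator terms.

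\textbf{Step 2: spatial decay from angular regularity.} Using the angular Littlewood--Paley decomposition, the trace/Sobolev inequality on spheres gives weighted $L^\infty$-type bounds of the shape
\begin{equation*}
\|r\,\pa u\|_{L^\infty_r L^\infty_\omega}\lesssim \|\Lambda_\omega^{1+\delta}\pa u\|_{H^{\frac12+\delta}_x},
\end{equation*}
so the data control spatial decay $r^{-1}$ at each fixed time, uniformly in $t$.

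\textbf{Step 3: the null structure.} I would then invoke the div-curl lemma of \cite{2024Physical}. For $\Box\phi=F$, the current $\pa_\alpha\phi\,\pa_\beta\psi-\pa_\beta\phi\,\pa_\alpha\psi$ (and $Q_0$) can be written as a spacetime divergence plus terms involving $F$. Integrating in spacetime reduces the error to boundary terms at $t=0,T$ (bounded by energies) and to terms cubic in the nonlinearity. Combined with Step 2, the null form factors as
\begin{equation*}
Q(\pa u,\pa v)\sim r^{-1}\bigl(\Omega u\cdot \pa v + \pa u\cdot\Omega v\bigr) + (\text{good derivative})\cdot(\text{bad}).
\end{equation*}
The $r^{-1}$ gain, together with Hardy-type bilinear estimates $\int\!\!\int r^{-2}|\cdot|^2$, becomes time integrable through the divergence structure rather than through decay in $t$.

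The bookkeeping gives $\text{Energy}(T)\le \text{Energy}(0)+C\,(\text{energy})\,M^{\delta/2}N^{1+\delta/2}$-type products; the fractional powers come from interpolating between the $M$-norm and the $N$-norm to place $\Lambda_\omega^{1+}$ and $\langle D\rangle^{\frac32+}$ on the low-frequency factor. The stated smallness then closes the bootstrap. Uniqueness and \eqref{lip} follow by running the same estimate for $w=u-v$ at regularity $\frac32+3\delta$ (one derivative less, to absorb the quasilinear loss), using the bounds on $u,v$ as coefficients.

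\textbf{Main obstacle.} The hardest step is the bilinear high-low estimate at top order: the high-frequency factor carries no angular regularity beyond $\frac52+3\delta$ derivatives. The $r^{-1}$ gain must therefore land on the low-frequency factor, and the residual term must be shown to be a divergence via the div-curl lemma. I would first try a frequency-localized version, Littlewood--Paley decomposing in both $|\xi|$ and angular frequency and summing the dyadic pieces, with the $\delta$ losses absorbed by the $3\delta$ versus $\delta$ gap in the assumed norms.
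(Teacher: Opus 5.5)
Your top-level structure matches the paper: classical local theory, an a priori bootstrap, and the difference equation one derivative lower for \eqref{lip}. But Step~3 does not supply what the whole argument hinges on, namely a bound, uniform in $T$, for the spacetime error $\int_0^T\!\!\int Q(\pa u,\pa\nabla^s u)\,\pa_t\nabla^s u\,\mathrm{d}x\,\mathrm{d}t$.
\begin{itemize}
\item Step~2 and Hardy's inequality are fixed-time statements. On their own they only give a bound that grows linearly in $T$.
\item Writing $Q_{\alpha\beta}$ or $Q_0$ as a divergence and integrating by parts does not leave only boundary terms. The derivative lands on the third factor $\pa_t\nabla^s u$, producing a new bulk term $\int\!\!\int \pa u\cdot Q(\pa\nabla^s u,\pa_t\nabla^s u)$. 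This term carries an extra derivative on the high-frequency factors, which is a loss at top order for a quasilinear equation, and it is no more time-integrable than what you started with.
\item Trading $Q_0$ for $\Box(\phi\psi)$ plus ``terms involving $F$'' is a normal form transformation, and it loses derivatives here for the same reason.
\end{itemize}
So ``becomes time integrable through the divergence structure'' has no mechanism behind it, and your final bookkeeping inequality cannot be derived as described.

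What is missing are two genuine spacetime estimates.

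First, you need a local energy (Morawetz/KSS) estimate for $\Box_{\bar g}v=N(v)$. The paper obtains it from the momentum balance law with multiplier $r/(r+R)$:
$\sup_{y_0}\int\!\!\int r^{-(1-\delta_0)}\langle r\rangle^{-2\delta_0}\big(e(v^{y_0})+r^{-2}(v^{y_0})^2\big)\lesssim E(v)$.
Here $E(v)$ is the initial energy plus the $L^1_{t,x}$ norms of the source densities. The translations are needed because Littlewood--Paley kernels enter through the $L$-notation. A Step~2 type bound is used only to verify $\|\langle r\rangle^{\delta_0}\bar h\|_{L^\infty}\ll 1$ at this point.

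Second, the paper's div-curl lemma is not the divergence identity for null forms. It is a $1+1$ dimensional compensated-compactness estimate in $(t,r)$. Applied to the spherically averaged energy law of $v^{y_0}_\lambda$ and the $r^{-1+\delta_0}$-weighted momentum law of $w_{\mu,\nu}$, it bounds
$\int\!\!\int r^{3+\delta_0}\big(\ins e_\pm(v)\ins e_\mp(w)+\ins e_\omega(v)\ins e(w)+\ins e(v)\ins e_\omega(w)\big)\,\mathrm{d}t\,\mathrm{d}r$
by $E(v)E(w)$, up to frequency factors.

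This controls only products of spherical means. Angular regularity is therefore spent as the Bernstein cost $\nu^{1+\delta_0}$ of placing one factor in $L^\infty_\omega$, which yields bounds on $\|r^{\frac{1-\delta_0}{2}}\langle r\rangle^{\delta_0}G(w,v)\|_{L^2_{t,x}}$. This, not pointwise $r^{-1}$ decay, is why the $\Lambda_\omega^{2+\delta}$ norm is needed.

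Cauchy--Schwarz against the complementary weight from the local energy estimate then puts every source density in $L^1_{t,x}$ uniformly in $T$. The bootstrap is closed on $E(u_\lambda)$ and $E(u_{\lambda,\nu})$ using frequency envelopes and interpolated refined bounds with constants $(MN)^{\frac12}$ and $(M^{1-\frac{\delta}{4}}N^{1+\frac{5\delta}{4}})^{\frac{1}{2+\delta}}$. That is exactly where the stated smallness condition comes from.
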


\begin{remark}
The almost scaling invariant nature of the smallness condition makes Theorem \ref{main} applicable, in a sense, to short pulse initial data, introduced by Christodoulou \cite{christodoulou_Formation_2009} and subsequently generalized in \cite{klainerman_On_2012}; more recently such data have been used in the study of nonlinear wave equations \cite{ding_On_2024,shen_Critical_2025}. In these works the initial data are concentrated in a thin shell of width $\eta$, are smooth in the angular variable $\omega\in\mathbb{S}^2$, and the smallness is imposed only on the angular directions and along the outgoing derivative $D_+:=\pa_t+\pa_r$, while the incoming derivative $D_-:=\pa_t-\pa_r$ is allowed to be large. Since our data are prescribed at $t=0$ rather than on a null hypersurface, we write such data, following the notation of \cite{ding_On_2024}, in the form
\begin{equation}
  u_0(x)=\eta^{2-\sigma}\varphi_0(\eta^{-1}\bar u,\omega),\qquad u_1(x)=\eta^{1-\sigma}\varphi_1(\eta^{-1}\bar u,\omega),
\end{equation}
where $\bar u:=r-t $ is an outgoing null coordinate (so that $\bar u(0,x)=r $), $\varphi_0,\varphi_1\in C_0^\infty((0,\infty)\times\mathbb{S}^2)$, $0<\sigma<1$ is fixed, and $\eta>0$ is sufficiently small; the parameters denoted by $\delta$ and $\varepsilon_0$ in \cite{ding_On_2024} correspond to our $\eta$ and $\sigma$, respectively. A direct computation gives
$$\|\pa u[0]\|_{H^{\frac{5}{2}+3\delta}}\sim\eta^{-\sigma-3\delta},\qquad \|\Lambda_\omega^{2+\delta}\pa u[0]\|_{H^{\frac{1}{2}+\delta}}\sim\eta^{2-\sigma-\delta},$$
so that the almost scaling invariant quantity in Theorem \ref{main} is of size $\eta^{1-\sigma-2\delta}\ll 1$ for $\eta$ sufficiently small and $\sigma \le 1-3\delta$. Hence the data above are admissible in Theorem \ref{main}.
\end{remark}

The paper is organized as follows. In Section 2, we introduce notation and preliminaries, including the angular Littlewood–Paley theory and commutator estimates. Section 3 is devoted to balance laws and local energy estimates with translations. In Section 4, we present a new type of div-curl lemma and derive the core null form estimates. Section 5 sets up the frequency envelope bootstrap argument. Finally, Section 6 completes the proof of global well-posedness and establishes the continuous dependence on the initial data.

\section{Preliminaries}
\subsection{Notations and conventions}
\begin{enumerate}
   \item We use $A\lesssim B $ to denote the statement that $A \le  CB$ for some absolute constant $C$, and $A \sim    B$ to denote the statement $A\lesssim B\lesssim A$. We shall choose small exponents such that 
   $$0<\delta_0\ll \delta \ll 1.$$
   \item Given a function $f(x)$ on $\mathbb{R}^{k}$,  we use $\mathcal{F}f(\xi) = \hat{f}(\xi)$ to denote its spatial Fourier transform, and $\mathcal{F}^{-1}g(x) = \check{g}(x)$ to denote its inverse Fourier transform.
   \item We express the points in $\mathbb{R}^3$ as 
$$x = (x_1,x_2,x_3) = (x^1,x^2,x^3) = r\omega,$$
where $\omega=(\omega_1,\omega_2,\omega_3)=\frac{x}{r}\in\mathbb{S}^2$ is the unit radial vector, and we use the convention
 $u^{y_0}(x) = u(x-y_0)$ for the translation of $u$ by $y_0$. In particular, we write $r_{y_0} := |x-y_0|$, and we use $\langle r_{y_0}\rangle$ to denote
 $$\langle r_{y_0}\rangle:=  (1+r_{y_0}^2)^{\frac{1}{2}} . $$

   \item For any scalar function $f$, we write the spherical integral in short as
$$\int_{\mathbb{S}^2}f=\int_{\mathbb{S}^2}f \mathrm{d}\omega := \int_{\mathbb{S}^2}f(r\omega)\mathrm{d}\mu_{\mathbb{S}^2}(\omega),\ \|f\|_{L^p_{\omega}}:=\Big(\ins |f(r\omega)|^p \mathrm{d}\mu_{\mathbb{S}^2}(\omega)\Big)^{\frac{1}{p}},$$
where $\mu_{\mathbb{S}^2}$ is the standard sphere measure. We also introduce the notation for the mixed Lebesgue norm $\|\cdot\|_{L^p_r L^q_\omega}$ as 
$$\ \|f\|_{L^p_{r}L^q_{\omega}}:= \Big(\Big(\ins |f(r\omega)|^{q}\mathrm{d}\mu_{\mathbb{S}^2}(\omega)\Big)^{\frac{p}{q}}\mathrm{d}r\Big)^{\frac{1}{p}},$$
and it is clear that $\|rf\|_{L^2_rL^2_\omega} = \|f\|_{L^2_x}$.
   \item We use $\pa_x$ to denote the spatial gradient, $\pa_\omega$ to denote the angular gradient on the sphere,  and $\pa$ to denote the full space-time gradient. The spatial gradient can be further decomposed into radial and angular components
$$\pa_i =\omega_i \pa_r+\slashed{\pa}_i := \omega_i \pa_r + r^{-1}\omega^j\Omega_{ji},$$
where $\Omega_{ij} = x_i\pa_j-x_j\pa_i$. It is clear that
$$|\slashed{\pa} u|^2:=\sum_{i}|\slashed{\pa}_i u|^2 = r^{-2}|\pa_\omega u|^2.$$
   \item For scalar-, vector- or tensor-valued quantities $A$ and $B$, we write $A\cdot B$ to denote a linear combination of componentwise products, with coefficients that are constants or functions uniformly bounded by an absolute constant $C$; e.g., $\pa \Pi\cdot\pa \Phi$ stands for a linear combination of $\pa_\alpha \Pi_\gamma \cdot \pa_\beta \Phi_{\delta\eta}$, and likewise $A\cdot B\cdot C$ for products of components of $A,B,C$. The dot product is distributive, $A\cdot(B+C)=A\cdot B+A\cdot C$, and translations act on the whole expression, $(A\cdot B)^{y_0}=A^{y_0}\cdot B^{y_0}$.
\end{enumerate}

\subsection{Frequency decomposition and Sobolev spaces on the sphere}
\subsubsection{Standard Littlewood-paley operators}
For a dyadic number $\lambda \in 2^{\mathbb{N}}, $ we define the standard inhomogeneous Littlewood-Paley operator $P_\lambda$. Let $\chi$ be the smooth cutoff to the region $[-1,1]^n$, and $P_\lambda,P_{\le\lambda}$ are defined by
\begin{equation}
  \widehat{P_{\le \lambda}f}(\xi) := \chi(\lambda^{-1}\xi)\hat{f}(\xi),\ P_{1}:= P_{\le 1}\ \text{and}\ P_{\lambda} = P_{\le\lambda} - P_{\le\frac{\lambda}{2}}\ \text{for}\  \lambda\ge 2.
\end{equation}
Thus, we have
\begin{equation}
  1 = \sum_{\lambda \in 2^{\mathbb{N}}} P_{\lambda}.
\end{equation}
We often use the shorthand
\begin{equation}
  u_{\lambda}:=P_\lambda u 
\end{equation}
for the Littlewood-Paley pieces of $u$.

Given dyadic numbers $\lambda,\lambda_1,\lambda_2$, we write $\lambda_{\text{max}}\ge \lambda_{\text{med}}\ge  \lambda_{\text{min}}$ for the maximum, median, and minimum of $\lambda,\lambda_1,\lambda_2$ respectively, and analogously for the angular frequencies $\nu,\nu_1,\nu_2$.  It is clear that $P_\lambda(u_{\lambda_1}v_{\lambda_2})$ vanishes unless $\lambda_{\text{max}}\sim \lambda_{\text{med}}$.

It is notable that, by the boundedness of the kernel, for any Lebesgue norm $\|\cdot\|_{X}$ we have 
\begin{equation}\label{re}
  \|u_\lambda \pa_x v_\mu\|_{X}\lesssim \sup_{y_0}\mu\|u_\lambda v_\mu^{y_0}\|_{X}.
\end{equation} 
\subsubsection{Angular Littlewood-Paley operators}
Following Guo \cite{guo_Global_2023}, we introduce angular Littlewood-Paley type operators via spectral decomposition of the Laplacian on ${\mathbb {S}}^2.$ Let $\Pi_n$ denote the $L^2$-projector onto the $n$-th eigenspace of the
spherical Laplacian $-\Delta_{\mathbb{S}^2}$ associated to the eigenvalue $\lambda_n = n(n+1)$. The following fact about angular regularity holds (see \cite[Section 2.8.4]{atkinson_Spherical_2012})  
\begin{equation}
  \Pi_n f (x) = \int _{{\mathbb {S}}^2}f(r \omega' ){\mathfrak {Z}}_n(\langle \omega' ,\omega\rangle )d\mu _{{\mathbb {S}}^2}(\omega' ),
\end{equation}
where
\begin{equation}
   {\mathfrak {Z}}_n(x)=\frac{2n+1}{4\pi }L_n(x),\  L_n(z)=\frac{1}{2^n n! }\frac{\mathrm{d}^n}{\mathrm{d}z^n}[(z^2-1)^n]. 
\end{equation}

 As before, for $\nu\in 2^{\mathbb {N}}$ we define the angular Littlewood-Paley operators $R_\nu, R_{\le  \nu}$ by

\begin{equation}
  R_{\le \nu }f  =\sum _{n\ge 0}\chi(\nu^{-1}n)\Pi_n f ,\ R_{1}:= R_{\le 1}\ \text{and}\ R_{\nu} = R_{\le\nu} - R_{\le\frac{\nu}{2}}\ \text{for}\ \nu\ge 2.
\end{equation}

These operators are bounded on $L^2$ and self-adjoint, and their key properties mirror those of the standard Littlewood-Paley operators. They are summarized in the following proposition.

\begin{proposition}
\begin{enumerate}
  \item $R_\nu$ commutes with the following operators 
  \begin{equation}
    [\pa_\omega,R_\nu]=[\pa_r ,R_\nu]=[P_{\lambda},R_\nu]=0. 
  \end{equation} 
  \item $R_\nu$ has the almost orthogonal properties in the sense that 
 \begin{equation}
   1=\sum _{\nu}R_\nu,\  R_\nu R_{\nu'}=0\ {unless}\  \nu\sim \nu'  ,\ \text{therefore}\  \|f\|_{L^2_\omega}^2 \sim  \sum_\nu \|R_\nu f\|_{L^2_\omega}^2.
 \end{equation}
Moreover, for $\{i,j,k\} = \{1,2,3\} $, we have
\begin{equation}
  R_{\nu_i}(R_{\nu_j}f\cdot R_{\nu_k}g) =0,\ \text{unless}\  \nu_{\text{max}}\sim \nu_{\text{med}}.
\end{equation}
  \item %The kernels for $R_\nu$ are integrable on the sphere,i.e.
  %$$R_\nu f(r \omega) = \int_{\mathbb{S}^2}f(r\omega')K_\nu(\omega,\omega')\mathrm{d}\mu_{\mathbb{S}^2}(\omega'),\  \sup_{\omega}\|K_\nu(\omega,\omega')\|_{L^1_{\omega'}}+\sup_{\omega'}\|K_\nu(\omega,\omega')\|_{L^1_{\omega}}\lesssim 1,$$
   $R_\nu, R_{\le \nu }$ are bounded on $L^q, 1\le q\le \infty$, namely \begin{equation}
      \Vert R_\nu f\Vert _{L^q}+\Vert R_{\le \nu }f\Vert _{L^q}\lesssim \Vert f\Vert _{L^q}. 
   \end{equation}
  \item We have Bernstein inequalities as
\begin{equation}
 \Vert \pa_\omega R_\nu f\Vert _{L^q}\sim \nu \Vert R_\nu f\Vert _{L^q} .  
\end{equation}
\end{enumerate}
\end{proposition}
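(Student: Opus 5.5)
The four properties all follow from the spectral definition $R_{\le\nu}=\sum_n\chi(\nu^{-1}n)\Pi_n$ together with the explicit zonal kernel $\mathfrak{Z}_n$. I would prove them in the order (1), (2), (3), (4), since the $L^q$ bounds in (3) are the input for the Bernstein inequality in (4).

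\textbf{Step 1: commutation.} $R_\nu$ acts on each sphere $\{|x|=r\}$ separately. It is given by integration against the kernel $K_\nu(\langle\omega,\omega'\rangle)=\sum_n(\chi(\nu^{-1}n)-\chi(2\nu^{-1}n))\mathfrak{Z}_n(\langle\omega,\omega'\rangle)$, which does not depend on $r$. Hence $[\pa_r,R_\nu]=0$.

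The kernel is zonal, so $R_\nu$ is a function of $\Delta_{\mathbb{S}^2}$ and therefore commutes with the rotation generators $\Omega_{ij}$. These generate $\pa_\omega$ (interpreted componentwise through $\Omega_{ij}$), so $[\pa_\omega,R_\nu]=0$.

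For $P_\lambda$: it is a Fourier multiplier with radial symbol, so it commutes with every rotation. Every rotation-commuting operator preserves the decomposition of $L^2(\mathbb{R}^3)$ into $L^2(r^2dr)\otimes\mathcal{H}_n$ and acts radially on each piece. In particular $P_\lambda$ commutes with each $\Pi_n$, and hence with $R_\nu$.

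\textbf{Step 2: almost orthogonality and the product rule.} The multipliers $\chi(\nu^{-1}n)-\chi(2\nu^{-1}n)$ telescope to $1$, which gives $\sum_\nu R_\nu=1$. Their supports in $n$ are disjoint unless $\nu\sim\nu'$, so $R_\nu R_{\nu'}=0$ unless $\nu\sim\nu'$. The square-function equivalence $\|f\|_{L^2_\omega}^2\sim\sum_\nu\|R_\nu f\|_{L^2_\omega}^2$ then follows from Parseval in the $\Pi_n$ basis, since each $n$ meets only $O(1)$ of the multipliers.

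For the product statement I would use the fact that a product $Y_a Y_b$ of spherical harmonics of degrees $a$ and $b$ expands only in degrees between $|a-b|$ and $a+b$ (Clebsch–Gordan, or simply that the product is a polynomial of degree $\le a+b$). Thus $R_{\nu_j}f\cdot R_{\nu_k}g$ lives on degrees in $[|\nu_j-\nu_k|\cdot O(1),\,O(\nu_j+\nu_k)]$. Pairing the result with $R_{\nu_i}$ vanishes unless the two largest of the three frequencies are comparable, exactly as for Fourier frequencies.

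\textbf{Step 3: $L^q$ boundedness (the main obstacle).} By duality and interpolation it suffices to show that $\sup_\omega\|K_\nu(\langle\omega,\cdot\rangle)\|_{L^1_{\omega'}}\lesssim1$ uniformly in $\nu$, and similarly for $R_{\le\nu}$. This is the genuinely analytic step: it is a uniform $L^1$ bound for smooth Cesàro-type means of Legendre expansions. My plan would be to write $K_{\le\nu}(\cos\theta)=\sum_n\chi(n/\nu)\frac{2n+1}{4\pi}L_n(\cos\theta)$. Summation by parts, together with the Hilb/Darboux asymptotics $L_n(\cos\theta)\approx(n\sin\theta)^{-1/2}\cos((n+\tfrac12)\theta-\tfrac\pi4)$, should give the localisation estimate
\begin{equation*}
|K_{\le\nu}(\cos\theta)|\lesssim_N\nu^2(1+\nu\theta)^{-N}
\end{equation*}
(with the symmetric estimate near $\theta=\pi$). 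Integrating this against $\sin\theta\,d\theta$ gives an $O(1)$ bound. Alternatively, one can cite the standard result that $\varphi(\sqrt{-\Delta_{\mathbb{S}^2}}/\nu)$ has such a kernel (Sogge, Filbir–Mhaskar, or Guo \cite{guo_Global_2023}), and I would likely invoke that. Once the kernel bound is in hand, Young's/Schur's inequality on the sphere gives boundedness on $L^q_\omega$, and integrating in $r$ gives boundedness on $L^q_x$.

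\textbf{Step 4: Bernstein.} For the upper bound, I write $\pa_\omega R_\nu=\pa_\omega\tilde R_\nu R_\nu$ with a slightly fattened $\tilde R_\nu$. I then show that $\nu^{-1}\pa_\omega\tilde R_\nu$ has an $L^1$-bounded kernel, by differentiating the kernel estimate of Step 3, which costs a factor of $\nu$.

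For the lower bound, I write $R_\nu f=\nu^{-2}\,m_\nu(-\Delta_{\mathbb{S}^2})\,\Delta_{\mathbb{S}^2}R_\nu f$, with $m_\nu(s)=\nu^2\tilde\chi(\sqrt{s}/\nu)/s$ a symbol of the same type. Writing $\Delta_{\mathbb{S}^2}=\operatorname{div}_\omega\pa_\omega$ and using the same kernel bounds (now for $m_\nu(-\Delta_{\mathbb{S}^2})\operatorname{div}_\omega$) gives $\|R_\nu f\|_{L^q}\lesssim\nu^{-1}\|\pa_\omega R_\nu f\|_{L^q}$. The case $\nu=1$ must be interpreted with $\Lambda_\omega$ (i.e.\ $\pa_\omega$ replaced by $\Lambda_\omega$), since degree-zero modes are annihilated by $\pa_\omega$.
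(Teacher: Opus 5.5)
Your proposal is correct and follows essentially the paper's route. The paper gives no argument of its own and defers to \cite[Appendix A.1]{guo_Global_2023}, where (1)--(2) are the spectral bookkeeping you describe and (3)--(4) rest on uniform $L^1$ bounds for the zonal kernels obtained from Jacobi-polynomial asymptotics, the same machinery the paper reproduces in its appendix for Lemma \ref{ancom}. If you prove Step 3 yourself rather than cite it, do the summation by parts on the Legendre series \emph{before} inserting asymptotics, raising the index to $P^{(2,2)}_n$ as that appendix does so that the coefficients are supported on $n\sim\nu$: plugging the one-term Darboux formula directly into $\sum_n\chi(n/\nu)(2n+1)L_n(\cos\theta)$ leaves a remainder bounded only by $\nu^{1/2}\theta^{-3/2}$, and also misses the range $n\lesssim\theta^{-1}$, so the resulting bound is not uniformly integrable against $\sin\theta\,\mathrm{d}\theta$.
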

We refer the reader to \cite[Appendix A.1]{guo_Global_2023} for the proof of this proposition.  

Although $R_\nu$ does not commute with $\pa_{x}$, we observe that the $\omega_i$ are polynomials of degree 1 on the sphere, which allows us to write
$$R_\nu(\pa_i f) = R_\nu[(\omega_i \pa_r + r^{-1}\omega^j\Omega_{ji})  f] =\sum_{\nu'\sim \nu} R_\nu(\omega_i R_{\nu'}\pa_rf + r^{-1}\omega^j R_{\nu'}\Omega_{ji}f )  = \sum_{\nu'\sim  \nu}R_\nu(\pa_x R_\nu' f).  $$

We also use the shorthand for the angular Littlewood-Paley pieces of $u$, namely 
\begin{equation}
  u_{\lambda,\nu}:=P_\lambda R_\nu u .
\end{equation}

\subsubsection{Sobolev spaces on the sphere}
We define the operator $\Lambda_\omega^s = (1-\Delta_{\mathbb{S}^2} )^{\frac{s}{2}}$ by 
\begin{equation}
  \Lambda_\omega^s f := \sum_n (\lambda_n + 1)^{\frac{s}{2}}\Pi_n f, 
\end{equation}
and we let the spherical Sobolev spaces $H^{s}(\mathbb{S}^2)$ be the Banach spaces equipped with the norm
\begin{equation}
  \left\| f \right\|_{H^{s}(\mathbb{S}^2)} := \left\| \Lambda^{s}_\omega f \right\|_{L^2_\omega}.
\end{equation}  

It is clear that 
\begin{equation}
  \left\| f \right\|^2_{H^{s}(\mathbb{S}^2)} \sim  \sum_{\nu} \nu^s \left\| R_\nu f \right\|_{L^2_\omega}^2.
\end{equation} 

Notice that 
$$\begin{aligned}
 & \|R_\nu f(r\omega)\|_{L^\infty_\omega}
 \lesssim&  \left\| f \right\|_{L^2_\omega} \Big\| \sum_{n}\varphi(\nu^{-1}n) {\mathfrak {Z}}_n(\langle \omega',\omega\rangle ) \Big\|_{L^2_{\omega'}}
 \lesssim& \nu  \left\| f \right\|_{L^2_\omega}. \\
\end{aligned}$$
After summation over $\nu$, we obtain the Sobolev embedding on the sphere
\begin{equation}
  \left\| f \right\|_{L^\infty_\omega}\lesssim \left\| f \right\|_{H^{1+\delta_0}(\mathbb{S}^2 )} = \| \Lambda^{1+\delta_0}_\omega f\|_{L^2_\omega}.
\end{equation}
\subsection{Expressions for null forms}

It is well known that the null forms can be decomposed into linear combinations of certain types of null forms. More precisely, we have the following lemma.
\begin{lemma}
  If the affine quadratic form $h^{i \alpha\beta }\pa_{i}\pa_{\alpha } v_1 \pa_{\beta}v_2  $ satisfies the null condition, then 
  \begin{align}
    &~ h^{i\alpha\beta }\pa_i\pa_{\alpha}v_1\pa_{\beta} v_2 = N_{\alpha \beta}(\pa_{x} v_1,v_2) + N_0(\pa_{x} v_1,v_2),
   \end{align}
  where
\begin{equation}
    N_{\alpha \beta}(f,g) = \pa_\alpha f  \cdot \pa_\beta g-  \pa_\alpha g \cdot \pa_\beta f, N_0(v_1,v_2) = \pa_t v_1 \cdot \pa_t v_2 - \pa_x v_1 \cdot \pa_x v_2. 
\end{equation}
Moreover, we have 
\begin{equation}
h^{i\alpha\beta }  \pa_i v_2 \cdot \pa_\alpha v_2 \cdot \pa_\beta v_1 =\pa v_1 \cdot N_{\alpha \beta}(v_1,v_2) + \pa v_1 \cdot N_0( v_1,v_2).
\end{equation}
\end{lemma}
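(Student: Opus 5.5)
This is pure linear algebra on the coefficient tensor, and I would treat it that way. Write $h^{i\alpha\beta}\pa_i\pa_\alpha v_1\pa_\beta v_2 = h^{i\alpha\beta}\pa_\alpha(\pa_i v_1)\pa_\beta v_2$. For each fixed $i$, view $H_i^{\alpha\beta}:=h^{i\alpha\beta}$ as a bilinear form in $(\pa f,\pa g)$ with $f=\pa_i v_1$ and $g=v_2$.

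First I would extract the consequence of the null condition on $H_i$. The cubic form $h^{\gamma\alpha\beta}\xi_\gamma\xi_\alpha\xi_\beta$ with $h^{0\alpha\beta}$ absent (only spatial first index) vanishes on the null cone. The standard algebraic fact is that a cubic polynomial vanishing on the cone $\{\xi_0^2=|\xi|^2\}$ is divisible by $m(\xi)=\xi_0^2-|\xi|^2$. Hence $h^{i\alpha\beta}\xi_i\xi_\alpha\xi_\beta=m(\xi)\,\ell(\xi)$ for some linear form $\ell(\xi)=c^\gamma\xi_\gamma$.

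Next I decompose each $H_i$ into its symmetric and antisymmetric parts in $(\alpha,\beta)$. The antisymmetric part produces exactly linear combinations of $N_{\alpha\beta}(\pa_i v_1,v_2)$, with no condition needed. For the symmetric part, the plan is to compare the fully symmetrized tensor with $m^{(\alpha\beta}c^{\gamma)}$. The difference between a partially symmetric and a fully symmetric tensor in the index pairs $(i,\alpha)$ and $(i,\beta)$ is antisymmetric in some pair. A term antisymmetric in $(i,\alpha)$ gives $\pa_i\pa_\alpha v_1-\pa_\alpha\pa_i v_1=0$. A term antisymmetric in $(i,\beta)$ gives $\pa_\alpha(\pa_i v_1\pa_\beta v_2-\pa_\beta v_1\pa_i v_2)$-type expressions, which I rewrite as $N_{i\beta}(\pa_\alpha v_1,v_2)$ using the commutation of derivatives. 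Here a time derivative $\pa_\alpha$ with $\alpha=0$ could appear; I would use $h^{00\gamma}=0$ and the symmetry $h^{\alpha\beta\gamma}=h^{\beta\alpha\gamma}$ to keep one differentiation spatial, so that the first slot is always $\pa_x v_1$.

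The remaining fully symmetric part $m^{(\alpha\beta}c^{\gamma)}$ contributes terms of the form $c^\gamma m^{\alpha\beta}\pa_\alpha\pa_\gamma v_1\pa_\beta v_2=N_0(\pa_\gamma v_1,v_2)$, together with terms $c^\beta m^{\alpha\gamma}\pa_\gamma\pa_\alpha v_1\pa_\beta v_2$. The latter involve $\Box v_1$, and I must avoid them, since no $\Box$ term appears in the statement. \textbf{This is the main obstacle.} The intended reading seems to be that the statement's right side is schematic ("linear combination"). I would first try to show the $\Box v_1\,\pa v_2$ part cancels: the tensor is symmetric in its first two indices, so its fully symmetric component can be chosen as $m^{i\alpha}$-free. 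Concretely, pick $\ell$ so the representation uses only $m^{\alpha\beta}$ contracted against $(\alpha,\beta)$. I would then check coefficient by coefficient in the small space of such tensors, which is finite-dimensional. If any $\gamma=0$ appears, convert $\pa_t$ through antisymmetrization to $N_{0j}$ forms. The second identity then follows by the same algebra with the roles permuted: contract the tensor against $\pa v_2,\pa v_2,\pa v_1$, symmetrize, and absorb the extra factor $\pa v_1$ into the bounded-coefficient dot product of the paper's conventions.
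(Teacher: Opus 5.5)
The paper gives no argument of its own here (it refers to \cite{li_Nonlinear_2017}), so your plan has to stand alone. It has a genuine gap exactly where you flag one.

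Comparing $h$ with its full symmetrization is what produces the $\Box v_1$ term. $\mathrm{Sym}(h)$ is uniquely determined by the cubic $m(\xi)\ell(\xi)$: it equals $\frac13(m^{i\alpha}c^\beta+m^{\alpha\beta}c^i+m^{\beta i}c^\alpha)$. So it cannot be ``chosen $m^{i\alpha}$-free'', and its contraction contains $\frac13\,\Box v_1\,c^\beta\pa_\beta v_2$, which nothing cancels.

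The missing step is to subtract a non-symmetric tensor instead. Let $P(\xi,\eta)=h^{i\alpha\beta}\xi_i\xi_\alpha\eta_\beta$ and let $m(\xi,\eta)=\xi_0\eta_0-\sum_j\xi_j\eta_j$, the symbol of $N_0$. The remainder $R(\xi,\eta):=P(\xi,\eta)-\ell(\xi)m(\xi,\eta)$ vanishes on the diagonal $\eta=\xi$. Your partial-minus-full symmetrization argument, applied to $R$ rather than to $h$, then gives $R=\sum_\gamma\xi_\gamma\sum_{\alpha,\beta}a^\gamma_{\alpha\beta}(\xi_\alpha\eta_\beta-\xi_\beta\eta_\alpha)$, a combination of $N_{\alpha\beta}(\pa_\gamma v_1,v_2)$. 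Meanwhile $\ell(\xi)m(\xi,\eta)$ is the symbol of $c^\gamma N_0(\pa_\gamma v_1,v_2)$, so no $\Box$ appears. Alternatively, keep your route and rewrite $\Box v_1\,\pa_\beta v_2=N_0(\pa_\beta v_1,v_2)+\sum_\mu m^{\mu\mu}N_{\mu\beta}(\pa_\mu v_1,v_2)$.

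The requirement that the outer derivative be spatial is also not settled by ``using $h^{00\gamma}=0$ and the symmetry''. Three things are needed. First, $c^0=0$, since $P$ vanishes at $\xi=(\xi_0,0,0,0)$ while $m\ell=c^0\xi_0^3$ there. Second, the terms $N_{jk}(\pa_t v_1,v_2)$ convert via $N_{jk}(\pa_t f,g)=N_{0k}(\pa_j f,g)-N_{0j}(\pa_k f,g)$. Third, the terms $N_{0j}(\pa_t v_1,v_2)$ cannot be converted, because their symbol $\xi_0(\xi_0\eta_j-\xi_j\eta_0)$ survives at $\xi=(\xi_0,0,0,0)$; you must show their total coefficients $b_j$ vanish. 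This follows by evaluating at $\xi=(\xi_0,0,0,0)$: $R$ and all other terms vanish there, leaving $\xi_0^2\sum_j b_j\eta_j=0$. In your own bookkeeping, after the $\Box$ conversion, this coefficient is $\frac13(c^j-h^{j00})$. It vanishes because matching the $\xi_0^2\xi_j$ terms of $P(\xi,\xi)=m(\xi)\ell(\xi)$ gives $c^j=h^{j00}$.

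For the second identity, note first that as printed the two sides do not match: the left side is quadratic in $v_2$, the right side quadratic in $v_1$. The version actually used in \eqref{r1} and in the proof of Proposition~\ref{Source} (the term $\pa u_\lambda\cdot G(\pa u,u_\lambda)$) is $\pa v_2\cdot N(v_1,v_2)$.

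Absorbing $\pa v_1$ into the bounded-coefficient dot product is not legitimate. $\pa v_1$ is an unknown, not a coefficient bounded by an absolute constant, and once it is frozen the quadratic form $(h^{i\alpha\beta}\pa_\beta v_1)\pa_i v_2\,\pa_\alpha v_2$ has no null structure. Moreover, ``the same algebra'' via full symmetrization would again leave a stray term $\frac13\,\ell(\pa v_1)N_0(v_2,v_2)$, the pointwise analogue of the $\Box$ term.

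The correct argument is pointwise and simpler than the first identity, since there is no Hessian and no spatial restriction. Set $a=\pa v_2$ and $b=\pa v_1$. The expression $h^{i\alpha\beta}a_ia_\alpha b_\beta-\ell(a)m(a,b)$ vanishes at $b=a$. By the same symmetrization argument it equals $\sum_\mu a_\mu\sum_{\nu,\rho}k^\mu_{\nu\rho}(a_\nu b_\rho-a_\rho b_\nu)$, which is exactly of the form $\pa v_2\cdot N_{\nu\rho}(v_1,v_2)+\pa v_2\cdot N_0(v_1,v_2)$.
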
  
Its proof can be found in \cite[Lemma 12.2.1]{li_Nonlinear_2017}.

Notice that, in terms of the operators $D_\pm$ and the angular derivatives, the above null forms can be expanded as follows:
$$  \begin{aligned}
 &~ N_{0i}(f,g) = \frac{1}{2}\omega_i(D_- f\cdot D_+ g - D_- g\cdot D_+ f) + r^{-1}(\pa_0 f\, \omega^j \Omega_{ji}g - \pa_0 g\,  \omega^j \Omega_{ji}f), \\
 &~ N_{ij}(f,g) =r^{-1} (\pa_j g\, \omega^k \Omega_{ki}f - \pa_i g\,  \omega^k \Omega_{ki}f + \pa_r f\, \Omega_{ij}g) , \\
 &~ N_0(f,g) = \frac{1}{2}(D_+ f\cdot D_- g + D_+ g\cdot D_- f) - r^{-1}( \pa^i g\, \omega^k\Omega_{ki}f - \pa_r f\,  \omega^k \omega^i \Omega_{ki}g),
\end{aligned}$$
where we use the shorthand $D_{\pm} = \pa_t \pm \pa_r$. Thus we can further expand the null form as 
\begin{equation}
  h^{i \alpha\beta }\pa_{i}\pa_{\alpha } v_1 \pa_{\beta}v_2  = D_{\pm }\pa_{x} v_1 \cdot D_{\mp }v_2 + \slashed{\pa} \pa_{x} v_1 \cdot \pa v_2 +  \pa_{x} \pa v_1 \cdot \slashed{\pa} v_2.
\end{equation}

In what follows, we always use the above expansion in the region $\{r\ge 1\}$. We introduce the notation  
\begin{align}
    &~ G(v_1, v_2)=D_{\pm }v_1 \cdot D_{\mp }v_2+ \pa v_1 \cdot \slashed{\pa}v_2+\slashed{\pa} v_1 \cdot \pa v_2,
\end{align}
and it is clear that
\begin{align}
  &  h^{i \alpha\beta }\pa_{i}\pa_{\alpha } v_1 \cdot \pa_{\beta}v_2 = G(\pa_x v_1,v_2),\
  &  h^{i\alpha\beta }  \pa_i v_2 \cdot \pa_\alpha v_2 \cdot \pa_\beta v_1 = \pa v_1 \cdot G(v_1,v_2).\label{r1}
\end{align}

\subsection{Multilinear expressions}
Following Tao \cite{tao_Global_2001}, we introduce a convenient notation for describing multi-linear expressions of product type.

For multilinear operators, let $L$ be the integral form
\begin{equation}
  L(u_1,u_2, \cdots ,u_k)(x)  = \int K(y_1, \cdots ,y_k)u_1^{y_1}(x)\cdots u_k^{y_k}(x)\mathrm{d}y,
\end{equation} 
where $K$ can be an integrable kernel or, more generally, a bounded measure (including, for example, product type expressions). The kernel may change from line to line, but we always require these kernels to have uniformly bounded mass.

This $L$ notation will turn out to be useful for expressing matrix coefficients, Littlewood-Paley multipliers $P_\lambda$, commutator expressions, etc., whenever these structures are not being exploited. For example, the $L$ notation is invariant under inserting or removing the standard Littlewood-Paley operators
\begin{align}
 &~ L(P_\lambda u_1,u_2, \cdots ,u_k) =L( u_1,u_2, \cdots ,u_k),\\
 &~ P_\lambda L( u_1,u_2, \cdots ,u_k) =L( u_1,u_2, \cdots ,u_k).
\end{align} 
The same holds for $P_{>\lambda},\ P_{<\lambda}$, etc. Furthermore, this notation also interacts well with derivatives of Littlewood-Paley operators
\begin{align}
 &~ L(\pa_x P_\lambda u_1,u_2, \cdots ,u_k) =\lambda L( u_1,u_2, \cdots ,u_k).
\end{align}
And the reverse equality is also true for $\lambda\gg 1$
\begin{equation}
  L( P_{\lambda}u_1,u_2, \cdots ,u_k) = \lambda^{-1}L(\pa_x P_\lambda u_1,u_2, \cdots ,u_k). 
\end{equation}
Multilinear estimates are not invariant under separate translations of the factors. To obtain similar bounds for the $L$ notation, we shall allow translations in the multilinear estimates. For example, if we have the bounds for 
$$\sup_{y_0,y_1}\| u_1 u_2^{y_0} u_3^{y_1} \|_{L^p}\le C,$$
then for any multilinear form $L$ with integrable kernel we have 
\begin{align*}
  &~ \|L(u_1,u_2,u_3)\|_{L^p} \\
  \le&~ \sup_{\left\| g \right\|_{L^{p'}}\le 1} \int K(y_1,y_2,y_3)u_1^{y_1}(x)u_2^{y_2}(x)u_3^{y_3}(x)g(x)\mathrm{d}y\mathrm{d}x\\
  =&~ \sup_{\left\| g \right\|_{L^{p'}}\le 1} \int K(y_1,y_2,y_3)u_1(x)u_2^{y_2-y_1}(x)u_3^{y_3-y_1}(x)g(x)\mathrm{d}y\mathrm{d}x\\
  \le&~ \sup_{y_0,y_1}\| u_1 u_2^{y_0} u_3^{y_1} \|_{L^p} \left\| K \right\|_{L^1}
  \lesssim   C.
\end{align*}
\subsection{Commutator estimates  }
\begin{lemma}\label{pcom}
  Given Schwartz functions $f,g$, let $f_\mu:=P_\mu f$, then we have 
  \begin{equation}\label{comm}
    [P_\lambda, f_\mu]g =\lambda^{-1}\mu L(f_\mu, g),
  \end{equation}
  where $[P_\lambda, f_\mu]g :=P_\lambda(f_\mu g)-f_\mu P_\lambda g. $
\end{lemma}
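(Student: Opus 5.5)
The plan is to write the commutator as an explicit integral operator and use the fundamental theorem of calculus to gain the factor $\lambda^{-1}\mu$. Let $K_\lambda(y)=\lambda^3\check{\psi}(\lambda y)$ be the convolution kernel of $P_\lambda$; for $\lambda\ge 2$ the symbol $\psi$ is a smooth bump supported at $|\xi|\sim\lambda$, and for $\lambda=1$ it is $\chi$. Then
\begin{equation*}
[P_\lambda,f_\mu]g(x)=\int K_\lambda(y)\big(f_\mu(x-y)-f_\mu(x)\big)g(x-y)\,\mathrm{d}y .
\end{equation*}
Write the difference of $f_\mu$ values as a line integral:
\begin{equation*}
f_\mu(x-y)-f_\mu(x)=-\int_0^1 y\cdot\nabla f_\mu(x-sy)\,\mathrm{d}s .
\end{equation*}
This gives
\begin{equation*}
[P_\lambda,f_\mu]g(x)=-\int_0^1\!\!\int y K_\lambda(y)\cdot(\nabla f_\mu)^{sy}(x)\,g^{y}(x)\,\mathrm{d}y\,\mathrm{d}s .
\end{equation*}

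This is already a bilinear form of the $L$ type in $(\nabla f_\mu,g)$. The kernel is the pushforward of the measure $yK_\lambda(y)\,\mathrm{d}y\,\mathrm{d}s$ under the map $(y,s)\mapsto(sy,y)$, so its total mass is at most
\begin{equation*}
\int|y||K_\lambda(y)|\,\mathrm{d}y=\lambda^{-1}\int|z||\check\psi(z)|\,\mathrm{d}z\lesssim\lambda^{-1},
\end{equation*}
by scaling and because $\check\psi$ is Schwartz. Hence $[P_\lambda,f_\mu]g=\lambda^{-1}L(\nabla f_\mu,g)$.

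It remains to replace $\nabla f_\mu$ by $\mu f_\mu$, which is the frequency-localized Bernstein-type property already listed for the $L$ notation. Write $\nabla f_\mu=\nabla\tilde P_\mu f_\mu$, where $\tilde P_\mu$ is a fattened Littlewood--Paley multiplier. The operator $\nabla\tilde P_\mu$ is convolution with $\mu\cdot\mu^3(\nabla\check{\tilde\chi})(\mu\,\cdot)$, whose $L^1$ norm is $\lesssim\mu$. This holds for $\mu=1$ as well, since $\chi$ is a smooth compactly supported symbol. Absorbing this convolution into the kernel (a composition of bounded measures, with masses multiplying) gives $[P_\lambda,f_\mu]g=\lambda^{-1}\mu L(f_\mu,g)$, which is \eqref{comm}.

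Nothing here is a real obstacle. The one point needing care is the $\lambda=1$ and $\mu=1$ cases, where the symbols are $\chi$ rather than annular bumps. The kernel estimates above use only the Schwartz decay of $\check\psi$ and $\check{\tilde\chi}$, so these cases go through unchanged. No relation between $\lambda$ and $\mu$ is assumed, so the identity holds as stated; it is only useful when $\mu\lesssim\lambda$.
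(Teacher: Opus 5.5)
Your proof is correct and follows the paper's argument essentially verbatim. You write the commutator through the convolution kernel, apply the fundamental theorem of calculus to $f_\mu(x-y)-f_\mu(x)$, and use the scaling bound $\int |y|\,|K_\lambda(y)|\,\mathrm{d}y\lesssim\lambda^{-1}$ to get $\lambda^{-1}L(\pa_x f_\mu,g)$, then trade $\pa_x f_\mu$ for $\mu f_\mu$. Your version is, if anything, slightly more careful than the paper's: the paper writes the kernel as $\check\chi$ rather than the annular kernel of $P_\lambda$, and it simply invokes the $L$-notation rule $L(\pa_x P_\mu u,\cdot)=\mu L(u,\cdot)$ without the fattened-projector justification you supply.
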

\begin{proof}
  We compute directly:
  \begin{align*}
    &~ [P_\lambda,f_\mu]g(x) = \lambda^n \int_{\mathbb{R}^n}\check{\chi}(\lambda y)[f_\mu(x-y)-f_\mu(x)]g(x-y)\mathrm{d}y\\
    =&  -\lambda^{n}\int_{0}^1
  \int_{\mathbb{R}^n} y\check{\chi}(\lambda y)\cdot \pa_x f_\mu(x-sy)g(x-y)\mathrm{d}s\mathrm{d}y \\
=&~ -\lambda^{-1}\int_{0}^1
  \int_{\mathbb{R}^n} \lambda^{n+1}y\check{\chi}(\lambda y)\cdot \pa_x f_\mu(x-sy)g(x-y)\mathrm{d}s\mathrm{d}y\\
=&~ \lambda^{-1}L(\pa_x f_\mu, g) = \lambda^{-1}\mu L( f_\mu, g). \end{align*}
The last line follows from the fact that the kernels $\lambda^{n+1}y\check{\chi}(\lambda y)$ are integrable, with uniformly bounded $L^1$ norm. 
\end{proof}

We also have the following commutator estimate for the angular operators $R_\nu$, whose proof can be found in the Appendix.
\begin{lemma}\label{ancom}
  Given Schwartz functions $f,g$ on the sphere $\mathbb{S}^2$, we have 
  \begin{equation}\label{angular}
   \| [R_\nu,f]g \|_{L^2_\omega}\lesssim \nu^{-1}\|\pa_\omega f  \|_{L^\infty_\omega}\|  g  \|_{L^2_\omega} .
  \end{equation}
\end{lemma}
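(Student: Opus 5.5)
We will prove Lemma~\ref{ancom} with a kernel representation of $R_\nu$, in the same way as Lemma~\ref{pcom}. Write $R_\nu g(\omega)=\int_{\mathbb{S}^2}K_\nu(\omega,\omega')g(\omega')\,d\mu(\omega')$ with $K_\nu(\omega,\omega')=k_\nu(\langle\omega,\omega'\rangle)$, where $k_\nu(z)=\sum_n \varphi(\nu^{-1}n)\mathfrak{Z}_n(z)$ and $\varphi(s)=\chi(s)-\chi(2s)$. Then
\begin{equation*}
[R_\nu,f]g(\omega)=\int_{\mathbb{S}^2}K_\nu(\omega,\omega')\bigl(f(\omega')-f(\omega)\bigr)g(\omega')\,d\mu(\omega').
\end{equation*}
The mean value theorem along a great circle gives $|f(\omega')-f(\omega)|\le \|\pa_\omega f\|_{L^\infty_\omega}\, d(\omega,\omega')$, where $d$ is geodesic distance. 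By Schur's test it therefore suffices to prove
\begin{equation*}
\sup_\omega\int_{\mathbb{S}^2}|K_\nu(\omega,\omega')|\,d(\omega,\omega')\,d\mu(\omega')\lesssim \nu^{-1}.
\end{equation*}
Since $K_\nu$ is symmetric and rotation invariant, the same bound then holds with the roles of $\omega$ and $\omega'$ exchanged. This is the spherical analogue of the fact that $\lambda^{n+1}y\check\chi(\lambda y)$ has bounded $L^1$ norm.

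The key step is a pointwise kernel estimate: $|K_\nu(\omega,\omega')|\lesssim_N \nu^2(1+\nu\, d(\omega,\omega'))^{-N}$ for every $N$. Once we have this, integrating in polar coordinates around $\omega$ (area element $\sim \sin\theta\,d\theta$) gives
\begin{equation*}
\int_0^\pi \nu^2(1+\nu\theta)^{-N}\,\theta\cdot\theta\,d\theta\lesssim \nu^{-1},
\end{equation*}
which is what we want. The kernel bound is standard for smooth spectral multipliers of the Laplacian on compact manifolds; see Guo's appendix for the properties of $R_\nu$. We will take that route, and if a self-contained argument is needed we will prove it in one of two ways. The first uses the Legendre/Zonal structure: apply summation by parts $N$ times in $n$, using the Christoffel--Darboux type identity $(1-z)\sum_{n\le m}(2n+1)L_n(z)=(m+1)(L_m(z)-L_{m+1}(z))$ and the bound $|L_n(\cos\theta)|\lesssim (n\sin\theta)^{-1/2}$. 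Each differencing of $\varphi(\nu^{-1}n)$ gains a factor $\nu^{-1}$, and each factor $(1-z)^{-1}\sim\theta^{-2}$ combines with these gains. The second route writes $\varphi(\nu^{-1}\sqrt{-\Delta_{\mathbb{S}^2}+1/4}-\tfrac12)$, using $n+\tfrac12=\sqrt{\lambda_n+1/4}$, and applies finite propagation speed for the half-wave group $e^{it\sqrt{-\Delta+1/4}}$, as in Cheeger--Gromov--Taylor.

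We expect the main obstacle to be this kernel decay, specifically near the antipodal point $\omega'=-\omega$. There $L_n(-1)=(-1)^n$, so the kernel need not be small in the naive sense. The summation-by-parts argument must then be run with $1+z$ in place of $1-z$, using the parity $L_n(-z)=(-1)^nL_n(z)$. Near the antipode the weight $d(\omega,\omega')$ is only bounded by $\pi$, so we need the kernel to be integrable with mass $\lesssim \nu^{-N}$ there. This follows from the same argument, because differencing the alternating sequence $(-1)^n\varphi(\nu^{-1}n)$ again gains powers of $\nu^{-1}$ once it is paired against the factor $(1+z)^{-1}$.
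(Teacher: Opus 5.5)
Your reduction is the same as the paper's. You write $[R_\nu,f]g$ as an integral of $\bigl(f(\omega')-f(\omega)\bigr)g(\omega')$ against the zonal kernel $\sum_n\varphi(\nu^{-1}n)\mathfrak{Z}_n(\langle\omega,\omega'\rangle)$, apply the mean value theorem along the great circle, and reduce by Schur's test to $\sup_\omega\int|K_\nu(\omega,\omega')|\,d(\omega,\omega')\,d\mu(\omega')\lesssim\nu^{-1}$.

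The two arguments differ in how this weighted $L^1$ bound is proved. The paper never proves pointwise decay. It applies $(2n+1)L_n=(L_{n+1}-L_{n-1})'$ twice, together with the Jacobi differentiation formula, to rewrite the kernel as $\frac{1}{8\pi}\sum_n D_nP^{(2,2)}_n$ with second differences $D_n=O(\nu^{-1})$. It then inserts Szeg\H{o}'s asymptotics for $P^{(2,2)}_n(\cos\theta)$, performs one Abel summation against the explicit cosine sums $C_n(\theta)$, and integrates the resulting bounds against $\theta\sin\theta\,d\theta$, using $|P^{(2,2)}_n|\lesssim n^2$ for $\theta\le c/\nu$.

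You instead use the needlet-type localization $|K_\nu|\lesssim_N\nu^2(1+\nu\theta)^{-N}$ on all of $[0,\pi]$. This is standard on spheres (Narcowich--Petrushev--Ward), and your finite-propagation-speed route proves it. The part of $\widehat{m}$ with $|t|<\epsilon$ gives a kernel supported in $d\le\epsilon$, with decay in $\nu d$ from a dyadic decomposition in $t$. The remainder has multiplier $O(\nu^{-N}(1+s)^{-N})$ and hence an $O(\nu^{-N})$ kernel.

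The paper's computation is explicit and needs only an averaged bound and two differencings. Yours relies on a black box, but it is stronger and is robust exactly near the antipode. There the paper's own bounds are too weak: $P^{(2,2)}_n$ has size $n^2$ at both endpoints, while the weight $\theta$ does not vanish at $\theta=\pi$. Consequently the error term $I_2$ on $[\pi/2,\pi-c/\nu]$, and the bound $|P^{(2,2)}_n|\lesssim n^2$ on $[\pi-c/\nu,\pi]$ (which the appendix does not write out), each give only $O(1)$ against $\theta\sin\theta\,d\theta$. The required smallness comes from parity cancellation, which your pointwise bound captures.

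One correction to your self-contained summation-by-parts route: the antipodal mechanism you describe fails. Since $(-1)^{n+1}a_{n+1}-(-1)^na_n=(-1)^{n+1}(a_{n+1}+a_n)$, differencing $(-1)^n\varphi(\nu^{-1}n)$ gains nothing. No parity trick is needed, because near $z=-1$ the factor $(1-z)^{-1}$ is harmless; keep the smooth coefficients $\varphi(\nu^{-1}n)$ and your $(1-z)$ identity.

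What is actually missing is a way to iterate, which the single Legendre identity does not give. Use instead $\sum_{n\le m}(2n+1)L_n=(m+1)P^{(1,0)}_m$ and, more generally, the fact that the partial sums of $\frac{(2n+k+1)(n+k)!}{n!}P^{(k,0)}_n$ equal $\frac{(m+k+1)!}{m!}P^{(k+1,0)}_m$. After $k$ summations by parts the kernel is bounded by $\nu^{2-k}\sup_{m\sim\nu}|P^{(k,0)}_m(\cos\theta)|$. You have $|P^{(k,0)}_m(\cos\theta)|\lesssim m^{-1/2}\theta^{-k-1/2}$ on $[m^{-1},\pi/2]$ and $|P^{(k,0)}_m|\lesssim 1$ on $[\pi/2,\pi]$. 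This gives $\nu^2(\nu\theta)^{-k-1/2}$ off the diagonal and $O(\nu^{2-k})$ near the antipode.
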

Using the above lemma, we have 
\begin{proposition}\label{2.5}
 \begin{equation}
   \|\pa_x R_\nu f\|_{L^2_{\omega}}\lesssim \|\pa_x f\|_{L^2_\omega}.
 \end{equation}
\end{proposition}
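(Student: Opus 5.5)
We need to bound $\|\pa_x R_\nu f\|_{L^2_\omega}$ on a fixed sphere $\{|x|=r\}$ by $\|\pa_x f\|_{L^2_\omega}$. The plan is to split $\pa_i$ into its radial and angular parts, $\pa_i=\omega_i\pa_r+r^{-1}\omega^j\Omega_{ji}$, and to use that $R_\nu$ commutes with $\pa_r$ and with the rotation vector fields. Pointwise, $|\pa_x g|^2=|\pa_r g|^2+r^{-2}|\pa_\omega g|^2$, and this orthogonal splitting also holds after integrating over the sphere. So it suffices to show, uniformly in $\nu$,
\[
\|\pa_r R_\nu f\|_{L^2_\omega}\lesssim \|\pa_r f\|_{L^2_\omega},\qquad
\|\pa_\omega R_\nu f\|_{L^2_\omega}\lesssim \|\pa_\omega f\|_{L^2_\omega}.
\]

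The radial piece is immediate. Since $[\pa_r,R_\nu]=0$, we have $\pa_r R_\nu f=R_\nu\pa_r f$, and $R_\nu$ is bounded on $L^2_\omega$.

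For the angular piece I will use the rotation fields $\Omega_{ij}$, which are tangent to the sphere and commute with $\Delta_{\mathbb{S}^2}$. They therefore preserve each eigenspace and commute with $\Pi_n$, hence with $R_\nu$. This gives $\Omega_{ij}R_\nu f=R_\nu\Omega_{ij}f$. Since $|\pa_\omega g|^2\sim\sum_{i<j}|\Omega_{ij}g|^2$ pointwise on the sphere, the $L^2_\omega$ boundedness of $R_\nu$ yields $\|\pa_\omega R_\nu f\|_{L^2_\omega}\lesssim\|\pa_\omega f\|_{L^2_\omega}$.

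The one point needing care is that $\pa_x R_\nu f$ written in Cartesian components is not $R_\nu\pa_x f$: the coefficients $\omega_i$ do not commute with $R_\nu$. The decomposition above avoids this. If one insists on working componentwise, Lemma \ref{ancom} gives $\|[R_\nu,\omega_i]g\|_{L^2_\omega}\lesssim\nu^{-1}\|g\|_{L^2_\omega}$ with $g=\pa_r f$ or $g=r^{-1}\Omega f$. That yields the same bound, with an error term dominated by the main terms.
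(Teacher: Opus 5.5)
Your proof is correct, and your main argument takes a different route from the paper's; the paper's route is essentially the componentwise alternative in your last paragraph. The paper writes $\pa_{x_i}R_\nu f=R_\nu\pa_{x_i}f+[\pa_{x_i},R_\nu]f$ and expands $\pa_{x_i}=\omega_i\pa_r+r^{-1}\omega^j\Omega_{ji}$. Since $\pa_r$ and $\Omega_{ji}$ commute with $R_\nu$, the commutator reduces to $[R_\nu,\omega_i]\pa_r f+r^{-1}[R_\nu,\omega^j]\Omega_{ji}f$. It then applies Lemma \ref{ancom} with $\|\pa_\omega\omega\|_{L^\infty_\omega}\lesssim 1$ to bound this by $\nu^{-1}\|\pa_x f\|_{L^2_\omega}$. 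Your main argument instead uses that $|\pa_x g|^2=|\pa_r g|^2+r^{-2}\sum_{i<j}|\Omega_{ij}g|^2$ does not depend on the frame; the angular part is an exact identity, $\sum_{i<j}|\Omega_{ij}g|^2=|\pa_\omega g|^2$. In the frame $\{\pa_r,\Omega_{ij}\}$ every derivative genuinely commutes with $R_\nu$, so only the $L^2_\omega$-boundedness of $R_\nu$ is needed, and the implicit constant is just its operator norm. Your route is therefore more elementary: it bypasses Lemma \ref{ancom} and the Jacobi-polynomial kernel analysis in the appendix. Working with the rotation fields also makes precise the paper's somewhat loose assertion $[\pa_\omega,R_\nu]=0$. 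The paper's route proves more than the proposition, namely the commutator bound $\|[\pa_{x_i},R_\nu]f\|_{L^2_\omega}\lesssim\nu^{-1}\|\pa_x f\|_{L^2_\omega}$: Cartesian derivatives almost commute with $R_\nu$, with a gain of $\nu^{-1}$. The proposition itself does not need this. Since Lemma \ref{ancom} is needed anyway for the commutator terms in Proposition \ref{null}, reusing it here costs the paper nothing.
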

\begin{proof}
  It is clear that
  \begin{align*}
    &~ \|\pa_{x_i} R_\nu f\|_{L^2_{\omega}} \lesssim \|[\pa_{x_i}, R_\nu] f\|_{L^2_{\omega}} + \|R_\nu (\pa_{x_i} f)\|_{L^2_{\omega}}\\
    \lesssim&~ \|[R_\nu,\omega_i \pa_r + r^{-1}\omega^j\Omega_{ji}] f\|_{L^2_{\omega}} + \|\pa_{x_i} f\|_{L^2_{\omega}}\\
    \lesssim&~ (\nu^{-1}\|\pa_\omega \omega \|_{L^\infty_\omega} +1)\|\pa_x f\|_{L^2_\omega}\\
    \lesssim&~ \|\pa_x f\|_{L^2_\omega},
   \end{align*}
where we used Lemma \ref{ancom} and the fact that the rotation generators $\Omega_{ji}$ commute with $R_\nu$.
\end{proof}
\subsection{Hardy type inequalities}
The proof of the following Hardy inequality can be found in \cite[Theorem 2.57]{bahouri_Fourier_2011}.
\begin{lemma} For $0\le s<\frac{3}{2}$,
 we have
 \begin{equation}
   \int_{\mathbb{R}^3}\frac{|f(x)|^2}{|x|^{2s}}\mathrm{d}x\lesssim\| f \|_{\dot{H}^s}^2.
 \end{equation}
\end{lemma}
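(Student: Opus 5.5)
This Hardy inequality can be obtained from the fractional Sobolev embedding combined with a rearrangement/weighted argument. For completeness, here is a short self-contained route that treats the integer and fractional cases uniformly.

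\textbf{Step 1: reduction to a dyadic estimate.} By density it suffices to take $f$ Schwartz. We decompose $f=\sum_{\lambda} f_\lambda$ with homogeneous Littlewood--Paley pieces, $\lambda\in 2^{\mathbb{Z}}$. We also decompose space into dyadic shells $A_k=\{2^{k}\le |x|<2^{k+1}\}$, so that
$$\int\frac{|f|^2}{|x|^{2s}}\,\mathrm{d}x\sim\sum_k 2^{-2sk}\|f\|_{L^2(A_k)}^2 .$$

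\textbf{Step 2: two bounds for each piece on each shell.} On a shell of radius $2^k$ there are two competing bounds for $f_\lambda$. The first is the trivial bound $\|f_\lambda\|_{L^2(A_k)}\le\|f_\lambda\|_{L^2}$, to be used when $\lambda 2^k\gtrsim 1$. The second is Bernstein combined with H\"older, $\|f_\lambda\|_{L^2(A_k)}\lesssim 2^{3k/2}\|f_\lambda\|_{L^\infty}\lesssim (2^k\lambda)^{3/2}\|f_\lambda\|_{L^2}$, to be used when $\lambda 2^k\lesssim 1$. Write $c_\lambda:=\lambda^s\|f_\lambda\|_{L^2}$, so that $\sum_\lambda c_\lambda^2\sim\|f\|_{\dot H^s}^2$. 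Combining the two bounds gives
$$2^{-sk}\|f\|_{L^2(A_k)}\lesssim\sum_\lambda \min\{(2^k\lambda)^{-s},(2^k\lambda)^{3/2-s}\}\,c_\lambda .$$

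\textbf{Step 3: Schur's test.} The kernel $K(k,\lambda)=\min\{t^{-s},t^{3/2-s}\}$ with $t=2^k\lambda$ depends only on the product $2^k\lambda$. It is summable in $\log_2 t\in\mathbb{Z}$ exactly when $s>0$ and $s<3/2$, so Schur's test gives the $\ell^2$ bound $\sum_k(\cdots)^2\lesssim\sum_\lambda c_\lambda^2$, which is the inequality. The main obstacle is the endpoint $s=0$, where the first bound produces no decay; but the case $s=0$ is trivial, since both sides are then $\|f\|_{L^2}^2$. The restriction $s<3/2$ is used precisely in the low-frequency summation, where the kernel $t^{3/2-s}$ must decay as $t\to0$.
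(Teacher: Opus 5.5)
Your proof is correct. The paper gives no argument for this lemma; it only quotes \cite[Theorem 2.57]{bahouri_Fourier_2011}, so what you wrote is a self-contained replacement for a citation rather than a variant of an in-paper proof.

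Your route is the standard dyadic space--frequency argument. You measure the homogeneous blocks $f_\lambda$, $\lambda\in 2^{\mathbb{Z}}$, on each shell $A_k$ either by the trivial $L^2$ bound or by Bernstein plus H\"older. This yields the kernel $\min\{t^{-s},t^{\frac32-s}\}$ in the single variable $t=2^k\lambda$, and Young's inequality on $\mathbb{Z}$ (your Schur test) finishes. The only inputs are Bernstein's inequality and $\|f\|_{\dot H^s}^2\sim\sum_\lambda\lambda^{2s}\|f_\lambda\|_{L^2}^2$.

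The argument makes the role of the hypotheses transparent: $s>0$ controls the high-frequency tail $2^k\lambda\gg1$, $s<\frac32$ controls the low-frequency tail $2^k\lambda\ll1$, and $s=0$ is trivial. It is the homogeneous decomposition over $2^{\mathbb{Z}}$, rather than the paper's inhomogeneous $P_\lambda$ with $\lambda\in 2^{\mathbb{N}}$, that makes the estimate scale invariant. The density reduction is routine via Fatou's lemma, for instance using $\dot H^s\hookrightarrow L^{6/(3-2s)}$ to extract an a.e.\ convergent subsequence.

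The citation buys only brevity. Your argument makes the preliminaries self-contained, using the same Littlewood--Paley and Bernstein tools the paper relies on throughout.
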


With the help of the Hardy inequalities, we have the following trace type inequalities
\begin{lemma}
  For $0\le\varepsilon\le 1$, we have
  \begin{equation}\label{2.6}
    \|r^{1-\varepsilon} f_\lambda\|_{L^\infty_r L^2_\omega}\lesssim \lambda^{\frac{1}{2}+\varepsilon} \|f_\lambda\|_{L^2},\ \|r^{1-\varepsilon} f\|_{L^\infty_r L^2_\omega}\lesssim \|f\|_{H^{\frac{1}{2}+\delta_0+\varepsilon}}.
  \end{equation}
\end{lemma}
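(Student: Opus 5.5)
The plan is to first prove the frequency-localized estimate
$$\|r^{1-\varepsilon} f_\lambda\|_{L^\infty_r L^2_\omega}\lesssim \lambda^{\frac12+\varepsilon}\|f_\lambda\|_{L^2},$$
and then obtain the second inequality by summing over dyadic $\lambda$. For fixed $r$ we have $r^{2-2\varepsilon}\|f_\lambda(r\cdot)\|_{L^2_\omega}^2 = r^{-2\varepsilon}\ins r^2|f_\lambda(r\omega)|^2$. We write $F(r):=\ins r^2 |f_\lambda(r\omega)|^2\,\mathrm{d}\omega$ and use the fundamental theorem of calculus in $r$ from infinity, which is legitimate since $F\to 0$ for Schwartz-type decay. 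This gives
$$r^{-2\varepsilon}F(r)\le -\int_r^\infty \pa_s\big(s^{-2\varepsilon}F(s)\big)\,\mathrm{d}s+\dots$$
More cleanly, we split by comparing $r$ with $\lambda^{-1}$.

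\emph{Case $r\lambda\gtrsim 1$.} Differentiate $G(r)=\ins |f_\lambda(r\omega)|^2\mathrm d\omega$ (without the weight $r^2$). Then
$$G(r)=-\int_r^\infty 2\ins f_\lambda\pa_s f_\lambda\,\mathrm{d}\omega\,\mathrm{d}s\le 2\int_r^\infty \ins |f_\lambda||\pa_x f_\lambda|\,\frac{s^2}{r^2}\,\mathrm d\omega\,\mathrm ds\le 2r^{-2}\|f_\lambda\|_{L^2}\|\pa_x f_\lambda\|_{L^2}\lesssim r^{-2}\lambda\|f_\lambda\|_{L^2}^2 .$$
Hence $r^{2-2\varepsilon}G(r)\lesssim r^{-2\varepsilon}\lambda\|f_\lambda\|_{L^2}^2\lesssim \lambda^{1+2\varepsilon}\|f_\lambda\|_{L^2}^2$, using $r^{-1}\lesssim\lambda$.

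\emph{Case $r\lambda\lesssim 1$.} Use Bernstein in the form $\|f_\lambda\|_{L^\infty}\lesssim \lambda^{3/2}\|f_\lambda\|_{L^2}$. This gives $r^{2-2\varepsilon}G(r)\lesssim r^{2-2\varepsilon}\lambda^3\|f_\lambda\|_{L^2}^2\lesssim \lambda^{1+2\varepsilon}\|f_\lambda\|_{L^2}^2$, since $r^{2-2\varepsilon}\le\lambda^{-2+2\varepsilon}$ for $\varepsilon\le1$. Taking square roots yields the first bound in \eqref{2.6}.

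The Hardy inequality from the preceding lemma offers an alternative route for the first case: bound $\int_r^\infty$ using $\|f_\lambda/|x|^{\varepsilon}\|$-type quantities. The direct argument above suffices, however, and I expect no real obstacle there.

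For the second inequality, write $f=\sum_\lambda f_\lambda$ and use the triangle inequality in $L^\infty_rL^2_\omega$:
$$\|r^{1-\varepsilon}f\|_{L^\infty_rL^2_\omega}\le\sum_\lambda \lambda^{\frac12+\varepsilon}\|f_\lambda\|_{L^2}=\sum_\lambda \lambda^{-\delta_0}\,\lambda^{\frac12+\varepsilon+\delta_0}\|f_\lambda\|_{L^2}.$$
By Cauchy–Schwarz this is $\lesssim\big(\sum_\lambda\lambda^{-2\delta_0}\big)^{1/2}\|f\|_{H^{\frac12+\delta_0+\varepsilon}}\lesssim \|f\|_{H^{\frac12+\delta_0+\varepsilon}}$. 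The only point needing care is the $\lambda=1$ piece, where $P_1$ is a low-frequency cutoff rather than an annulus. The argument above never used an annular support, only Bernstein bounds $\|\pa_x f_1\|_{L^2}\lesssim\|f_1\|_{L^2}$ and $\|f_1\|_{L^\infty}\lesssim\|f_1\|_{L^2}$. These hold for $P_{\le1}$ as well, so that piece is handled the same way.
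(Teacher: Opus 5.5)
Your proof is correct, but it takes a different route from the paper's.

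\textbf{The paper's route.} The paper does not split in $r$. It writes $r^{2-2\varepsilon}\int_{\mathbb{S}^2} f_\lambda^2=-r^{2-2\varepsilon}\int_r^\infty\int_{\mathbb{S}^2}\pa_\rho(f_\lambda^2)$ and uses $(r/\rho)^{2-2\varepsilon}\le 1$ to move the weight inside the integral as $\rho^{-2\varepsilon}$. It then bounds the result by $\|r^{-2\varepsilon}f_\lambda\|_{L^2}\|\pa_r f_\lambda\|_{L^2}$. Hardy's inequality gives $\|r^{-2\varepsilon}f_\lambda\|_{L^2}\lesssim\|f_\lambda\|_{\dot H^{2\varepsilon}}\lesssim\lambda^{2\varepsilon}\|f_\lambda\|_{L^2}$, and Bernstein handles $\pa_r f_\lambda$.

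\textbf{Your route.} You prove only the unweighted trace bound $r^2\int_{\mathbb{S}^2}|f_\lambda|^2\lesssim\lambda\|f_\lambda\|_{L^2}^2$, which is the case $\varepsilon=0$. You pair it with the pointwise bound $\|f_\lambda\|_{L^\infty}\lesssim\lambda^{3/2}\|f_\lambda\|_{L^2}$ and switch between the two at $r\sim\lambda^{-1}$.

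\textbf{What each buys.} The paper's argument is a single computation that is uniform in $r$. Its Hardy step, however, needs $2\varepsilon<\frac32$. For $\varepsilon\ge\frac34$ the weight $|x|^{-4\varepsilon}$ is not locally integrable and $f_\lambda(0)$ need not vanish, so the intermediate quantity is infinite; as written, it covers only $0\le\varepsilon<\frac34$. Your argument uses nothing beyond Bernstein and is valid on the full stated range $0\le\varepsilon\le 1$. This does not affect the rest of the paper, which only uses $\varepsilon\in\{0,\delta_0\}$.

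\textbf{Minor remarks.} The dyadic summation step is the same as the paper's; the implicit constant there depends on $\delta_0$ through $\sum_\lambda\lambda^{-2\delta_0}$. The abandoned opening with $F(r)$ and the trailing dots can simply be deleted.
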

\begin{proof}
  We first estimate the frequency pieces $f_\lambda$: 
\begin{align*}
  &~ r^{2-2\varepsilon} \ins f_\lambda^2 \\
  =& -r^{2-2\varepsilon}\int_{r}^{+\infty}\ins\pa_\rho (f_\lambda^2)\mathrm{d}\rho \mathrm{d}\omega\\
   \le&~ \Big(\frac{r}{\rho}\Big)^{2-2\varepsilon}\int_{r}^{+\infty}\ins \rho^{-2\varepsilon}|f_\lambda||\pa_\rho f_\lambda|\ \rho^2\mathrm{d}\rho \mathrm{d}\omega\\
  \lesssim&~ \|r^{-2\varepsilon}f_\lambda\|_{L^2} \|\pa_r f_\lambda\|_{L^2}
  \lesssim  \lambda^{1+2\varepsilon}\|f_\lambda\|^2_{L^2},
\end{align*}
where we used the Hardy inequality and the Bernstein inequality. After summation over the frequencies $\lambda$, we obtain \eqref{2.6}.
\end{proof}

\section{Balance laws and local energy estimates}

In this section we derive local energy estimates (also known as Morawetz or KSS estimates), which will serve as one of the main tools in our proof. Due to the presence of translations, we require a translated version of the local energy estimates for our solution $v$. Specifically, we aim to establish:
$$\sup_{y_0}\int_{\mathbb{R}\times \mathbb{R}^3} r^{-(1-\delta_0)}\langle r \rangle^{-2\delta_0}[(\pa v^{y_0})^2 + r^{-2} (v^{y_0})^2] \mathrm{d}t \mathrm{d}x\lesssim \int_{\mathbb{R}^3} (\pa v(0))^2 \mathrm{d}x.$$
This estimate holds for free waves, since they have no source terms and the $\dot{H}^1$ norm on the right-hand side is translation invariant. In our case, however, we set $v = P u $, where $P$ is a Littlewood-Paley operator. To derive the estimate in this setting, we must control the source term $N(v)$ with translations, which essentially reduces to controlling
\begin{equation*}
  \| (\pa v^{y_0} + r^{-1}v^{y_0})\cdot [N(v)]^{y_0} \|_{L^1_{t,x}} = \| (\pa v + r^{-1}_{y_1}v)\cdot N(v) \|_{L^1_{t,x}},
\end{equation*}
where $y_1 = -y_0$. We will see that such $L^1_{t,x}$ estimates can be derived by combining the translated local energy estimates with the bilinear $L_{t,x}^2$ estimates provided by the div-curl lemma in Section 4.

\subsection{Balance laws}

We shall study the equation
\begin{equation}\label{model}
  \Box_{\bar{g}}v:=\Box v + \bar{h}^{\alpha\beta}\pa_{\alpha}\pa_{\beta}v = N(v),
\end{equation}
where $v$ is a Schwartz solution and $N(v)$ is the corresponding source term. The coefficients $\bar{h}^{\alpha \beta} = -h^{\alpha\beta\gamma}\pa_\gamma u$ are assumed
to be sufficiently small in $L^\infty$; we will verify this assumption under the bootstrap hypotheses in Section 6. Here and below, $\bar{g}=\bar{h}+I$ denotes the metric, and the subscript in $\Box_{\bar{g}}$ always refers to the metric $\bar{g}$.

We will establish the integrated balance laws for $v^{y_0}$. For simplicity, we define the energy and momentum densities as
$$
\begin{aligned}
  &~ e(v) := \frac{1}{2}(|\pa_t v|^2+ |\pa_r v|^2 + r^{-2}|\pa_\omega v|^2  ) ,\ e_{\pm}(v) := \frac{1}{4} (D_\pm v)^2,\\
  &~ e_\omega(v) := \frac{1}{2r^2}|\pa_\omega v|^2 = \frac{1}{2}|\slashed{\pa} v|^2,\ e_{t,r}(v) =  \frac{1}{2}[(\pa_t v)^2+(\pa_r v)^2],\\
  &~ p(v) := \pa_t v \cdot \pa_{r}v = e_{+}(v)-e_{-}(v).
\end{aligned}$$

We first apply a translation to \eqref{model}, namely
\begin{equation}\label{model-t}
  \Box_{\bar{g}^{y_0}}v^{y_0}= N(v)^{y_0},
\end{equation}

We first establish the energy balance law. Multiplying the translated equation \eqref{model-t} by $r^2\pa_t v^{y_0}$ and integrating over the sphere, we obtain 
\begin{equation}
    \begin{aligned}
  &~ \pa_t\ins \big( e(v^{y_0})+\frac{1}{2}(\bar{h}^{00}( \pa_0 v)^2 - \bar{h}^{ij}\pa_i v \pa_j v)^{y_0}\big) r^2\mathrm{d}\omega\\
  -&~ \pa_r\ins \big( p(v^{y_0})-\omega_i(\pa_t v\cdot \bar{h}^{i\beta} \pa_\beta v)^{y_0}\big) r^2\mathrm{d}\omega\\
 =&~ \ins \big( \pa_t v^{y_0}\cdot N(v)^{y_0} + (\pa_\alpha \bar{h}^{\alpha \beta}\pa_t v \pa_\beta v - \frac{1}{2}\pa_t \bar{h}^{\alpha \beta} \pa_\alpha v \pa_\beta v)^{y_0}\big) r^2\mathrm{d}\omega,
   \end{aligned}
\end{equation}

We next establish the momentum balance law. Multiplying by $r^2\pa_r v^{y_0} + rv^{y_0}$ and integrating over the sphere, we obtain 
\begin{equation}
  \begin{aligned}
  &~ \pa_t\ins \big( p(v^{y_0}) +r^{-1}(v\pa_t v)^{y_0} + (\pa_r v^{y_0} + r^{-1}v^{y_0})(\bar{h}^{0\beta}\pa_\beta v)^{y_0}\big) r^2\mathrm{d}\omega \\
  -&~ \pa_r\ins \big( e_{t,r}(v^{y_0}) +\frac{1}{2 r^2}\pa_r\big(r(v^{y_0})^2\big) - \pa_r v^{y_0}\,\omega_i (\bar{h}^{i\beta}\pa_\beta v)^{y_0} + \frac{1}{2}(\bar{h}^{\alpha \beta}\pa_\alpha v \pa_\beta v)^{y_0}\big) r^2\mathrm{d}\omega\\
  +&~ \frac{1}{r^2}\pa_r\ins e_\omega(v^{y_0}) r^4\mathrm{d}\omega + \ins  \pa_i(v\bar{h}^{i\beta}\pa_\beta v)^{y_0}r \mathrm{d}\omega - \ins  [\pa_\alpha,\pa_r]v^{y_0} \cdot (\bar{h}^{\alpha \beta}\pa_\beta v)^{y_0} r^2\mathrm{d}\omega  \\
  =&~ \ins \big( (\pa_r v^{y_0}+r^{-1}v^{y_0})\cdot( N(v)^{y_0} + (\pa_\alpha \bar{h}^{\alpha \beta} \pa_\beta v)^{y_0}) -\frac{\omega^i}{2}(\pa_i \bar{h}^{\alpha \beta}\pa_\alpha v \pa_\beta v)^{y_0}\big) r^2\mathrm{d}\omega,
   \end{aligned}
\end{equation}

Using the dot-product (schematic) notation introduced in Section 2.1, we can rewrite the above balance laws as follows 
\begin{equation}\label{energy}
  \begin{aligned}
  &~ \pa_t\ins \big( e(v^{y_0})+(\bar{h}\cdot \pa v\cdot \pa v)^{y_0}\big) r^2\mathrm{d}\omega\\
  & - \pa_r\ins \big( p(v^{y_0})+(\bar{h}\cdot \pa v\cdot \pa v)^{y_0}\big) r^2\mathrm{d}\omega\\
 =&~ \ins \big( \pa_t v^{y_0}\cdot N(v)^{y_0} + (\pa\bar{h}^{\alpha \beta}\cdot \pa_\alpha v\cdot \pa_\beta v)^{y_0}\big) r^2\mathrm{d}\omega,
   \end{aligned}
\end{equation}
and the momentum balance law
\begin{equation}\label{momentum}
  \begin{aligned}
  &~ \pa_t\ins \big( p(v^{y_0}) + (\bar{h}\cdot \pa v\cdot \pa v)^{y_0}+r^{-1}(1+\bar{h}^{y_0})(v\cdot \pa v)^{y_0}\big) r^2\mathrm{d}\omega \\
 & - \pa_r\ins \big(e_{t,r}(v^{y_0}) +\frac{1}{2 r^2}\pa_r\big(r(v^{y_0})^2\big) + (\bar{h}\cdot \pa v\cdot \pa v)^{y_0} \big) r^2\mathrm{d}\omega\\
  & +\frac{1}{r^2}\pa_r\ins e_\omega(v^{y_0}) r^4\mathrm{d}\omega + \ins \big[\pa_x(\bar{h}\cdot v\cdot \pa_\beta v) + \bar{h}\cdot \pa v \cdot \pa v \big]^{y_0}r\,\mathrm{d}\omega\\
  =&~ \ins \big( (\pa_r v^{y_0}+r^{-1}v^{y_0})\cdot( N(v)^{y_0} + (\pa_\alpha \bar{h}^{\alpha \beta} \pa_\beta v)^{y_0}) + (\pa\bar{h}^{\alpha \beta}\cdot \pa_\alpha v\cdot \pa_\beta v)^{y_0}\big) r^2\mathrm{d}\omega,
   \end{aligned}
\end{equation}

For simplicity, we define the source densities by
\begin{equation}
  \begin{aligned}
  &~ \mathbf{F}_e(v^{y_0})=  \pa_t v^{y_0}\cdot [N(v)]^{y_0} + \bar{Q}_0(v^{y_0}) ,\\
  &~ \mathbf{F}_m(v^{y_0})=(\pa_r v^{y_0}+ r^{-1}v^{y_0})\cdot [N(v)]^{y_0}+\bar{Q}_1(v^{y_0}).
   \end{aligned}
\end{equation}

And we define the translation invariant functional $E(v)$ by
$$E(v) =\int_{\mathbb{R}^3}e(v(0))\mathrm{d}x + \sup_{y_0}\left\| {\mathbf{F}}_e(v^{y_0}) \right\|_{L^1_{t,x}}+ \sup_{y_0}\left\| {\mathbf{F}}_m(v^{y_0}) \right\|_{L^1_{t,x}}. $$
By the smallness of the coefficients and the energy balance law, it is clear that for $T >0$ we have 
  \begin{align}
   &~ \left\| \pa v(T) \right\|^2_{L^2} \lesssim\int_{\mathbb{R}^3}\big(e(v^{y_0})+\frac{1}{2} \bar{h}^{00}| \pa_0 v|^2 - \bar{h}^{ij}\pa_i v \pa_j v\big)\mathrm{d}x \nonumber\\
   \lesssim&~ \int_{\mathbb{R}^3}e(v(0))\mathrm{d}x + \int_{[0,T]\times \mathbb{R}^3}{\mathbf{F}}_e(v)\mathrm{d}x\nonumber\\
   \lesssim&~ E(v ).
   \end{align}
\subsection{Local energy estimates}
In this subsection we establish the local energy estimate for \eqref{model-t} by means of the momentum balance law \eqref{momentum}.

\begin{proposition}\label{Le}
  Assume that coefficients $\bar{h}$ satisfy
 \begin{equation}\label{small_con}
   \|\langle r \rangle^{\delta_0}\bar{h}\|_{L^\infty} \le \varepsilon\ll 1. 
 \end{equation} 
   Let $v$ be a Schwartz solution to \eqref{model}, then for $\delta_0>0$, we have 
  \begin{equation}
  \begin{aligned}
    &~ \sup_{y_0} \|r^{-\frac{1-\delta_0}{2}}\langle r \rangle^{-\delta_0}\pa v^{y_0} \|^2_{L^2_{t,x}}+ \|r^{-\frac{3-\delta_0}{2}}\langle r \rangle^{-\delta_0} v^{y_0} \|^2_{L^2_{t,x}}\\
    =&~ \sup_{y_0}\int_{\mathbb{R}_+ \times \mathbb{R}^3}r^{-(1-\delta_0)}\langle r \rangle^{-2\delta_0} (e(v^{y_0})+ r^{-2}(v^{y_0})^2)\mathrm{d}t\mathrm{d}x \lesssim E(v).
   \end{aligned}
\end{equation}
\end{proposition}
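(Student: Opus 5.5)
We prove the estimate with a Morawetz-type weighted multiplier argument built on the momentum balance law \eqref{momentum}. Fix $y_0$ and set $w:=v^{y_0}$. Take a radial weight
\[
\phi(r) = 1-\langle r\rangle^{-\delta_0}\Big(\frac{r}{\langle r\rangle}\Big)^{-\delta_0}\cdot c \quad\text{or more simply}\quad \phi(r)=\frac{r^{\delta_0}}{(1+r)^{\delta_0}}\cdot (1+r)^{-\delta_0}\ \text{adjusted},
\]
chosen so that $\phi$ is bounded, $\phi'\gtrsim r^{-(1-\delta_0)}\langle r\rangle^{-2\delta_0}$, and $\phi/r\gtrsim r^{-(1-\delta_0)}\langle r\rangle^{-2\delta_0}$. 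For instance, $\phi(r)=\int_0^r s^{-(1-\delta_0)}\langle s\rangle^{-2\delta_0}\,ds$ is bounded since $2\delta_0-\delta_0>0$ at infinity, and it satisfies $\phi(r)\sim r^{\delta_0}$ near $0$.

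Multiply the (spherically integrated) momentum balance law \eqref{momentum} by $\phi(r)$ and integrate in $r\in(0,\infty)$ and $t\in[0,T]$. Integration by parts in $r$ handles the three radial-flux terms as follows.
\begin{enumerate}
\item The term $-\pa_r\int(e_{t,r}+\tfrac{1}{2r^2}\pa_r(rw^2))r^2$ yields $\int\phi'(e_{t,r}(w)+\dots)$ with a good sign.
\item The term $\frac{1}{r^2}\pa_r\int e_\omega r^4$ yields $\int(\phi/r-\phi')\cdot e_\omega\cdot r^2$ after rewriting. Here we need $\phi/r\ge\phi'$ up to a positive fraction, which holds because $\phi$ is concave-like. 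This yields control of $\int \frac{\phi}{r} e_\omega$.
\item The zeroth-order piece $\frac{1}{2r^2}\pa_r(rw^2)$ produces, after a second integration by parts, $-\frac12\int\phi''' w^2$-type terms. With $\phi''\le 0$ near the origin this gives the Hardy term $\int r^{-3+\delta_0}\langle r\rangle^{-2\delta_0}w^2$. If the sign is unfavorable away from the origin, that region is absorbed using the Hardy inequality together with the already controlled $\pa_r w$ term.
\end{enumerate}
Boundary terms at $r=0$ vanish because $\phi(0)=0$ and $w$ is Schwartz. The time-boundary term $\int\phi\,(p(w)+r^{-1}w\pa_tw+\dots)r^2$ at times $0$ and $T$ is bounded by $\sup_t\|\pa w(t)\|_{L^2}^2$, using Hardy's inequality for $r^{-1}w$. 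By the energy bound just proved, this is $\lesssim E(v)$.

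The source side equals $\int\phi\,\mathbf F_m(w)$, which is bounded by $\|\phi\|_\infty\sup_{y_0}\|\mathbf F_m(v^{y_0})\|_{L^1}\le E(v)$.

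It remains to treat the metric error terms, which are all of the forms $\bar h\cdot\pa w\cdot\pa w$ (both in fluxes, where $\phi'$ hits them, and in the bulk term $\int \bar h\pa w\pa w\, r$) and $\pa_x(\bar h w\pa w)\,r$. Using \eqref{small_con}, these are pointwise bounded by $\varepsilon\langle r\rangle^{-\delta_0}r^{-1}(|\pa w|^2+r^{-2}w^2)$ weighted by $\phi$ or $\phi'$. This is $\lesssim\varepsilon r^{-(1-\delta_0)}\langle r\rangle^{-2\delta_0}(\dots)$, since $\phi/r\lesssim r^{-1+\delta_0}\langle r\rangle^{-\delta_0}$ and the extra $\langle r\rangle^{-\delta_0}$ comes from \eqref{small_con}. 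These terms are therefore absorbed into the left side. The $\pa\bar h$ terms have already been placed inside $\bar Q_1$, hence inside $\mathbf F_m$. The divergence term $\pa_x(\bar h w\pa w)$ is integrated by parts in $x$ against $\phi(r)/r$, producing the same absorbable structure.

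The main obstacle I expect is obtaining simultaneous positivity of the angular and Hardy terms with a single bounded weight $\phi$ while matching exactly the weight $r^{-(1-\delta_0)}\langle r\rangle^{-2\delta_0}$, in particular the sign of the $w^2$ term near $r=0$. If that fails, I would combine the momentum multiplier with a small multiple of the energy identity and use Hardy's inequality to close. Finally, we take the supremum over $y_0$ and let $T\to\infty$.
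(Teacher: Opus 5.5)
The step that fails is the absorption of the metric error terms. You fix $y_0$ and work with $w=v^{y_0}$. But $w$ solves $\Box_{\bar g^{y_0}}w=N(v)^{y_0}$, and \eqref{small_con} only gives $|\bar h^{y_0}(x)|=|\bar h(x-y_0)|\le\varepsilon\langle r_{y_0}\rangle^{-\delta_0}$. The extra decay is therefore centered at $y_0$, not at the origin where your weight $\phi(r)$ is centered.

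In the flux terms this does no harm: there $\phi'$ multiplies $(\bar h\cdot\pa v\cdot\pa v)^{y_0}$, and $|\bar h^{y_0}|\le\varepsilon$ suffices. The bulk terms are different. The terms $\int\frac{\phi}{r}(\bar h\cdot\pa v\cdot\pa v)^{y_0}\,dx$ and $\int\pa_x(\phi/r)\,(\bar h\cdot v\cdot\pa v)^{y_0}\,dx$ carry the weight $\phi/r\sim r^{-1}$ for $r\gg1$, because your $\phi$ tends to a positive constant. The left-hand side only controls the weight $\phi'\sim r^{-1-\delta_0}$. On the set $\{r_{y_0}\lesssim 1\}$, where $r\sim|y_0|\gg1$, the error needs $\varepsilon|y_0|^{-1}\int\!\!\int|\pa w|^2$, while your left-hand side supplies only $|y_0|^{-1-\delta_0}$ times the same integral. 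The ratio $\varepsilon|y_0|^{\delta_0}$ is unbounded in $y_0$. So the pointwise bound $\varepsilon\langle r\rangle^{-\delta_0}r^{-1}(\cdots)$ you invoke is false for $y_0\neq0$, and the estimate cannot be closed one translate at a time.

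The missing idea is that all translates must be handled together. This is why the statement carries $\sup_{y_0}$, and why the paper runs its bootstrap on $\sup_{y_0}$ of dyadically localized energies, splitting the error according to whether $r\ge r_{y_1}$ or $r\le r_{y_1}$. In your setting, let $A_y$ denote the left-hand side for $v^{y}$ on $[0,T]$; it is finite uniformly in $y$ for a Schwartz solution. Note that $\phi/r\lesssim r^{-1+\delta_0}\langle r\rangle^{-\delta_0}$, which is a decreasing function of $r$.
\begin{itemize}
\item On $\{r\le r_{y_0}\}$ you can use $\langle r_{y_0}\rangle^{-\delta_0}\le\langle r\rangle^{-\delta_0}$ and absorb into $A_{y_0}$.
\item On $\{r\ge r_{y_0}\}$ the weight is $\lesssim\varepsilon\, r_{y_0}^{-1+\delta_0}\langle r_{y_0}\rangle^{-2\delta_0}$, and $r^{-1}\le r_{y_0}^{-1}$ in the $v\,\pa v$ term. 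After undoing the translation this is bounded by $\varepsilon A_0$, the local energy of the untranslated $v$.
\end{itemize}
Hence the error for the translate $y_0$ is $\lesssim\varepsilon\sup_y A_y$. Take the supremum over $y_0$, absorb, and let $T\to\infty$.

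With this repair, your single-weight argument is a valid and somewhat cleaner alternative to the paper's family $\alpha(r)=r/(r+R)$ with dyadic pigeonholing. For your concave $\phi$, the zeroth-order term produced by the multiplier $r^2\pa_r w+rw$ is $-\frac{\phi''}{2r}w^2$ (a $\phi''$ term, not a $\phi'''$ term). Since $\phi''<0$ everywhere, it is $\gtrsim r^{-3+\delta_0}\langle r\rangle^{-2\delta_0}w^2$ on all of $(0,\infty)$. The angular coefficient $2\phi/r-\phi'$ is $\ge\phi'$. So the sign worry in your last paragraph does not arise.
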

\begin{proof}
 We use a standard bootstrap argument. We make the following bootstrap assumption:
\begin{equation}\label{ba_local}
  \sup_{y_0}\int_{\mathbb{R}_+ \times \{R\le r\le 2R\}} e(v^{y_0})+ R^{-2}(v^{y_0})^2 \mathrm{d}t\mathrm{d}x \le C \max\{R^{1-\frac{\delta_0}{2}},R\}  E(v),
\end{equation}
or equivalently
\begin{equation}
  \sup_{y_1}\int_{\mathbb{R}_+ \times \{R\le r_{y_1}\le 2R\}} e(v)+ R^{-2}v^2 \mathrm{d}t\mathrm{d}x \le C \max\{R^{1-\frac{\delta_0}{2}},R\}  E(v),
\end{equation}
where $C$ is a sufficiently large constant to be chosen. Once the constant is improved, a continuity argument yields the desired bounds.

Multiplying \eqref{momentum} by $\alpha(r) = \frac{r}{r+R}$ and integrating over $(t,r)\in  [0,T]\times [0,+\infty)$, we have
\begin{equation}\label{int}
  \begin{aligned}
    &~ \int_{[0,T]\times \mathbb{R}^3} \alpha'(r)(e_{t,r}(v^{y_0})+(\bar{h}\cdot \pa v\cdot \pa v)^{y_0})-\frac{\alpha''(r)}{2r}(v^{y_0})^2-\Big(\frac{\alpha(r)}{ r^2}\Big)' r^2 e_\omega(v^{y_0}) \ \mathrm{d} t \mathrm{d} x \\
    =&~ -\int_{\mathbb{R}^3}\alpha(r) [p(v^{y_0}) + (\bar{h}\cdot \pa v\cdot \pa v)^{y_0}+r^{-1}(1+\bar{h}^{y_0})(v\cdot \pa v)^{y_0}]\mathrm{d}x\Big|^T_0\\
    &~ \quad +\int_{[0,T]\times \mathbb{R}^3}\alpha(r)\mathbf{F}_m(v)^{y_0}\mathrm{d}t \mathrm{d}x+\int_{[0,T]\times \mathbb{R}^3} \frac{ (\bar{h}\cdot v\cdot \pa v)^{y_0}}{(r+R)^2} + \frac{(\bar{h}\cdot \pa v\cdot \pa v)^{y_0} }{r+R} \mathrm{d}t\mathrm{d}x,
   \end{aligned}
\end{equation}
where
\begin{equation}
  \alpha' = \frac{R}{(r+R)^2},\ -\alpha''(r) = \frac{2R}{(r+R)^3},\ -\Big(\frac{\alpha(r)}{r^2}\Big)' = \frac{2r+R}{r^2(r+R)^2}.
\end{equation}

Since the coefficients $\bar{h}$ are sufficiently small, the left-hand side of \eqref{int} is positive, and we have 
\begin{equation}\label{le1}
  LHS \ge \frac{1}{10} \int^{T}_0\int_{\frac{R}{2}\le r\le R}\frac{e(v^{y_0})}{R} + \frac{(v^{y_0})^2}{R^3}\mathrm{d}t\mathrm{d}x.
\end{equation}

The first and second terms on the right-hand side of \eqref{int} can be bounded using the Hardy inequalities, so it remains to control the last term. 
To this end, we decompose the integral as
\begin{align*}
  &~ \int_{[0,T]\times \mathbb{R}^3} \frac{ (\bar{h}\cdot v\cdot \pa v)^{y_0}}{(r+R)^2} +   \frac{(\bar{h}\cdot \pa v\cdot \pa v)^{y_0} }{r+R}\,\mathrm{d}t\mathrm{d}x\\
  =&~ \int_{[0,T]\times \mathbb{R}^3} \frac{\bar{h}\cdot v\cdot \pa v }{(r_{y_1}+R)^2} +  \frac{\bar{h}\cdot \pa v\cdot \pa v}{r_{y_1}+R} \,\mathrm{d}t\mathrm{d}x\\
 =&~ \sum_{R'\le 1\ \text{dyadic}}\int_{\{R'\le r_{y_1}\le 2R'\}}  \frac{\bar{h}\cdot v\cdot \pa v }{(r_{y_1}+R)^2} +  \frac{\bar{h}\cdot \pa v\cdot \pa v}{r_{y_1}+R} \,\mathrm{d}t\mathrm{d}x\\
  & + \int_{\{r_{y_1}\ge 1\}}  \frac{\bar{h}\cdot v\cdot \pa v }{(r_{y_1}+R)^2} +  \frac{\bar{h}\cdot \pa v\cdot \pa v}{r_{y_1}+R} \,\mathrm{d}t\mathrm{d}x
\end{align*}
where $y_1 = -y_0$. In the region $\{r_{y_1}\le 1\}$, we have 
\begin{align*}
  &~ \sum_{R'\le 1\ \text{dyadic}}\int_{R'\le r_{y_1}\le 2R'}  \frac{1}{(r_{y_1}+R)^2}\bar{h}\cdot v\cdot \pa v\\
  \lesssim&~   C \|\bar{h}\|_{L^\infty} E(v) \sum_{R'\le 1 \ \text{dyadic}} \frac{R'^{2-\frac{\delta_0}{2}} }{(R'+R)^2} +  \frac{R'^{1-\frac{\delta_0}{2}} }{ R'+R }\\ 
  \lesssim&~ C \varepsilon  \max\{R^{-\frac{\delta_0}{2}},1\} E(v), 
\end{align*}
 
and the last line follows by applying the bootstrap assumption \eqref{ba_local}: if $R<1$, then
\begin{align}
   &~\sum_{R'\le 1 \ \text{dyadic}} \frac{R'^{2-\frac{\delta_0}{2}} }{(R'+R)^2} + \frac{R'^{1-\frac{\delta_0}{2}} }{ R'+R }\nonumber\\
   \lesssim&~  \sum_{R'\le R \ \text{dyadic}} R^{-1} R'^{1-\frac{\delta_0}{2}} + \sum_{R'\ge R \ \text{dyadic}}  R'^{-\frac{\delta_0}{2}}
  \lesssim  R^{-\frac{\delta_0}{2}} ,
\end{align}
and if $R\ge 1$, we have 
\begin{align}
      \sum_{R'\le 1 \ \text{dyadic}} \frac{R'^{2-\frac{\delta_0}{2}} }{(R'+R)^2} + \frac{R'^{1-\frac{\delta_0}{2}} }{ R'+R } 
   \lesssim  
   \sum_{R'\le R \ \text{dyadic}} \langle R \rangle^{-1} R'^{1-\frac{\delta_0}{2}}
  \lesssim \langle R \rangle^{-1} R^{1-\frac{\delta_0}{2}}
  \lesssim  1.
\end{align}

In the region $\{r_{y_1}\ge 1\}$, we have 
\begin{align*}
 &~ \int_{\{r_{y_1}\ge 1\}}  \frac{\bar{h}\cdot v\cdot \pa v}{(r_{y_1}+R)^2} + \frac{\bar{h}\cdot \pa v\cdot \pa v}{ r_{y_1}+R }\\
\lesssim &~  \int_{\mathbb{R}_+ \times \mathbb{R}^3}   \langle r_{y_1}\rangle^{-2}|\bar{h}\cdot v\cdot \pa v| +  \langle r_{y_1}\rangle^{-1}|\bar{h}\cdot \pa v\cdot \pa v|\\
 \lesssim&~ \| \langle r\rangle^{\delta_0}\bar{h}\|_{L^\infty}\int_{\mathbb{R}_+ \times \mathbb{R}^3} \langle r_{y_1}\rangle^{-1} \langle r\rangle^{-\delta_0} | \pa v|\cdot (|\pa v| + \langle r_{y_1} \rangle^{-1} |v| )\\
 \lesssim&~ \| \langle r\rangle^{\delta_0}\bar{h}\|_{L^\infty}\int_{\mathbb{R}_+ \times \mathbb{R}^3}  \boldsymbol{1}_{\{r\ge r_{y_1}\}}\langle r_{y_1}\rangle^{-1-\delta_0}| \pa v|\cdot (|\pa v| + \langle r_{y_1} \rangle^{-1} |v| )\\
  & + \| \langle r\rangle^{\delta_0}\bar{h}\|_{L^\infty}\int_{\mathbb{R}_+ \times \mathbb{R}^3} \boldsymbol{1}_{\{r\le r_{y_1}\}}\langle r\rangle^{-1-\delta_0} | \pa v|\cdot (|\pa v| + \langle r_{y_1} \rangle^{-1} |v| ) \\
  \lesssim&~ \| \langle r\rangle^{\delta_0}\bar{h}\|_{L^\infty}\sup_{y_0}\int_{\mathbb{R}_+ \times \mathbb{R}^3}  \langle r\rangle^{-1-\delta_0} |\pa v^{y_0}| \cdot (|\pa v^{y_0}| + r^{-1}|v^{y_0}|)\\
 \lesssim&~ C \varepsilon E(v),
\end{align*}
and the last line follows by applying the bootstrap assumption:
\begin{align}
&~ \int_{\mathbb{R}_+ \times \mathbb{R}^3}  \langle r\rangle^{-1-\delta_0} |\pa v^{y_0}| \cdot (|\pa v^{y_0}| + r^{-1}|v^{y_0}|)\notag\\
=&~ \sum_{R\ge 1\ \text{dyadic}} \int_{\{R\le r\le 2R\}} \cdots +  \int_{r\le 1}  \cdots\notag\\
  \lesssim&~ \Big(\sum_{R\ge 1\ \text{dyadic}} R^{-\delta_0} + 1 \Big)C E(v)
  \lesssim  C E(v).
\end{align}
Therefore
\begin{align*}
  &~ \int_{[0,T]\times \mathbb{R}^3} \frac{ (\bar{h}\cdot v\cdot \pa v)^{y_0}}{(r+R)^2} +   \frac{(\bar{h}\cdot \pa v\cdot \pa v)^{y_0} }{r+R}\,\mathrm{d}t\mathrm{d}x \lesssim   C\varepsilon\max\{R^{-\frac{\delta_0}{2}},1\} E(v).
\end{align*}

Finally, we obtain the following bound for the right-hand side of \eqref{int}
\begin{equation}\label{le2}
  \begin{aligned}
     &~ RHS \lesssim (1+C\varepsilon)\max\{R^{-\frac{\delta_0}{2}},1\} E(v).
   \end{aligned}
\end{equation}
Combining \eqref{le1} and \eqref{le2} and letting $T\to +\infty$, we obtain
\begin{equation}
  \int_{\mathbb{R}_+\times \{R\le r\le 2R\}}e(v^{y_0})+ R^{-2} (v^{y_0})^2 \mathrm{d}t\mathrm{d}x\lesssim (1+C\varepsilon)\max\{R^{1-\frac{\delta_0}{2}},R\}  E(v).
\end{equation} 

The bootstrap argument can be closed by choosing $C$ such that 
\begin{equation}
  (1+C \varepsilon)\ll C\ \iff 1\ll C \ll \frac{1}{   \varepsilon},
\end{equation}
which is possible since $\varepsilon\ll 1$.

Applying dyadic pigeonholing and using the above estimates, we obtain the local energy estimate
\begin{align}
  &~ \int_{\mathbb{R}_+ \times \mathbb{R}^3}r^{-(1-\delta_0)}\langle r \rangle^{-2\delta_0} (e(v^{y_0})+ r^{-2}(v^{y_0})^2)\mathrm{d}t\mathrm{d}x\nonumber \\
  \lesssim&~ \sum_{R \ \text{dyadic}} \int_{\mathbb{R}_+ \times \{R\le r \le 2R \}}r^{-(1-\delta_0)}\langle r \rangle^{-2\delta_0} (e(v^{y_0})+ r^{-2}(v^{y_0})^2)\mathrm{d}t\mathrm{d}x\nonumber \\
  \lesssim&~ \Big(\sum_{R\le 1\ \text{dyadic}} R^{\frac{\delta_0}{2}}+ \sum_{R\ge  1\ \text{dyadic}} R^{-\delta_0}\Big) E(v)
  \lesssim  E(v).
\end{align}
\end{proof}

We remark that the added translations allow us to obtain similar bounds for terms involving the $L$-notation. Below we list the related estimates, which will also be needed in later sections.
\begin{corollary}Given a solution $v$ admitting the above local energy estimates and a dyadic number $\lambda\gg 1$, we have  
  \begin{align} 
    &~ \sup_{y_0} \|r^{-\frac{1-\delta_0}{2}}\langle r \rangle^{-\delta_0}v_\lambda^{y_0} \|_{L^2_{t,x}}\lesssim \lambda^{-1}E(v_\lambda)^{\frac{1}{2}},\label{c1}\\
    &~ \sup_{y_0,y_1}\|r^{-\frac{1-\delta_0}{2}}\langle r \rangle^{-\delta_0}r^{-1}_{y_1} v^{y_0} \|_{L^2_{t,x}}\lesssim E(v)^{\frac{1}{2}},\label{c2}\\
    &~ \sup_{y_0,y_1}\|\langle r\rangle^{-s_2} \langle r_{y_1}\rangle^{-s_1} \pa v^{y_0} \|_{L^2_{t,x}}\lesssim E(v)^{\frac{1}{2}},\ \text{for}\ s_1+s_2\ge  \frac{1+\delta_0}{2}.\label{c3}
   \end{align}
\end{corollary}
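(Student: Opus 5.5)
We derive all three bounds from Proposition \ref{Le}, applied once to $v_\lambda$ for \eqref{c1} and to $v$ for \eqref{c2}--\eqref{c3}. We combine it with pointwise comparisons of the weights and, where a Hardy-type gain is needed, with the term $r^{-2}(v^{y_0})^2$ already controlled there.

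For \eqref{c1}, we write $v_\lambda = \lambda^{-1}L(\pa_x v_\lambda)$ for $\lambda\gg1$, i.e. $v_\lambda^{y_0}=\lambda^{-1}\int K_\lambda(y)(\pa_x v_\lambda)^{y_0+y}\,dy$ with $\|K_\lambda\|_{L^1}\lesssim1$. By Minkowski's inequality,
\[
\|r^{-\frac{1-\delta_0}{2}}\langle r\rangle^{-\delta_0}v_\lambda^{y_0}\|_{L^2_{t,x}}\le \lambda^{-1}\sup_{y}\|r^{-\frac{1-\delta_0}{2}}\langle r\rangle^{-\delta_0}(\pa v_\lambda)^{y}\|_{L^2_{t,x}}.
\]
The right-hand side is at most $\lambda^{-1}E(v_\lambda)^{1/2}$ by Proposition \ref{Le} applied to $v_\lambda$. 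The supremum over translations is exactly what makes this step work.

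For \eqref{c2}, we split space into $\{r_{y_1}\ge r/2\}$ and $\{r_{y_1}< r/2\}$.

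On the first region, $r_{y_1}^{-1}\lesssim r^{-1}$, so the quantity is bounded by the $r^{-\frac{3-\delta_0}{2}}\langle r\rangle^{-\delta_0}v^{y_0}$ term of Proposition \ref{Le}.

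On the second region we have $r\sim |y_1|$, and $x$ lies in a ball of radius about $r/2$ around $y_1$. There we change variables to center at $y_1$, i.e. we write $v^{y_0}=(v^{y_0-y_1})^{y_1}$ up to relabeling. The weight $r^{-\frac{1-\delta_0}{2}}\langle r\rangle^{-\delta_0}$ is then comparable to $|y_1|^{-\frac{1-\delta_0}{2}}\langle y_1\rangle^{-\delta_0}$. In the shifted variable $\rho=r_{y_1}\lesssim|y_1|$, this constant weight is bounded by $\rho^{-\frac{1-\delta_0}{2}}\langle\rho\rangle^{-\delta_0}$, because the function $\rho\mapsto\rho^{-\frac{1-\delta_0}{2}}\langle\rho\rangle^{-\delta_0}$ is decreasing. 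Hence the region is controlled by $\|\rho^{-\frac{3-\delta_0}{2}}\langle\rho\rangle^{-\delta_0}w\|_{L^2}$ with $w$ a translate of $v$, and Proposition \ref{Le} gives \eqref{c2} after taking the supremum over translations.

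For \eqref{c3}, we again split according to whether $r_{y_1}\ge r$ or $r_{y_1}<r$.

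If $r_{y_1}\ge r$, then $\langle r\rangle^{-s_2}\langle r_{y_1}\rangle^{-s_1}\le\langle r\rangle^{-(s_1+s_2)}$ when $s_1\ge0$. We also have $\langle r\rangle^{-(s_1+s_2)}\le\langle r\rangle^{-\frac{1+\delta_0}{2}}\lesssim r^{-\frac{1-\delta_0}{2}}\langle r\rangle^{-\delta_0}$, which holds both for $r\le1$ and for $r\ge1$. So this region is bounded by Proposition \ref{Le}.

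If $r_{y_1}<r$, the weight is $\le\langle r_{y_1}\rangle^{-(s_1+s_2)}$ when $s_2\ge0$. We then apply the same bound after recentering the translation at $y_1$.

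Negative $s_i$ need a small adjustment: $\langle r\rangle\le\langle r_{y_1}\rangle\langle y_1\rangle$ would lose a factor, so the statement presumably intends $s_1,s_2\ge0$, and we assume this.

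The main obstacle is the bookkeeping of translations in \eqref{c2}, namely making sure the recentered weight really is dominated by the weight of Proposition \ref{Le}. The monotonicity of the weight settles this.
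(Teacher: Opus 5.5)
Your proposal is correct and follows essentially the paper's route. For \eqref{c1} you write $v_\lambda=\lambda^{-1}L(\pa_x v_\lambda)$ and use the translation-uniform local energy bound of Proposition \ref{Le}; your Minkowski version is a slightly cleaner form of the paper's bilinear $L$-notation plus Cauchy--Schwarz argument. For \eqref{c2}--\eqref{c3} you split according to whether $r$ or $r_{y_1}$ is larger, use the monotonicity of $\rho\mapsto\rho^{-\frac{1-\delta_0}{2}}\langle\rho\rangle^{-\delta_0}$, and recenter at $y_1$, exactly as the paper does. Your restriction $s_1,s_2\ge0$ in \eqref{c3} is a genuine implicit hypothesis: the bound fails if either exponent is negative, as one sees by sending $y_1\to\infty$. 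It is satisfied in the paper's use of \eqref{c3} in $\mathbf{Err}_{21}$, where $s_1=\delta_0$ and $s_2=\frac{1-\delta_0}{2}$.
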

\begin{proof}
We begin with \eqref{c1}, namely
\begin{equation}
  \sup_{y_0} \|r^{-\frac{1-\delta_0}{2}}\langle r \rangle^{-\delta_0}v_\lambda^{y_0} \|_{L^2_{t,x}}\lesssim \lambda^{-1}E(v_\lambda)^{\frac{1}{2}}.
\end{equation}

Using the $L$-notation, the left-hand side can be rewritten as
\begin{equation}
 \lambda^{-2} \int_{\mathbb{R}_+\times\mathbb{R}^3} L(r^{-(1-\delta_0)}\langle r \rangle^{-2\delta_0},   \pa_x v_\lambda,  \pa_x v_\lambda)\mathrm{d}t \mathrm{d}x
\end{equation}
since $\lambda\gg 1$. By the properties of the $L$-notation and the local energy estimates, we obtain 
\begin{equation}
  \lesssim \lambda^{-2}\sup_{y_\alpha}\int_{\mathbb{R}_+\times\mathbb{R}^3} r^{-(1-\delta_0)}\langle r \rangle^{-2\delta_0}  |\pa_x v_\lambda^{y_0}| |\pa_x v_\lambda^{y_1}| \mathrm{d}t \mathrm{d}x\lesssim\lambda^{-2}  E(v_\lambda).
\end{equation}

For \eqref{c2}, we split the region into $\{r_{y_1}\ge r\}$ and $\{r_{y_1}\le r\}$ as before, and we have
\begin{align*}
  &~ \sup_{y_\alpha}\|r^{-\frac{1-\delta_0}{2}}\langle r \rangle^{-\delta_0}r^{-1}_{y_1} v^{y_0} \|_{L^2_{t,x}}\\
   \le&~ \sup_{y_\alpha}\|\boldsymbol{1}_{\{r_{y_1}\ge r\}}r^{-\frac{3-\delta_0}{2}}\langle r \rangle^{-\delta_0}  v^{y_0} \|_{L^2_{t,x}}   +\sup_{y_\alpha}\|\boldsymbol{1}_{\{r_{y_1}\le r\}}r_{y_1}^{-\frac{3-\delta_0}{2}}\langle r_{y_1} \rangle^{-\delta_0}  v^{y_0} \|_{L^2_{t,x}}\\
  \le&~ \sup_{y_\alpha}\|r^{-\frac{3-\delta_0}{2}}\langle r \rangle^{-\delta_0}v^{y_0} \|_{L^2_{t,x}} + \|r^{-\frac{3-\delta_0}{2}}\langle r \rangle^{-\delta_0}v^{y_0-y_1} \|_{L^2_{t,x}}\\
  \lesssim&~ E(v)^{\frac{1}{2}}.
\end{align*}
Estimate \eqref{c3} follows from similar arguments.
\end{proof}

\section{A new type of div-curl lemma and null form estimates}
% We also need the frequency localized version of local energy estimates.
% \begin{proposition}
%   If the solution $v$ to \eqref{model} is localized in Fourier region $|\xi|\le  \lambda$, then we have 
% \end{proposition}
% \begin{proof}
%   By assumption $e(v)$ is also localized in the Fourier region $|\xi|\le 4\lambda$. Applying the Plancherel theorem, we have 
%   \begin{align*}
%     \int_{\mathbb{R}^3} r^{-2s} e(v) \mathrm{d}x& = C\int_{\mathbb{R}^3}|\xi|^{2s-3} \chi((4\lambda)^{-1}\xi)\widehat{e(v)}(\xi)\mathrm{d}\xi\\
%     & = C\int_{\mathbb{R}^3}|\xi|^{2s-3} \chi((4\lambda)^{-1}\xi)\widehat{e(v)}(\xi)\mathrm{d}\xi
%   \end{align*}
% \end{proof}
In this section we derive a new type of div-curl lemma and use it to obtain the core space-time $L^2_{t,x}$ estimates of this paper.
\subsection{A new type of div-curl lemma}
The detailed proof of the following lemma can be found in \cite{2024Physical,wang_2023}; we repeat it
 here for the convenience of the reader.
\begin{lemma}\label{dc}
	Suppose that
	\begin{equation}
	\left\{
	\begin{aligned}
	&~ f_t^{11} + f_r^{12} =G^1\\
	&~ f_t^{21}-f_r^{22}=G^2
	 \end{aligned}
	\right.
	\end{equation}
	
	\begin{equation}
	\begin{aligned}
	&~ f^ {12} \rightarrow 0,\ r\rightarrow 0.
	 \end{aligned}
	\end{equation}
	Then there holds
	\begin{equation}
	\begin{aligned}
	 &~ \int_{[0,T]\times \mathbb{R}_+} f^{11}f^{22}+f^{12}f^{21}\\
	 \lesssim&~ \Big(  \sup\limits_{0\leq t\leq T}  \|f^{11}\left(t\right)\|_{L^1} + \int_{[0,T]\times \mathbb{R}_+}\lvert G^1\rvert \Big)\\
	 \quad\cdot&~ \Big( \sup\limits_{0\leq t\leq T} \|f^{21}\left(t\right)\|_{L^1}  +\int_{[0,T]\times \mathbb{R}_+}\lvert G^2\rvert \Big)
	 \end{aligned}
	\end{equation}
	provided that the right hand side is bounded.
\end{lemma}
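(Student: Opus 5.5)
The plan is to prove the div-curl lemma with an interaction functional, in the spirit of the Glimm functional and the one-dimensional interaction Morawetz estimates. The first system says $(f^{11},f^{12})$ is a density--flux pair transported essentially to the right, with source $G^1$. The second says $(f^{21},-f^{22})$ is a density--flux pair transported to the left, with source $G^2$. The left-hand side $\int f^{11}f^{22}+f^{12}f^{21}$ should then arise as the time derivative of a bilinear quantity measuring how much of the first density lies to the left of the second.

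We introduce
\begin{equation*}
Q(t):=\int_{0}^{\infty}\int_{0}^{\infty}\boldsymbol{1}_{\{r<s\}}\,f^{11}(t,r)\,f^{21}(t,s)\,\mathrm{d}r\,\mathrm{d}s
=\int_0^\infty f^{11}(t,r)\Big(\int_r^\infty f^{21}(t,s)\,\mathrm{d}s\Big)\mathrm{d}r .
\end{equation*}
This is bounded by $\|f^{11}(t)\|_{L^1}\|f^{21}(t)\|_{L^1}$.

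Next we differentiate in $t$ and substitute the equations, $f^{11}_t=-f^{12}_r+G^1$ and $f^{21}_t=f^{22}_r+G^2$. The source contributions are bounded by
\begin{equation*}
\|G^1(t)\|_{L^1}\|f^{21}(t)\|_{L^1}+\|f^{11}(t)\|_{L^1}\|G^2(t)\|_{L^1}.
\end{equation*}
For the flux terms we integrate by parts in the variable that carries the derivative:
\begin{itemize}
\item $-\int\!\int_{r<s} f^{12}_r(r)f^{21}(s)\,\mathrm{d}r\,\mathrm{d}s$ becomes $-\int_0^\infty f^{12}(s)f^{21}(s)\,\mathrm{d}s$ plus the boundary term $\int_0^\infty f^{12}(0)f^{21}(s)\,\mathrm{d}s$. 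The boundary term vanishes by the hypothesis $f^{12}\to0$ as $r\to0$.
\item $\int\!\int_{r<s} f^{11}(r)f^{22}_s(s)\,\mathrm{d}s\,\mathrm{d}r$ becomes $-\int_0^\infty f^{11}(r)f^{22}(r)\,\mathrm{d}r$. Here the boundary contribution at infinity is dropped by decay.
\end{itemize}
Together this gives
\begin{equation*}
\frac{\mathrm{d}}{\mathrm{d}t}Q=-\int_0^\infty\big(f^{11}f^{22}+f^{12}f^{21}\big)\,\mathrm{d}r+\mathcal{E}(t),
\end{equation*}
where $|\mathcal{E}(t)|$ is bounded by the source terms above. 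Integrating over $[0,T]$ yields
\begin{equation*}
\int_{[0,T]\times\mathbb{R}_+}\big(f^{11}f^{22}+f^{12}f^{21}\big)\le |Q(0)|+|Q(T)|+\int_0^T|\mathcal{E}|.
\end{equation*}
Each term on the right is at most
\begin{equation*}
\Big(\sup_t\|f^{11}\|_{L^1}+\|G^1\|_{L^1_{t,r}}\Big)\Big(\sup_t\|f^{21}\|_{L^1}+\|G^2\|_{L^1_{t,r}}\Big).
\end{equation*}
For the source error we use $\int_0^T\|G^1(t)\|_{L^1}\|f^{21}(t)\|_{L^1}\,\mathrm{d}t\le\|G^1\|_{L^1_{t,r}}\sup_t\|f^{21}\|_{L^1}$, and symmetrically for $G^2$.

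The main obstacle is justifying the boundary terms. The one at $r=0$ is exactly what the hypothesis on $f^{12}$ is for. The ones at $r=\infty$, namely $f^{22}(\infty)\int f^{11}$ and the analogous term, need $f^{22}\to0$ at infinity, which holds in our applications because the densities are built from Schwartz solutions. If needed, one can first truncate to $[0,\rho]$ and let $\rho\to\infty$, using the finiteness of the right-hand side.
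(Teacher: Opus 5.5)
Your proof is correct and is the same argument as the paper's: your functional $Q(t)=\int_0^\infty f^{11}(t,r)\int_r^\infty f^{21}(t,s)\,\mathrm{d}s\,\mathrm{d}r$ equals, by Fubini, the paper's $\int_0^\infty f^{21}\int_0^r f^{11}$. The paper computes its time derivative by integrating the first equation over $[0,r]$ (using $f^{12}\to 0$ at $r=0$), multiplying by $f^{21}$, adding the second equation multiplied by $\int_0^r f^{11}$, and integrating by parts in $r$. It then bounds the resulting terms $\mathcal{A}_1,\mathcal{A}_2,\mathcal{A}_3$ exactly as you bound $Q(0)$, $Q(T)$ and the source error, and it likewise drops without comment the boundary term at $r=\infty$ that you flag.
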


\begin{proof}
 
	We note that 
	\begin{equation}\label{dd1}
	f^{21}\int_{0}^{r} f_t^{11}+f^{12}f^{21}=\int_{0}^{r}G^1f^{21},
	\end{equation}
	
	\begin{equation}\label{dd2}
	f^{21}_t\int_{0}^{r} f^{11}-f_r^{22}\int_{0}^{r}f^{11}= G^2\int_{0}^{r}f^{11}.
	\end{equation}
    
    \eqref{dd1}+\eqref{dd2}:
	\begin{equation}
	\int_{0}^{\infty} \left( f^{21}\int_{0}^{r}f^{11}\right)_t+\int_{0}^{\infty} 
	\left(f^{12}f^{21}-f_r^{22}\int_{0}^{r}f^{11}\right)=\int_{0}^{\infty}\left(f^{21}\int_{0}^{r}G^1 + G^2\int_{0}^{r}f^{11}\right).
	\end{equation}
	
	Integrating the first term in $t$ over $[0,T]$ and integrating by parts the term involving $f_r^{22}$, we obtain
	\begin{equation}
	\begin{aligned}
	&~ \int_{[0,T]\times \mathbb{R}_+} f^{11}f^{22}+f^{12}f^{21}= \int_{0}^{\infty} \left( f^{21}\int_{0}^{r}f^{11}\right)\left(0\right)- \left(f^{21}\int_{0}^{r} f^{11}\right)\left(T\right) \\
	+&~ \int_{[0,T]\times \mathbb{R}_+}\left(f^{21}\int_{0}^{r}G^1 + G^2\int_{0}^{r}f^{11}\right)\\
	:=&~ \mathcal{A}_1+\mathcal{A}_2+\mathcal{A}_3,
	 \end{aligned}
	\end{equation}
where	
\begin{align*}
  &~ \lvert \mathcal{A}_1\rvert \lesssim \|f^{11}\left(0\right)\|_{L^1}\|f^{21}\left(0\right)\|_{L^1} + \|f^{11}\left(T\right)\|_{L^1}\|f^{21}\left(T\right)\|_{L^1},\\
  &~ \lvert \mathcal{A}_2\rvert
  \lesssim  \int_{0}^{T} \|f^{21}\left(t\right)\|_{L^1}\|G^1\left(t\right)\|_{L^1}
  \lesssim  \sup\limits_{0\leq t\leq T} \| f^{21}\left(t\right)\|_{L^1}\left(\int_{[0,T]\times \mathbb{R}_+}\lvert G^1\rvert\right),\\
  &~ \lvert \mathcal{A}_3\rvert
  \lesssim  \int_{0}^{T} \|f^{11}\left(t\right)\|_{L^1}\|G^2\left(t\right)\|_{L^1}
  \lesssim  \sup\limits_{0\leq t\leq T} \| f^{11}\left(t\right)\|_{L^1}\left(\int_{[0,T]\times \mathbb{R}_+}\lvert G^2\rvert\right).
\end{align*}
	Based on the above analysis, we complete the proof. 
\end{proof}

\subsection{Null form estimates}
We will use the above lemma to establish space-time bilinear estimates for frequency-localized solutions $v_\lambda,w_{\mu,\nu}$ to the equations
\begin{align}
 &~ \Box_{\bar{g}^{y_0}} v^{y_0}_\lambda = [N(v_\lambda)]^{y_0},\label{eq-v}\\
 &~ \Box_{\bar{g}} w_{\mu,\nu}   = N(w_{\mu,\nu}).\label{eq-w}
\end{align}
More precisely, we will prove
\begin{proposition}\label{dcnull}
  Under the same assumption in Proposition \ref{Le},  given frequency localized Schwartz solutions $v_\lambda,w_{\mu,\nu}$ satisfying the local energy estimates and sufficiently small number $\delta_0>0$,
  we have
  \begin{equation}\label{4.2RHS}
    \begin{aligned}
    &~ \int_{\mathbb{R}_+\times \{r\ge 1\}} r^{3+\delta_0}  \Big(\ins e_\omega(v^{y_0}_{\lambda})\ins e(w_{\mu,\nu})+\ins e(v^{y_0}_{\lambda})\ins e_\omega(w_{\mu,\nu})+ \ins e_{\pm}(v^{y_0}_{\lambda})\ins e_{\mp}(w_{\mu,\nu})\Big)\mathrm{d}t\mathrm{d}r\\
      \lesssim&~ (\min\{\lambda,\mu\} +  \min\{\lambda,\mu\}^{1-2\delta_0}\nu^{2\delta_0}) E(v_\lambda)E(w_{\mu,\nu}).
     \end{aligned}
  \end{equation} 
\end{proposition}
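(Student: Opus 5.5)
The plan is to apply the div-curl Lemma \ref{dc} with the radial variable $r$ (restricted to $r\ge 1$ after a cutoff) and the spherically integrated energy and momentum densities of the two waves. Concretely, for $v^{y_0}_\lambda$ I take the balance laws \eqref{energy} and \eqref{momentum}, integrated over the sphere and multiplied by a weight $r^{\delta_0}$ (with a smooth cutoff to $\{r\ge 1\}$). This gives a pair $f^{11}=r^{\delta_0}\ins (e(v^{y_0}_\lambda)+\dots)r^2$, $f^{12}=-r^{\delta_0}\ins (p(v^{y_0}_\lambda)+\dots)r^2$. For $w_{\mu,\nu}$ I use the same laws with the opposite sign pattern, namely $f^{21}=r^{\delta_0}\ins (e(w)+\dots)r^2$ and $f^{22}=r^{\delta_0}\ins(p(w)+\dots)r^2$.

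The bilinear quantity produced by the lemma is
\[
f^{11}f^{22}+f^{12}f^{21}\approx r^{4+2\delta_0}\Big(\ins e(v)\ins p(w)-\ins p(v)\ins e(w)\Big).
\]
I will use the momentum law in place of the energy law where needed, and the pair $e\pm p=2e_\pm+e_\omega$. With these, the combination becomes $r^{4+2\delta_0}$ times the sum of the terms $\ins e_\pm(v)\ins e_\mp(w)$, $\ins e_\omega(v)\ins e(w)$ and $\ins e(v)\ins e_\omega(w)$. In practice I will run the lemma twice, once with $(D_+,D_-)$ and once with the roles swapped, and add the results. The extra factor $r$ relative to the target weight $r^{3+\delta_0}$ is traded against the $r^{\delta_0}$ weights; the exact powers are to be fixed so that the target weight comes out to $r^{3+\delta_0}$.

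The right-hand side of Lemma \ref{dc} requires bounds on $\sup_t\|f^{11}\|_{L^1_r}$ and $\|G^1\|_{L^1_{t,r}}$. The first is controlled by $\sup_t\|\langle r\rangle^{\delta_0}\pa v\|_{L^2}^2$, and here the weight is the main obstacle. Energy conservation only gives the unweighted norm, so a weight $r^{\delta_0}$ costs a frequency factor. I would split into $r\lesssim$ (inverse frequency) and larger $r$, or, more simply, use Hardy/trace \eqref{2.6}: the $L^\infty_rL^2_\omega$ trace bound gives $\|r^{1-\varepsilon}f_\lambda\|\lesssim\lambda^{1/2+\varepsilon}$. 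Pairing one factor in the trace norm with the other in $L^2$, and putting the lower-frequency function in the trace norm, should produce the factor $\min\{\lambda,\mu\}$.

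The angular frequency $\nu$ enters through $e_\omega(w_{\mu,\nu})\sim r^{-2}\nu^2|w|^2$. Interpolating, a weight $r^{2\delta_0}$ can be absorbed at the cost of $\nu^{2\delta_0}\min\{\lambda,\mu\}^{-2\delta_0}$, which gives the second term $\min\{\lambda,\mu\}^{1-2\delta_0}\nu^{2\delta_0}$.

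The source terms $G^i$ consist of three pieces:
\begin{enumerate}
\item the terms $\mathbf F_e$ and $\mathbf F_m$, bounded by $E$ by definition;
\item the $\bar h$-quadratic terms $\pa\bar h\cdot\pa v\cdot\pa v$;
\item the terms coming from differentiating the $r^{\delta_0}$ weight and the cutoff, of the form $\delta_0 r^{\delta_0-1}e\,r^2$ and the $e_\omega$ term $r^{-2}\pa_r(r^4\ins e_\omega)$.
\end{enumerate}
The third group has the same sign and integrability as local energy densities. I will control it with Proposition \ref{Le} and \eqref{c2}--\eqref{c3}, using $r^{\delta_0-1}\le r^{-(1-\delta_0)}$ for $r\ge1$. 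The second group is handled using the smallness \eqref{small_con}. Translations cost nothing, since every bound is uniform in $y_0$. Combining these bounds gives \eqref{4.2RHS}.
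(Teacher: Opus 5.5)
There is a genuine gap, and it sits in the choice of multipliers. You leave this open (``the exact powers are to be fixed''), but it decides whether the argument closes at all.

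With $r^{\delta_0}$ on the energy law of $v^{y_0}_\lambda$, the term $\sup_t\|f^{11}\|_{L^1_r}=\sup_t\int r^{\delta_0}e(v^{y_0}_\lambda)\,\mathrm{d}x$ is not bounded by $E(v_\lambda)$ times any power of $\lambda$. Frequency localization (Hardy, Bernstein, \eqref{2.6}) only helps with the singularity of weights at $r=0$. It gives no control of a weight growing at spatial infinity: $\int r^{\delta_0}|\pa v|^2\,\mathrm{d}x$ can be arbitrarily large compared with $\|\pa v\|_{L^2}^2$ for data supported far away. Moreover, a quantity quadratic in $v_\lambda$ alone could never produce $\min\{\lambda,\mu\}$. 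Likewise, the sources from differentiating your weight, $\delta_0 r^{-1+\delta_0}(|p|+e)\,\mathrm{d}x$, are not covered by Proposition \ref{Le}. Its weight $r^{-(1-\delta_0)}\langle r\rangle^{-2\delta_0}$ behaves like $r^{-1-\delta_0}$ at infinity; you dropped the factor $\langle r\rangle^{-2\delta_0}$.

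The paper's choice is asymmetric. It uses the unweighted energy law for $v^{y_0}_\lambda$, so $\|f^{11}\|_{L^1_r}\lesssim\|\pa v_\lambda\|_{L^2}^2$. It uses the momentum law for $w_{\mu,\nu}$ multiplied by $\beta(r)r^{-1+\delta_0}$. Then $f^{21}\sim\beta r^{1+\delta_0}\ins p(w_{\mu,\nu})$ has $L^1_r$ norm $\lesssim\|\pa w_{\mu,\nu}\|_{L^2}^2$. Differentiating this weight only produces $r^{-2+\delta_0}(e+r^{-2}w^2)\,\mathrm{d}x$ on $\{r\ge\frac12\}$ (this is $\mathbf{Err}_1$), which Proposition \ref{Le} does control. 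The weights multiply to exactly $r^2\cdot r^{1+\delta_0}=r^{3+\delta_0}$, and the main term of Lemma \ref{dc} costs no frequency factor at all.

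Second, the resulting form is not a sum of nonnegative terms, as you claim. The term $r^{-2}\pa_r(r^4\ins e_\omega)$ in \eqref{momentum} contains $\pa_r$ of $e_\omega$, so it cannot be placed in $G^2$. After multiplying by $r^{-1+\delta_0}$ it splits into the flux $r^{1+\delta_0}\ins e_\omega$ plus the harmless source $(3-\delta_0)r^{\delta_0}\ins e_\omega$. Hence $f^{22}\sim r^{1+\delta_0}\ins(e(w)-2e_\omega(w))$, and Lemma \ref{dc} bounds the space-time integral of
\[ r^{3+\delta_0}\Big(\ins e(v)\ins e(w)-\ins p(v)\ins p(w)\Big)-2r^{3+\delta_0}\ins e(v)\ins e_\omega(w). \]
The last term has the wrong sign and must be estimated on its own ($\mathbf{Err}_3$). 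This is exactly where $\min\{\lambda,\mu\}^{1-2\delta_0}\nu^{2\delta_0}$ comes from, via your interpolation idea:
\begin{itemize}
\item when $\mu\lesssim\lambda$, one uses $\|r^{\delta_0}\pa_\omega w_{\mu,\nu}\|^2_{L^\infty_rL^2_\omega}\lesssim\mu^{1-2\delta_0}\nu^{2\delta_0}E(w_{\mu,\nu})$;
\item when $\mu\gg\lambda$, one uses $\|r\pa v_\lambda\|^2_{L^\infty_rL^2_\omega}\lesssim\lambda E(v_\lambda)$ together with \eqref{c1}.
\end{itemize}

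Your $D_\pm$ variant does not avoid this. Energy $\pm$ momentum have fluxes $2e_\pm-e_\omega$, not $2e_\pm$. Pairing an incoming law with an outgoing one gives $8\ins e_-(v)\ins e_+(w)-2\ins e_\omega(v)\ins e_\omega(w)$, with no positive $e_\omega(v)e(w)$ term.

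Finally, the factor $\min\{\lambda,\mu\}$ arises only from the genuinely bilinear cross terms moved out of $f^{11}f^{22}+f^{12}f^{21}$ ($\mathbf{Err}_2$). These are the $\bar h$-parts, and the lower-order pieces $r^{-1}w\,\pa w$ and $r^{-2}w^2$ of $f^{21},f^{22}$, multiplied by $r^2\ins e(v)$. That is where your idea of putting the lower-frequency factor into \eqref{2.6} and the other into \eqref{c3} or \eqref{c1} belongs.
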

\begin{proof}
  We recall the  energy balance law for $v^{y_0}_{\lambda}$
\begin{equation}\label{energy-v}
  \begin{aligned}
  &~ \pa_t\ins \big( e(v_\lambda^{y_0})+(\bar{h}\cdot \pa v_\lambda\cdot \pa v_\lambda)^{y_0}\big) r^2\mathrm{d}\omega\\
  &- \pa_r\ins \big( p(v_\lambda^{y_0})+(\bar{h}\cdot \pa v_\lambda\cdot \pa v_\lambda)^{y_0}\big) r^2\mathrm{d}\omega\\
 =&~ \ins  \mathbf{F}_e(v_\lambda^{y_0})\  r^2\mathrm{d}\omega,
   \end{aligned}
\end{equation}
Let $\beta(r)$ be a smooth cutoff to the region $[1,+\infty)$, whose derivative $\beta'(r)$ is supported in $[\frac{1}{2},\frac{3}{2}]$. Multiplying \eqref{momentum} by $\beta(r)r^{-1+\delta_0}$ and integrating by parts, we obtain the weighted momentum balance law for $w_{\mu,\nu}$
\begin{equation}\label{momentum-w}
  \begin{aligned}
    &~ \pa_t\ins \beta(r)r^{ 1+\delta_0} [ p(w_{\mu,\nu})+\bar{h}\cdot \pa w_{\mu,\nu}\cdot \pa w_{\mu,\nu}+r^{-1}(1+\bar{h})\cdot w_{\mu,\nu}\cdot \pa w_{\mu,\nu}]\\
    &-  \pa_r\ins  \beta(r)r^{ 1+\delta_0}[e(w_{\mu,\nu})-2e_\omega(w_{\mu,\nu})+\bar{h}\cdot \pa w_{\mu,\nu}\cdot \pa w_{\mu,\nu}+  r^{-1}(1+\bar{h})\cdot w_{\mu,\nu}\cdot \pa w_{\mu,\nu}+ r^{-2} w_{\mu,\nu}^2] \\
    =&~ \ins \Big[(\beta(r)r^{\delta_0}+\beta'(r)r^{1+\delta_0})\big((1+\bar{h})\cdot \pa w_{\mu,\nu}\cdot \pa w_{\mu,\nu} +r^{-1}(1+\bar{h})\cdot w_{\mu,\nu}\cdot \pa w_{\mu,\nu}+ r^{-2}w_{\mu,\nu}^2\big)\Big]\\
    &  + \ins \beta(r)r^{ 1+\delta_0}\mathbf{F}_m(w_{\mu,\nu}) .
   \end{aligned}
\end{equation}

Since $\ins r^2(p(v_\lambda)+\bar{h}\cdot \pa v_\lambda\cdot \pa v_\lambda)\to 0$ as $r\to 0$, applying Lemma \ref{dc} to \eqref{energy-v} and \eqref{momentum-w}, we have 
\begin{equation}\label{4.2con}
  \begin{aligned}
   &~ \int_{[0,T]\times \mathbb{R}_+}\beta(r) r^{3+\delta_0} \Big(\ins e(v^{y_0}_{\lambda})\ins e(w_{\mu,\nu})-\ins p(v^{y_0}_{\lambda})\ins p(w_{\mu,\nu})\Big)\mathrm{d}t\mathrm{d}r \\
   \lesssim&~ E(v_\lambda)( E(w_{\mu,\nu})+ \mathbf{Err}_1)+ \mathbf{Err}_2+ \mathbf{Err}_3,
   \end{aligned}
\end{equation}
where
\begin{align}
  &~ \mathbf{Err}_1 = \int_{[0,T]\times \mathbb{R}^3}|(1+\bar{h})\cdot \pa w_{\mu,\nu}\cdot \pa w_{\mu,\nu} +r^{-1}(1+\bar{h})\cdot w_{\mu,\nu}\cdot \pa w_{\mu,\nu} +r^{-2}w_{\mu,\nu}^2|\nonumber\\
  &~ \qquad  \qquad \qquad\quad \cdot |\beta(r)r^{-2+ \delta_0}+\beta'(r)r^{-1+\delta_0}| \mathrm{d}t \mathrm{d}x,\\
  &~ \mathbf{Err}_2 =\int_{[0,T]\times \mathbb{R}_+}\Big(\ins(\pa v^{y_0}_{\lambda}\cdot \pa v^{y_0}_{\lambda} )   \ins(\bar{h}\cdot \pa w_{\mu,\nu}\cdot \pa w_{\mu,\nu} )\notag\\
  &\qquad  \qquad \qquad\qquad +\ins (\bar{h}\cdot \pa v_\lambda\cdot \pa v_\lambda)^{y_0}\ins  (\pa w_{\mu,\nu}\cdot \pa w_{\mu,\nu})\notag\\
  &\qquad  \qquad \qquad\qquad +\ins  ((1+\bar{h})\cdot \pa v_\lambda\cdot \pa v_\lambda)^{y_0}\ins (1+\bar{h})\cdot r^{-1}w_{\mu,\nu}\cdot  \pa w_{\mu,\nu}   \notag\\
  &\qquad  \qquad \qquad\qquad +\ins ( (1+\bar{h})\cdot \pa v_\lambda\cdot \pa v_\lambda)^{y_0}\ins  r^{-2}w_{\mu,\nu}^2 \Big) \beta(r) r^{3+\delta_0} \mathrm{d}t \mathrm{d}r\\
  &~ \mathbf{Err}_{3} = \int_{[0,T]\times \mathbb{R}_+} \Big(\ins e(v^{y_0}_{\lambda})\ins e_\omega(w_{\mu,\nu})\Big)  \beta(r)r^{3+\delta_0}\mathrm{d}t \mathrm{d}r.
\end{align}

We note that the left-hand side of \eqref{4.2con} differs from the expression produced by Lemma \ref{dc} by the term $2\mathbf{Err}_3$. Indeed, since the flux $f^{22}$ in \eqref{momentum-w} contains $e(w_{\mu,\nu})-2e_\omega(w_{\mu,\nu})$, Lemma \ref{dc} yields $e(v^{y_0}_\lambda)[e(w_{\mu,\nu})-2e_\omega(w_{\mu,\nu})]-p(v^{y_0}_\lambda)p(w_{\mu,\nu})$, and the difference from the left-hand side of \eqref{4.2con} is exactly $2e(v^{y_0}_\lambda)e_\omega(w_{\mu,\nu})$. This is why the term $\mathbf{Err}_3$ appears on the right-hand side of \eqref{4.2con}; it will be controlled in the last step of the proof.

Direct calculation shows that the left-hand side of \eqref{4.2con} satisfies
\begin{align*}
   LHS =&   \int_{[0,T]\times \mathbb{R}_+} \Big(\ins (e_+(v^{y_0}_{\lambda})+e_-(v^{y_0}_{\lambda})+e_\omega(v^{y_0}_{\lambda}))\ins (e_+(w_{\mu,\nu})+e_-(w_{\mu,\nu})+e_\omega(w_{\mu,\nu}))\\
  &  \qquad\qquad -\ins (e_+(v^{y_0}_{\lambda})-e_-(v^{y_0}_{\lambda}))\ins (e_+(w_{\mu,\nu})-e_-(w_{\mu,\nu}))\Big)\beta(r) r^{3+\delta_0}\mathrm{d}t\mathrm{d}r\\
  =&  \int_{[0,T]\times \mathbb{R}_+} \Big(\ins e_\omega(v^{y_0}_{\lambda})\ins e(w_{\mu,\nu})+\ins e(v^{y_0}_{\lambda})\ins e_\omega(w_{\mu,\nu})+ \\
   &~ \qquad\qquad +2\ins e_{+}(v^{y_0}_{\lambda})\ins e_{-}(w_{\mu,\nu})+2\ins e_{-}(v^{y_0}_{\lambda})\ins e_{+}(w_{\mu,\nu})\Big)\beta(r) r^{3+\delta_0}\mathrm{d}t\mathrm{d}r.
\end{align*}

It remains to control the error terms on the right-hand side. The first error term can be controlled by the local energy estimates, namely
\begin{align}
  &~ \mathbf{Err}_1 \lesssim \int_{[0,T]\times \{r\ge \frac{1}{2}\}} \langle r\rangle^{-2+\delta_0} (e(w_{\mu,\nu})+r^{-2}w^2_{\mu,\nu})\mathrm{d}t \mathrm{d}x \lesssim E(w_{\mu,\nu}).
\end{align}

For the second error term, we have  
\begin{align*}
  &~ \mathbf{Err}_{2} \lesssim\int_{[0,T]\times\{r\ge  \frac{1}{2}\}}\Big(\ins (\bar{h}\cdot \pa v_\lambda\cdot \pa v_\lambda)^{y_0}\ins  |\pa w_{\mu,\nu}|^2  \\
  &\qquad\qquad\qquad\qquad\ + \ins |\pa v^{y_0}_{\lambda}|^2  \ins(\bar{h}\cdot \pa w_{\mu,\nu}\cdot \pa w_{\mu,\nu} )\\
  &\qquad\qquad\qquad\qquad\ + \ins |\pa v^{y_0}_{\lambda}|^2  \ins r^{-1}|w_{\mu,\nu}\cdot \pa w_{\mu,\nu}|   +r^{-2}w_{\mu,\nu}^2 \Big) r^{3+\delta_0} \mathrm{d}t \mathrm{d}r\\
  &~ =: \mathbf{Err}_{21}+ \mathbf{Err}_{22}+ \mathbf{Err}_{23}.
\end{align*}

For $\mathbf{Err}_{21}$ and $\mathbf{Err}_{22}$, it is sufficient to estimate the first term, since the second one can be handled similarly. We expand it as 
\begin{align}
  &~ \mathbf{Err}_{21} \lesssim \|\langle r_{y_0}\rangle^{2\delta_0}\bar{h}^{y_0}\|_{L^\infty}\|r \pa v_\lambda^{y_0}\|_{L^\infty_r L^2_\omega}^2\int_{\mathbb{R}_+ \times \mathbb{R}^3} \langle r_{y_0}\rangle^{-2\delta_0} \langle r\rangle^{-1+\delta_0}|\pa w_{\mu,\nu}|^2 \mathrm{d}t \mathrm{d}x,
\end{align}
or alternatively
\begin{align}
  &~ \mathbf{Err}_{21} \lesssim \|\langle r_{y_0}\rangle^{2\delta_0}\bar{h}^{y_0}\|_{L^\infty}\|r \pa w_{\mu,\nu}\|_{L^\infty_r L^2_\omega}^2\int_{\mathbb{R}_+ \times \mathbb{R}^3} \langle r_{y_0}\rangle^{-2\delta_0} \langle r\rangle^{-1+\delta_0}|\pa v_\lambda^{y_0}|^2 \mathrm{d}t \mathrm{d}x.
\end{align}
Using the smallness assumptions and the trace-type Hardy inequalities \eqref{2.6}, we have
\begin{align*}
  & \| r_{y_0}^{2\delta_0}\bar{h}^{y_0}\|_{L^\infty}=\| r^{2\delta_0}\bar{h}  \|_{L^\infty}
  \lesssim  \|  \bar{h}  \|^{1-2\delta_0}_{L^\infty}\| r\Lambda_\omega^{1+\delta_0}\bar{h}  \|^{2\delta_0}_{L^\infty_r L^2_\omega}
  \lesssim  \|  \bar{h}  \|^{1-2\delta_0}_{L^\infty}\|\Lambda_\omega^{1+\delta_0}\bar{h}  \|^{2\delta_0}_{H^{\frac{1}{2}+2\delta_0}}
  \le  \varepsilon,\\
  & \|r \pa w_{\mu,\nu}\|_{L^\infty_r L^2_\omega}
  \lesssim  \mu^{\frac{1}{2}}\|\pa w_{\mu,\nu}\|_{L^2}
  \le  \mu^{\frac{1}{2}}E(w_{\mu,\nu})^{\frac{1}{2}},\\
  &  \|r \pa v_{\lambda}^{y_0}\|_{L^\infty_r L^2_\omega}
  \lesssim \lambda^{\frac{1}{2}}\|\pa v_{\lambda}^{y_0}\|_{L^2}
  \le  \lambda^{\frac{1}{2}}E( v_{\lambda})^{\frac{1}{2}}.
\end{align*}
The first line combines the $L^\infty$-smallness of $\bar{h}$ with the angular trace-type inequality \eqref{2.6}, and the remaining two lines follow directly from \eqref{2.6}.
Together with \eqref{c3}, we obtain
\begin{equation}
  \mathbf{Err}_{21}\lesssim \min\{\lambda,\mu\} E(w_{\mu,\nu})E(v_\lambda).
\end{equation}
 
For $\mathbf{Err}_{23}$, when $\mu\lesssim \lambda$ we have 
\begin{align*}
  &~ \mathbf{Err}_{23} \lesssim  (\|r^{2\delta_0} w_{\mu,\nu}\|_{L_r^\infty L^2_{\omega}} \|r \pa w_{\mu,\nu}\|_{L_r^\infty L^2_{\omega}}+ \|r^{2\delta_0} w_{\mu,\nu}\|_{L_r^\infty L^2_{\omega}}^2) \int_{\mathbb{R}_+ \times \mathbb{R}^3} \langle r\rangle^{-1-\delta_0}(\pa v_\lambda^{y_0})^2 \mathrm{d}t \mathrm{d}x\\
  \lesssim&~ \mu^{\frac{1}{2}}\|r^{2\delta_0} w_{\mu,\nu}\|_{L_r^\infty L^2_{\omega}}E(w_{\mu,\nu})^{\frac{1}{2}} E(v_\lambda).
\end{align*}
Notice that 
\begin{align*}
 &~ \| r^{2\delta_0} w_{\mu,\nu}  \|^2_{L^\infty_r L^2_\omega} = \ins r^{4\delta_0} w_{\mu}^2\\
 \lesssim&~ \int_{r}^{+\infty}\ins\Big(\frac{r}{\rho}\Big)^{4\delta_0}\rho^{-2+4\delta_0}|w_{\mu,\nu}\pa_r w_{\mu,\nu} |\rho^2\mathrm{d}\rho \mathrm{d}\omega \\
  \lesssim&~ \| r^{-1+4\delta_0}\pa_r w_{\mu,\nu}   \|_{L^2}  \| r^{-1} w_{\mu,\nu}   \|_{L^2} \\
  \lesssim&~ \| \pa_r w_{\mu,\nu} \|_{\dot{H}^{1-4\delta_0}}\| w_{\mu,\nu} \|_{\dot{H}^{1}}
  \lesssim\mu^{1-4\delta_0}E(w_{\mu,\nu}),
\end{align*}
Here we used the Hardy inequality, the Bernstein inequality, and $\|w_{\mu,\nu}\|_{\dot H^1}\lesssim E(w_{\mu,\nu})^{\frac{1}{2}}$.
Therefore 
\begin{equation}
  \mathbf{Err}_{23}\lesssim \mu^{1-2\delta_0}E(w_{\mu,\nu}) E(v_\lambda).
\end{equation}

When $\mu\gg \lambda \gtrsim 1$, we have 
\begin{align*}
  &~ \mathbf{Err}_{23}\\
  \lesssim&~ \|r\pa v^{y_0}_{\lambda}\|_{L^\infty_r L_\omega^2}^2 \int_{\mathbb{R}_+ \times \mathbb{R}^3} \langle r \rangle^{-2+\delta_0}\lambda^{-1}L(\pa_x w_{\mu,\nu},\pa w_{\mu,\nu})   +\langle r \rangle^{-3+\delta_0}\lambda^{-2}L(\pa_x w_{\mu,\nu},\pa_x w_{\mu,\nu})   \mathrm{d}t \mathrm{d}x\\
  \lesssim&~ \lambda E(v_\lambda) E(w_{\mu,\nu}),
\end{align*} 
where we use \eqref{c1}.

Combining the above estimates, we have 
\begin{equation}
  \mathbf{Err}_2 \lesssim \min\{\lambda,\mu\} E(v_\lambda)E(w_{\mu,\nu}).
\end{equation}

For the third error term, we start with the case of $\mu\lesssim \lambda$,
\begin{equation*}
  \mathbf{Err}_3  \lesssim \| r^{1+\delta_0}\slashed{\pa} w_{\mu,\nu}\|^2_{L_r^\infty L_\omega^2} \int_{[0,T]\times \{r\ge \frac{1}{2}\}}\langle r\rangle^{-1-\delta_0} e(v^{y_0}_{\lambda}) \mathrm{d}t \mathrm{d}x\lesssim E(v_\lambda ) \| r^{\delta_0}\pa_\omega w_{\mu,\nu}\|^2_{L_r^\infty L_\omega^2}.
\end{equation*}

It is clear that 
\begin{align*}
  &~ \| r^{\delta_0}\pa_\omega w_{\mu,\nu}\|^2_{L_r^\infty L_\omega^2} \le \int_{r}^{+\infty}\ins\Big(\frac{r}{\rho}\Big)^{2\delta_0} \rho^{-2+2\delta_0}|\pa_\omega w_{\mu,\nu}| |\pa_\omega \pa_r w_{\mu,\nu}|\rho^2\mathrm{d}\rho\mathrm{d}\omega \\
  \le&~ \int_{\mathbb{R}^3} |\slashed{\pa} w_{\mu,\nu}||\pa_\omega \pa_r w_{\mu,\nu}|^{2\delta_0} |\slashed{\pa}\pa_ r w_{\mu,\nu}|^{1-2\delta_0} \mathrm{d}x \\
  \lesssim&~ \mu^{1-2\delta_0}\nu^{2\delta_0} E(w_{\mu,\nu}),
\end{align*}
where we use the Bernstein inequality
$$\|\pa_\omega \pa_r w_{\mu,\nu}\|_{L^2_\omega}\lesssim \nu \|\pa_r w_{\mu,\nu}\|_{L^2_\omega},$$
and thus 
\begin{equation}
  \mathbf{Err}_{3} \lesssim \mu^{1-2\delta_0}\nu^{2\delta_0}E(v_\lambda)E(w_{\mu,\nu}).
\end{equation}

In the case of $\mu\gg \lambda \gtrsim 1$, we have 
\begin{align}
  &~ \mathbf{Err}_3 \lesssim \| r\pa v_{\lambda}\|^2_{L_r^\infty L_\omega^2} \int_{[0,T]\times \{r\ge \frac{1}{2}\}}r^{-1+\delta_0} e_\omega(w_{\mu,\nu}) \mathrm{d}t \mathrm{d}x\notag\\
  \lesssim&~ \lambda E(v_\lambda ) \int_{[0,T]\times \{r\ge \frac{1}{2}\}}r^{-1+\delta_0} e_\omega(w_{\mu,\nu}) \mathrm{d}t \mathrm{d}x.
\end{align}

Applying \eqref{c1}, we have
\begin{align*}
  &~ \int_{[0,T]\times \{r\ge \frac{1}{2}\}} r^{-1+\delta_0} e_\omega(w_{\mu,\nu})  \mathrm{d}t \mathrm{d}x= \int_{[0,T]\times \{r\ge \frac{1}{2}\}}r^{-1+\delta_0} \|\slashed{\pa}w_{\mu,\nu}\|^2_{L^2_\omega}  r^2\mathrm{d}t \mathrm{d}r\\
  \le&~ \int_{[0,T]\times \{r\ge \frac{1}{2}\}}r^{-1-\delta_0} \|\pa_\omega w_{\mu,\nu}\|^{2\delta_0}_{L^2_\omega}\| \pa  w_{\mu,\nu}\|^{2-2\delta_0}_{L^2_\omega}  r^2\mathrm{d}t \mathrm{d}r\\
  \lesssim&~ \nu^{2\delta_0}\Big(\int_{[0,T]\times \{r\ge \frac{1}{2}\}}r^{-1-\delta_0}  |w_{\mu,\nu} |^{2 }\mathrm{d}t\mathrm{d}x \Big)^{\delta_0}\Big(\int_{[0,T]\times \{r\ge \frac{1}{2}\}}r^{-1-\delta_0}  |\pa w_{\mu,\nu} |^{2 }\mathrm{d}t \mathrm{d}x\Big)^{ 1-\delta_0}\\
  \lesssim&~ \mu^{-2\delta_0}\nu^{2\delta_0}E(w_{\mu,\nu})^{1-\delta_0}\sup_{y_0}\Big(\int_{[0,T]\times \{r\ge \frac{1}{2}\}}r^{-1-\delta_0}  |(\pa_x w_{\mu,\nu})^{y_0} |^{2 }\mathrm{d}t\mathrm{d}x \Big)^{\delta_0}\\
  \lesssim&~ \mu^{-2\delta_0}\nu^{2\delta_0}E(w_{\mu,\nu}).
\end{align*}
Here we used the Bernstein inequality $\|\pa_\omega w_{\mu,\nu}\|_{L^2_\omega}\lesssim\nu\|w_{\mu,\nu}\|_{L^2_\omega}$, H\"older's inequality, and \eqref{c1} for the factor containing $|w_{\mu,\nu}|$.
Thus we have
\begin{equation}
  \mathbf{Err}_{3} \lesssim \lambda\mu^{-2\delta_0}\nu^{2\delta_0}E(w_{\mu,\nu}).
\end{equation}

Combining the above estimates for different cases, we have 
\begin{equation}
  \mathbf{Err}_{3} \lesssim \min\{\lambda,\mu\}^{1-2\delta_0}\nu^{2\delta_0}E(v_\lambda)E(w_{\mu,\nu}).
\end{equation}

It is clear that the error terms are bounded by the right-hand side in \eqref{4.2RHS}. Letting $T\to +\infty$ in \eqref{4.2con}, we obtain the desired estimates.
\end{proof}

As a consequence of Propositions \ref{Le} and \ref{dcnull}, we have the following null form estimates.
\begin{proposition}\label{null}
  Under the same assumption in Proposition \ref{Le}, we have the following $L^2_{t,x}$ estimates
 \begin{align}
  &~ \sup_{y_0}\| r^{\frac{1-\delta_0}{2}}\langle r \rangle^{\delta_0} G(w_{\lambda_1,\nu},v^{y_0}_{\lambda_2})  \|_{L^2_{t,x}}\lesssim \nu^{1+\delta_0}\min\{\lambda_1,\lambda_2\}^{\frac{1}{2}+\delta_0}E(  w_{\lambda_1,\nu})^{\frac{1}{2}}E(  v_{\lambda_2})^{\frac{1}{2}}.\label{4.31}
  \end{align} 
In particular, we have the following $L^2_{t,x}$ estimates 
\begin{equation}\label{4.35}
  \begin{aligned}
  &~ \| r^{\frac{1-\delta_0}{2}}\langle r \rangle^{\delta_0} h^{i\alpha\beta }[R_\nu,\pa_\beta u_{\lambda_1,\nu_1} ]\pa_i\pa_{\alpha } u_{\lambda_2,\nu_2} \|_{L^2_{t,x}} \\
   \lesssim&~  \min\{\nu^{-1}\nu_1,1\}\min\{\nu_1,\nu_2\}^{1+\delta_0} \min\{\lambda_1,\lambda_2\}^{\frac{1}{2}+\delta_0} \lambda_2 E( u_{\lambda_1,\nu_1})^{\frac{1}{2}}E(u_{\lambda_2,\nu_2})^{\frac{1}{2}},
   \end{aligned}
\end{equation}
and we may replace the bound on the right-hand side by the interpolated estimate
\begin{equation}\label{4.29}
  \min\{\nu^{-1}\nu_1,1\}\nu_{1}^{1+\delta_0} \min\{\lambda_1,\lambda_2\}^{\frac{1}{2}+\delta_0} \lambda_2 E( u_{\lambda_1,\nu_1})^{\frac{1}{2}}E(u_{\lambda_2,\nu_2})^{\frac{s}{2}}E(u_{\lambda_2})^{\frac{1-s}{2}},\ \text{for}\ 0\le s\le 1.
\end{equation}

 Moreover, given solution $v_1$ admitting the local energy estimates, we have the  following $L^1_{t,x}$ estimates 
 \begin{align}
&  \sup_{y_\alpha}\|(\pa v_1^{y_0}+ r_{y_1}^{-1} v_1^{y_0})   \cdot G(w_{\lambda_1,\nu},v^{y_2}_{\lambda_2})  \|_{L^1_{t,x}} \label{4.33}\\
 &\qquad \lesssim \nu^{1+\delta_0}\min\{\lambda_1,\lambda_2\}^{\frac{1}{2}+\delta_0}E(v_1 )^{\frac{1}{2}}E( w_{\lambda_1,\nu})^{\frac{1}{2}}E(  v_{\lambda_2})^{\frac{1}{2}},\notag\\
& \sup_{y_\alpha}\|(\pa v_1^{y_0}+ r_{y_1}^{-1}v_1^{y_0})\cdot G(w_{\lambda_1,\nu_1},(v_{\lambda_2,\nu_2})^{y_2})  \|_{L^1_{t,x}} \label{4.34}\\
&\qquad\lesssim\min\{\nu_1,\nu_2\}^{1+\delta_0}\min\{\lambda_1,\lambda_2\}^{\frac{1}{2}+\delta_0}E(v_1 )^{\frac{1}{2}}E( w_{\lambda_1,\nu_1})^{\frac{1}{2}}E(  v_{\lambda_2,\nu_2})^{\frac{1}{2}}.\notag
\end{align} 
\end{proposition}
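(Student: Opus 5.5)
The plan is to deduce \eqref{4.31} directly from Proposition \ref{dcnull}, and then obtain \eqref{4.35}, \eqref{4.29}, \eqref{4.33}, \eqref{4.34} by combining \eqref{4.31} with the commutator Lemma \ref{ancom}, the local energy estimates of Proposition \ref{Le}, its corollary, and Cauchy--Schwarz.

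\textbf{Step 1: proof of \eqref{4.31}.} We split into $\{r\ge 1\}$ and $\{r\le 1\}$.

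On $\{r\ge1\}$ we use the expansion
\[
G(w,v)=D_\pm w\cdot D_\mp v+\pa w\cdot\slashed\pa v+\slashed\pa w\cdot\pa v .
\]
Squaring and integrating in $\omega$ at fixed $(t,r)$, we bound $\ins|D_\pm w|^2|D_\mp v|^2$ by $\|D_\pm w_{\lambda_1,\nu}\|_{L^\infty_\omega}^2\ins|D_\mp v|^2$. Sobolev embedding on the sphere together with Bernstein in $\nu$ gives
\[
\|D_\pm w_{\lambda_1,\nu}\|_{L^\infty_\omega}^2\lesssim \nu^{2+2\delta_0}\ins |D_\pm w_{\lambda_1,\nu}|^2 .
\]
The same treatment applies to the angular terms. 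For the term $\pa w\cdot\slashed\pa v$ we put $w$ in $L^\infty_\omega$ and keep $e_\omega(v)$, giving $\nu^{2+2\delta_0}\ins e(w)\ins e_\omega(v)$. For $\slashed\pa w\cdot\pa v$ we get $\nu^{2+2\delta_0}\ins e_\omega(w)\ins e(v)$.

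Multiplying by the weight $r^{1-\delta_0}\langle r\rangle^{2\delta_0}r^2\sim r^{3+\delta_0}$ and integrating in $t,r$, we arrive exactly at the left side of \eqref{4.2RHS}, with the roles of $v,w$ as stated. Proposition \ref{dcnull} then gives a bound
\[
(\min\{\lambda_1,\lambda_2\}+\min\{\lambda_1,\lambda_2\}^{1-2\delta_0}\nu^{2\delta_0})\,E(w_{\lambda_1,\nu})E(v_{\lambda_2})\,\nu^{2+2\delta_0}.
\]
The $\nu^{2\delta_0}$ loss is absorbed after adjusting $\delta_0$. The factor $\min\{\lambda_1,\lambda_2\}^{1+2\delta_0}$ is harmless, since it matches $\min^{\frac12+\delta_0}$ squared.

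On $\{r\le1\}$ the weight is $r^{1-\delta_0}$. We put the lower-frequency factor in $L^\infty$ via Bernstein, $\|\pa f_\lambda\|_{L^\infty}\lesssim\lambda^{3/2}\|\pa f_\lambda\|_{L^2}$. This is too lossy, so instead we use the trace inequality \eqref{2.6}, $\|r\,\pa f_\lambda\|_{L^\infty_rL^2_\omega}\lesssim\lambda^{1/2}E^{1/2}$, in tandem with angular Sobolev for $w$. This gives $\nu^{1+\delta_0}\|r\pa w\|_{L^\infty_rL^2_\omega}$ times $\|r^{-\frac{1+\delta_0}{2}}\pa v\|_{L^2_{t,x}(r\le1)}$. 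The latter is controlled by the local energy estimate, since $r^{-1+\delta_0}\le r^{-1-\delta_0}$ is not quite right. I would instead choose which factor to put in $L^\infty_r$ according to the smaller frequency, so that the power is $\min\{\lambda_1,\lambda_2\}^{1/2}$. The translation $y_0$ is harmless because $E$ is translation invariant.

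\textbf{Step 2: consequences.} For \eqref{4.35}, the null structure survives the commutator. Writing $[R_\nu,f]g$ with the kernel of $R_\nu$ and applying Lemma \ref{ancom} at fixed $r$, the commutator gains $\nu^{-1}\|\pa_\omega \pa u_{\lambda_1,\nu_1}\|_{L^\infty_\omega}\lesssim\nu^{-1}\nu_1\cdot\nu_1^{1+\delta_0}$. Without the gain, each term is bounded trivially by the two products, which gives the factor $\min\{\nu^{-1}\nu_1,1\}$. Since the commutator is not literally a product, I would apply the $L^\infty_\omega\times L^2_\omega$ splitting inside the proof of Step 1 rather than \eqref{4.31} itself. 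The factor $\min\{\nu_1,\nu_2\}^{1+\delta_0}$ comes from putting whichever factor has smaller angular frequency in $L^\infty_\omega$. The $\pa_i$ on $u_{\lambda_2}$ yields the factor $\lambda_2$.

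For \eqref{4.29}, we interpolate between the $\nu_2$-localized bound and the bound summed over $\nu_2$. The latter holds because the angular $L^\infty$ is only placed on $u_{\lambda_1,\nu_1}$.

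For \eqref{4.33} and \eqref{4.34}, Cauchy--Schwarz pairs $r^{-\frac{1-\delta_0}{2}}\langle r\rangle^{-\delta_0}(\pa v_1^{y_0}+r_{y_1}^{-1}v_1^{y_0})$, which is bounded by $E(v_1)^{1/2}$ via Proposition \ref{Le} and \eqref{c2}, with the weighted $G$ factor from \eqref{4.31}. Translations are moved onto the other factors as in the $L$-notation discussion.

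\textbf{Main obstacle.} I expect the hardest parts to be the region $r\le1$, where the null decomposition is unavailable and the weights must be balanced to get only $\min\{\lambda_1,\lambda_2\}^{\frac12+\delta_0}$, and the commutator version \eqref{4.35}, where the algebraic null structure must be verified after the angular projection.
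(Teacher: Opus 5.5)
Your overall route matches the paper's: Proposition \ref{dcnull} on $\{r\ge1\}$, a crude product bound with the trace inequality and the local energy estimate on $\{r\le1\}$, Lemma \ref{ancom} for the commutator, and Cauchy--Schwarz against \eqref{c2} for the $L^1_{t,x}$ bounds. However, the step on $\{r\le1\}$ does not close as written, and the remedy you suggest does not repair it.

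You use \eqref{2.6} with $\varepsilon=0$, i.e.\ $\|r\,\pa w_{\lambda_1,\nu}\|_{L^\infty_rL^2_\omega}\lesssim\lambda_1^{1/2}E(w_{\lambda_1,\nu})^{1/2}$. The leftover factor is then $\|\boldsymbol{1}_{\{r\le1\}}r^{-\frac{1+\delta_0}{2}}\pa v^{y_0}_{\lambda_2}\|_{L^2_{t,x}}$, which needs the weight $r^{-1-\delta_0}$ at the origin. Proposition \ref{Le} only gives $r^{-1+\delta_0}$, and on $\{r\le1\}$ the comparison goes the wrong way. Choosing which factor goes into $L^\infty_r$ according to the smaller frequency only selects $\lambda_1$ versus $\lambda_2$; it does not change the power of $r$.

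The missing idea is to apply \eqref{2.6} with $\varepsilon=\delta_0$. Split $r^{3-\delta_0}=r^{2-2\delta_0}\cdot r^{1+\delta_0}$ in the $\mathrm{d}r$-integral and use $\|R_\nu f\|_{L^\infty_\omega}\lesssim\nu\|f\|_{L^2_\omega}$ on $w$:
\begin{align*}
&\|\boldsymbol{1}_{\{r\le1\}}r^{\frac{1-\delta_0}{2}}\pa w_{\lambda_1,\nu}\cdot\pa v^{y_0}_{\lambda_2}\|_{L^2_{t,x}}\\
&\quad\lesssim\nu\,\|r^{1-\delta_0}\pa w_{\lambda_1,\nu}\|_{L^\infty_rL^2_\omega}\,\|\boldsymbol{1}_{\{r\le1\}}r^{-\frac{1-\delta_0}{2}}\pa v^{y_0}_{\lambda_2}\|_{L^2_{t,x}}\\
&\quad\lesssim\nu\lambda_1^{\frac12+\delta_0}E(w_{\lambda_1,\nu})^{\frac12}E(v_{\lambda_2})^{\frac12}.
\end{align*}
The trace factor absorbs $r^{\delta_0}$ at the price of $\lambda^{\delta_0}$, and what remains is exactly the local energy weight. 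This is where the $+\delta_0$ in $\min\{\lambda_1,\lambda_2\}^{\frac12+\delta_0}$ comes from, so the $\min\{\lambda_1,\lambda_2\}^{\frac12}$ you were aiming for is not reachable this way. Swapping roles (trace on $v^{y_0}_{\lambda_2}$, local energy on $w$, still $L^\infty_\omega$ on $w$) gives the $\lambda_2$ version.

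There is also a smaller bookkeeping issue on $\{r\ge1\}$. You cannot ``adjust $\delta_0$'', since it is fixed by the weights in Proposition \ref{Le} and in the statement. Instead, use $\|R_\nu f\|_{L^\infty_\omega}\lesssim\nu\|f\|_{L^2_\omega}$ rather than the $H^{1+\delta_0}(\mathbb{S}^2)$ embedding. Then $\nu^2$ times the factor $\min\{\lambda_1,\lambda_2\}+\min\{\lambda_1,\lambda_2\}^{1-2\delta_0}\nu^{2\delta_0}\lesssim\nu^{2\delta_0}\min\{\lambda_1,\lambda_2\}$ from Proposition \ref{dcnull} gives exactly $\nu^{1+\delta_0}\min\{\lambda_1,\lambda_2\}^{\frac12}$ after the square root.

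The remaining parts are in line with the paper. For \eqref{4.29}, taking the geometric mean of the $\nu_2$-localized bound and the bound with $E(u_{\lambda_2})$ (obtained via Proposition \ref{2.5}) is an equally valid alternative to the paper's pointwise interpolation. For \eqref{4.34}, the factor of lowest angular frequency must be the untranslated one, since the angular Bernstein inequality is only available for it.
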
 
\begin{proof}
 We start by proving \eqref{4.31}. 
 In the region $\{r\le 1\}$, we have 
  \begin{align}
    &~ \| \boldsymbol{1}_{\{r\le 1\}} r^{\frac{1-\delta_0}{2}} \pa w_{\lambda_1,\nu} \cdot \pa v_{\lambda_2}^{y_0}  \|_{L^2_{t,x}} \lesssim \big\|\| \pa w_{\lambda_1,\nu}\|_{  L^\infty_\omega} \|   \pa v_{\lambda_2}^{y_0}  \|_{L^2_{\omega}} \boldsymbol{1}_{\{r\le 1\}}r^{\frac{3-\delta_0}{2}} \big\|_{L^2_{t,r}}\nonumber\\
    \lesssim&~ \nu  \big\|\|  \pa w_{\lambda_1,\nu}\|_{L^2_\omega} \|   \pa v_{\lambda_2}^{y_0}  \|_{L^2_{\omega}} \boldsymbol{1}_{\{r\le 1\}}r^{\frac{3-\delta_0}{2}} \big\|_{L^2_{t,r}}\nonumber \\
    \lesssim&~ \nu \|r^{1-\delta_0}\pa w_{\lambda_1,\nu}\|_{L^\infty_r L^2_\omega} \|\boldsymbol{1}_{\{r\le 1\}} r^{-\frac{1-\delta_0}{2}}  \pa v_{\lambda_2}^{y_0}\|_{L^2_{t,x}}\nonumber\\
    \lesssim&~ \nu\lambda_1^{\frac{1}{2}+\delta_0} E( w_{\lambda_1,\nu})^{\frac{1}{2}}E(v_{\lambda_2})^{\frac{1}{2}}.
   \end{align}
Switching the roles of $w_{\lambda_1,\nu}$ and $v_{\lambda_2}^{y_0}$ in the last step, we also have
\begin{equation}
  \| \boldsymbol{1}_{\{r\le 1\}} r^{\frac{1-\delta_0}{2}} \pa w_{\lambda_1,\nu} \cdot \pa v_{\lambda_2}^{y_0}  \|_{L^2_{t,x}} \lesssim \nu\lambda_2^{\frac{1}{2}+\delta_0} E( w_{\lambda_1,\nu})^{\frac{1}{2}}E(v_{\lambda_2})^{\frac{1}{2}}.
\end{equation}

  In the region $\{r\ge 1\}$, we 
apply Proposition \ref{dcnull} and obtain
\begin{align}
  &~ \| \boldsymbol{1}_{\{r\ge 1\}} r^{\frac{1+\delta_0}{2}} G( w_{\lambda_1,\nu},v_{\lambda_2}^{y_0})  \|^2_{L^2_{t,x}}\notag\\
  \lesssim&~ \nu^2\int_{\mathbb{R}_+ \times \{r\ge 1\}}\big( \|D_\pm w_{\lambda_1,\nu}\|^2_{L_\omega^2} \|D_{\mp }v_{\lambda_2}^{y_0}\|^2_{L_\omega^2} + \| \pa  w_{\lambda_1,\nu}\|^2_{L_\omega^2} \| \slashed{\pa}v_{\lambda_2}^{y_0}\|^2_{L_\omega^2}\notag\\
  &~ \quad \quad\quad\quad\quad\quad  +\|\slashed{\pa} w_{\lambda_1,\nu}\|^2_{L_\omega^2} \| \pa v_{\lambda_2}^{y_0}\|^2_{L_\omega^2}\big)r^{3+\delta_0}\mathrm{d}t \mathrm{d}r\nonumber\\
  \lesssim&~ \nu^{2+2\delta_0}\min\{\lambda_1,\lambda_2\} E(  w_{\lambda_1,\nu})E(v_{\lambda_2}).
\end{align}
Here we placed the $L^\infty_\omega$ norm on the factor $w_{\lambda_1,\nu}$, using $\|\pa w_{\lambda_1,\nu}\|_{L^\infty_\omega}\lesssim\nu\|\pa w_{\lambda_1,\nu}\|_{L^2_\omega}$.

Combining the above estimates, we complete the proof of \eqref{4.31}. 

The proof of  \eqref{4.35} and \eqref{4.29} is similar.  We introduce the notation
\begin{align*}
  &~ G_{\lambda_1,\nu_1}\cdot \pa_x \pa  u_{\lambda_2,\nu_2} = G(u_{\lambda_1,\nu_1},\pa_x u_{\lambda_2,\nu_2}),\\
 &~ [R_\nu,G_{\lambda_1,\nu_1}]\cdot \pa_x u_{\lambda_2,\nu_2} =  h^{i\alpha\beta }[R_\nu,\pa_\beta u_{\lambda_1,\nu_1} ]\pa_i\pa_{\alpha } u_{\lambda_2,\nu_2}.
\end{align*}
If  $\nu_1\gtrsim \nu$, we discard the commutator structure and obtain
\begin{align}
    &~ \| r^{\frac{1-\delta_0}{2}}\langle r \rangle^{\delta_0} [R_\nu,G_{\lambda_1,\nu_1}]\pa_x \pa  u_{\lambda_{2},\nu_2}\|_{L^2_{t,x}}\nonumber\\
    \lesssim&~ \min\{\nu_1,\nu_2\}\big\|r^{\frac{3-\delta_0}{2}}\langle r \rangle^{\delta_0} \|G_{\lambda_1,\nu_1}\|_{L^2_\omega}\cdot  \|\pa_x \pa  u_{\lambda_{2},\nu_2}\|_{L^2_\omega}\big\|_{L^2_{t,r}}\nonumber\\
    \lesssim&~ \min\{\nu_1,\nu_2\}\lambda_2 \sup_{y_0}\big\|r^{\frac{3-\delta_0}{2}}\langle r \rangle^{\delta_0} \|G_{\lambda_1,\nu_1}\|_{L^2_\omega}\cdot  \| (\pa  u_{\lambda_{2},\nu_2})^{y_0}\|_{L^2_\omega}\big\|_{L^2_{t,r}}
\end{align}
where we use \eqref{re} and always place the $ \| \cdot  \|_{L^\infty_\omega}$ norm on the low angular frequency terms, which gives the factor $\min\{\nu_1,\nu_2\}$. Following the same procedure as in the proof of \eqref{4.31}, we obtain \eqref{4.35} for $\nu_1\gtrsim \nu$. To obtain the interpolated estimate in \eqref{4.29}, we modify the above procedure slightly, namely
\begin{align}
  &~ \| r^{\frac{1-\delta_0}{2}}\langle r \rangle^{\delta_0} [R_\nu,G_{\lambda_1,\nu_1}]\pa_x \pa  u_{\lambda_{2},\nu_2}\|_{L^2_{t,x}}\nonumber\\
  \lesssim&~ \big\|r^{\frac{3-\delta_0}{2}}\langle r \rangle^{\delta_0} \|G_{\lambda_1,\nu_1}\|_{L^\infty_\omega}\cdot  \|\pa_x \pa  u_{\lambda_{2},\nu_2}\|_{L^2_\omega}\big\|_{L^2_{t,r}}\nonumber\\
  \lesssim&~ \nu_1 \big\|r^{\frac{3-\delta_0}{2}}\langle r \rangle^{\delta_0} \|G_{\lambda_1,\nu_1}\|_{L^2_\omega}\cdot \|\pa_x \pa  u_{\lambda_{2},\nu_2}\|_{L^2_\omega}^{s} \|\pa_x \pa  u_{\lambda_{2}}\|_{L^2_\omega}^{1-s}\big\|_{L^2_{t,r}}\nonumber\\
  \lesssim&~ \nu_1 \lambda_2 \sup_{y_0}\big\|r^{\frac{3-\delta_0}{2}}\langle r \rangle^{\delta_0} \|G_{\lambda_1,\nu_1}\|_{L^2_\omega}\cdot  \| (\pa  u_{\lambda_{2},\nu_2})^{y_0}\|_{L^2_\omega}^s\| (\pa  u_{\lambda_{2}})^{y_0}\|^{1-s}_{L^2_\omega}\big\|_{L^2_{t,r}},
\end{align}
where we place the $L^\infty_{\omega}$ norm on $G_{\lambda_1,\nu_1}$ and use Proposition \ref{2.5}. The rest is parallel to \eqref{4.31}.

If $\nu_1\ll \nu$, we apply the commutator estimate \eqref{angular} to gain the additional factor $\nu^{-1}\nu_1$:
\begin{align}
  &~ \|[R_\nu, G_{\lambda_1,\nu_1}]\cdot\pa_x\pa u_{\lambda_2,\nu_2}\|_{L^2_{\omega}} \lesssim \nu^{-1} \|\pa_\omega G_{\lambda_1,\nu_1} \|_{L^\infty_\omega}\cdot\|\pa_x \pa u_{\lambda_2,\nu_2}\|_{L^2_\omega}\notag\\
  \lesssim&~ \nu^{-1}\nu_1^2 \| G_{\lambda_1,\nu_1} \|_{L^2_\omega}\cdot\|\pa_x \pa u_{\lambda_2,\nu_2}\|_{L^2_\omega}\notag\\
   \lesssim&~ \nu^{-1}\nu_1^2 \lambda_2 \sup_{y_0}\| G_{\lambda_1,\nu_1} \|_{L^2_\omega}\cdot\|(\pa u_{\lambda_2,\nu_2})^{y_0}\|_{L^2_\omega},
\end{align}
and we may also replace the bound with
\begin{equation}
  \nu^{-1}\nu_1^2 \lambda_2 \sup_{y_0}\| G_{\lambda_1,\nu_1} \|_{L^2_\omega}\cdot\|(\pa u_{\lambda_2,\nu_2})^{y_0}\|_{L^2_\omega}^{s}\|(\pa u_{\lambda_2})^{y_0}\|_{L^2_\omega}^{1-s}
\end{equation}
and the conclusion follows from the procedure of proving \eqref{4.31}.

Notice that by \eqref{c2} we have 
\begin{equation}
  \sup_{y_\alpha}\|r^{-\frac{1-\delta_0}{2}}\langle r \rangle^{-\delta_0}(\pa v_1^{y_0}+ r_{y_1}^{-1} v_1^{y_0}) \|_{L^2_{t,x}}\lesssim E(v_1)^{\frac{1}{2}},
\end{equation}
together with \eqref{4.31}, this yields \eqref{4.33}.

For \eqref{4.34}, since $L_{t,x}^1$ is translation invariant, we may assume that the factor with the lowest angular frequency has zero translation. For instance, if $\nu_2\le\nu_1$, then we have
\begin{align*}
  &~ \sup_{y_\alpha}\|(\pa v_1^{y_0}+ r_{y_1}^{-1}v_1^{y_0})\cdot G(w_{\lambda_1,\nu_1},(v_{\lambda_2,\nu_2})^{y_2})  \|_{L^1_{t,x}} \\
  =&~ \sup_{y_\alpha}\|(\pa v_1^{y_0}+ r_{y_1}^{-1}v_1^{y_0})\cdot G((w_{\lambda_1,\nu_1})^{y_2},v_{\lambda_2,\nu_2})  \|_{L^1_{t,x}}\\
  \lesssim&~ \sup_{y_\alpha}\|r^{-\frac{1-\delta_0}{2}}\langle r \rangle^{-\delta_0}(\pa v_1^{y_0}+ r_{y_1}^{-1} v_1^{y_0}) \|_{L^2_{t,x}}\cdot \|r^{\frac{1-\delta_0}{2}}\langle r \rangle^{\delta_0}G((w_{\lambda_1,\nu_1})^{y_2},v_{\lambda_2,\nu_2})  \|_{L^2_{t,x}}\\
  \lesssim&~ \nu_{2}^{1+\delta_0}\min\{\lambda_1,\lambda_2\}^{\frac{1}{2}+\delta_0}E(v_1 )^{\frac{1}{2}}E( w_{\lambda_1,\nu_1})^{\frac{1}{2}}E(  v_{\lambda_2,\nu_2})^{\frac{1}{2}},
\end{align*}
and a similar argument applies when $\nu_1\le\nu_2$.

\end{proof}

\section{Frequency envelope formulation of bootstrap arguments}
In what follows we use the notion of frequency envelope, introduced by Tao \cite{tao_Global_2001}, which will be useful for tracking the evolution of
the energy of solutions between dyadic energy shells. A sequence $\{c_\lambda\}_{\lambda}\in l^2$ is said to be a frequency envelope if 
\begin{enumerate}
  \item It is slowly varying, namely
  $$\Big(\frac{\lambda}{\mu}\Big)^{\varepsilon}\lesssim \frac{c_\mu}{c_\lambda}\lesssim \Big(\frac{\mu}{\lambda}\Big)^{\varepsilon},\ \text{for}\ \mu\ge \lambda,$$
 where $\varepsilon$ is a small constant.
  \item It bounds the dyadic norms of $u$, namely $ \| u_\lambda  \|_{L^2}\le  C c_\lambda$.
\end{enumerate}
 
Next we place the initial data $\pa u[0] = (\pa_x u_0,u_1)$ in Theorem \ref{main} under suitable  frequency envelopes $\{c_\lambda\}_\lambda$ and $\{c_{\lambda,\nu}\}_{\nu}$. We define
\begin{equation}
\begin{aligned}
 &~ c_\lambda = \sup_{\lambda'} e^{-\delta^2_0 |\ln \lambda- \ln \lambda'|}M^{-1}\| P_{\lambda'} \pa u[0]\|_{H^{\frac{5}{2}+3\delta}},\\
 &~ c_{\lambda,\nu} =\nu^{-\delta_0^2}c_\lambda +  \sup_{\lambda',\nu'} e^{-\delta^2_0 (|\ln \lambda- \ln \lambda'|+|\ln \nu-\ln\nu'|)}N^{-1}\|R_{\nu'} P_{\lambda'} \Lambda_{\omega}^{2+\delta} \pa u[0]\|_{H^{\frac{1}{2}+\delta}} .
\end{aligned}
\end{equation}
It is clear that $\{c_\lambda\}_\lambda,\{c_{\lambda,\nu}\}_{\nu}$ are indeed frequency envelopes satisfying
$$\sum_\lambda c_\lambda^2,\ \sum_{\lambda,\nu} c_{\lambda,\nu}^2\lesssim 1,\ \nu^{-\delta_0^2 }c_\lambda\le  c_{\lambda,\nu},$$ 
and 
$$  \Big(\frac{\lambda}{\mu}\Big)^{\delta_0^2}\lesssim \frac{c_\mu}{c_\lambda},\frac{c_{\mu,\nu}}{c_{\lambda,\nu}}\lesssim \Big(\frac{\mu}{\lambda}\Big)^{\delta_0^2},\  \Big(\frac{\nu_2}{\nu_1}\Big)^{\delta_0^2}\lesssim \frac{c_{\lambda,\nu_1}}{c_{\lambda,\nu_2}}\lesssim \Big(\frac{\nu_1}{\nu_2}\Big)^{\delta_0^2},\ \text{for}\ \mu\ge  \lambda, \nu_1\ge \nu_2.$$
For the initial data we have
\begin{equation}
  \| P_\lambda \pa u[0] \|_{L^2}\le  M \lambda^{-\frac{5}{2}-3\delta}c_\lambda,\    \|R_\nu P_\lambda \pa u[0]  \|_{L^2}\le N\nu^{-2-\delta}\lambda^{-\frac{1}{2}-\delta}c_{\lambda,\nu}.
\end{equation}
By the boundedness of $R_\nu$, we can interpolate the above bounds as 
\begin{align}
 &~ \|R_\nu P_\lambda \pa u[0]  \|_{L^2}\lesssim  \|R_\nu P_\lambda \pa u[0]  \|_{L^2}^{\frac{1}{2}} \| P_\lambda \pa u[0] \|_{L^2}^{\frac{1}{2}} \nonumber\\
 \le&~ ( N \nu^{-2-\delta}\lambda^{-\frac{1}{2}-\delta}c_{\lambda,\nu})^{\frac{1}{2}} (M  \lambda^{-\frac{5}{2}-3\delta}c_\lambda)^{\frac{1}{2}}\nonumber\\
 \le&~ (MN)^{\frac{1}{2}} \lambda^{-\frac{3}{2}-2\delta}\nu^{-1-\frac{\delta}{2}}c_{\lambda,\nu}^{\frac{1}{2}} c_{\lambda}^{\frac{1}{2}}\nonumber \\
  \le&~ \varepsilon_0 \lambda^{-\frac{3}{2}-2\delta}\nu^{-1-\frac{\delta}{3}} c_{\lambda,\nu},\ \text{where }\varepsilon_0:=(MN)^{\frac{1}{2 }}\ll 1
\end{align}
and the last line follows from the fact that $\delta_0\ll\delta$ and $\nu^{-\delta_0^2}c_\lambda\le c_{\lambda,\nu}.$ 

Our goal is to bound the dyadic pieces of our solution with the same frequency envelopes. 
We decompose the solution $u$ as 
$$u=  \sum_{\lambda\in 2^{\mathbb{N}}}u_\lambda = \sum_{\lambda,\nu \in 2^{\mathbb{N}}}u_{\lambda,\nu}$$
and we have the equations for the dyadic pieces as
\begin{align}
  &~ \Box_{\bar{g}} u_\lambda =-[P_\lambda,\bar{h}^{\alpha \beta}\pa_{\alpha}\pa_{\beta}]u, \\
  &~ \Box_{\bar{g}} u_{\lambda,\nu}  =-[R_\nu P_\lambda,\bar{h}^{\alpha \beta}\pa_{\alpha}\pa_{\beta}]u,\\
  &~ \Box_{\bar{g}} \pa_t u_\lambda  =- [P_\lambda,\bar{h}^{\alpha \beta}\pa_{\alpha}\pa_{\beta}]\pa_t u - P_\lambda(\pa_t\bar{h}^{\alpha \beta}\pa_{\alpha}\pa_{\beta} u),\\
  &~ \Box_{\bar{g}} \pa_t u_{\lambda,\nu}  =-[R_\nu P_\lambda,\bar{h}^{\alpha \beta}\pa_{\alpha}\pa_{\beta}]\pa_t u -  R_\nu P_\lambda(\pa_t\bar{h}^{\alpha \beta}\pa_{\alpha}\pa_{\beta} u),
\end{align}
where $\bar{h}^{\alpha \beta} = -h^{\alpha\beta \gamma}\pa_\gamma u$.

We shall establish the following bounds for $u_\lambda, u_{\lambda,\nu}$, namely
\begin{theorem}\label{bound}
  Let $u$ be a solution to \eqref{quasi} in $[0,T]$ for arbitrarily large $T>0$, then we have energy bounds 
  \begin{equation}
    \|  \pa u_\lambda  \|_{L^2}\lesssim  M \lambda^{-\frac{5}{2}-3\delta}c_\lambda,\    \| \pa u_{\lambda,\nu}   \|_{L^2}\lesssim N \nu^{-2-\delta}\lambda^{-\frac{1}{2}-\delta}c_{\lambda,\nu}.
  \end{equation}
\end{theorem}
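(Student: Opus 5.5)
We argue by a continuity (bootstrap) argument in $T$. Assume on $[0,T]$ the bounds of Theorem \ref{bound} with the implicit constant replaced by a large constant $C_0$, in the stronger \emph{translation-invariant} form
\begin{equation*}
E(u_\lambda)^{\frac12}\le C_0 M\lambda^{-\frac52-3\delta}c_\lambda,\qquad E(u_{\lambda,\nu})^{\frac12}\le C_0N\nu^{-2-\delta}\lambda^{-\frac12-\delta}c_{\lambda,\nu},
\end{equation*}
together with the interpolated bound $E(u_{\lambda,\nu})^{\frac12}\lesssim C_0\varepsilon_0\lambda^{-\frac32-2\delta}\nu^{-1-\frac{\delta}{3}}c_{\lambda,\nu}$. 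These bounds hold for small $T$ by local theory and continuity of $E$. Summing the envelopes and using \eqref{2.6} and the angular Sobolev embedding, we first check that $\bar h=-h\cdot\pa u$ satisfies \eqref{small_con} with $\varepsilon\lesssim C_0(MN)^{\frac12}\ll1$. The factor $\langle r\rangle^{\delta_0}$ comes from $\|r^{1-\varepsilon}\Lambda_\omega^{1+\delta_0}\pa u\|_{L^\infty_rL^2_\omega}$, applied with the interpolated envelope bounds. Hence Propositions \ref{Le}, \ref{dcnull} and \ref{null} apply to all pieces $v=u_\lambda,\ \pa_tu_\lambda,\ u_{\lambda,\nu}$ and their time derivatives. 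By the energy balance law, $\|\pa v(T)\|_{L^2}^2\lesssim E(v)$, and $E(v)$ consists of the initial energy plus $\sup_{y_0}\|\mathbf F_{e}\|_{L^1}$ and $\sup_{y_0}\|\mathbf F_m\|_{L^1}$. The initial energy is bounded by the envelope estimates for the data, with no $C_0$. It therefore suffices to show that the source contributions are bounded by $C_0^3\varepsilon_0^2$ times the squared target bound; this improves $C_0$.

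The sources are the commutators $[P_\lambda,\bar h\pa^2]u$ and $[R_\nu P_\lambda,\bar h\pa^2]u$, plus the $\bar Q$ terms $\pa\bar h\cdot\pa v\cdot\pa v$. By the null condition, the identities \eqref{r1} reduce each to a sum of $G(\cdot,\cdot)$ terms. We paraproduct-decompose $\bar h\pa^2u=\sum_{\lambda_1,\lambda_2}\pa u_{\lambda_1}\pa^2u_{\lambda_2}$ into the cases low--high ($\lambda_1\ll\lambda\sim\lambda_2$), high--low ($\lambda_1\sim\lambda\gg\lambda_2$), and high--high ($\lambda_1\sim\lambda_2\gtrsim\lambda$).
\begin{itemize}
\item In the low--high case, Lemma \ref{pcom} converts $[P_\lambda,\pa u_{\lambda_1}]$ into $\lambda^{-1}\lambda_1L(\pa u_{\lambda_1},\cdot)$, which removes the derivative loss.
\item In the other two cases no commutator is needed, since the derivative falls on the frequency $\lesssim\lambda_1$.
\end{itemize}
Each resulting trilinear term $\pa v^{y_0}\cdot G(\cdot,\cdot)$ is estimated in $L^1_{t,x}$ by \eqref{4.33}/\eqref{4.34}, after angular decomposition of both inputs. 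This gives
\begin{equation*}
E(v)^{\frac12}\min\{\nu_1,\nu_2\}^{1+\delta_0}\min\{\lambda_1,\lambda_2\}^{\frac12+\delta_0}\lambda_{\rm eff}E(u_{\lambda_1,\nu_1})^{\frac12}E(u_{\lambda_2,\nu_2})^{\frac12}.
\end{equation*}
The bounds are then summed dyadically using the bootstrap bounds and the slow variation of the envelopes. The key counting for the $M$-bound is as follows. Put the $M$-norm on the high-frequency factor and the $\varepsilon_0$-interpolated norm on the low one. The weight $\nu_1^{1+\delta_0}\lambda_1^{\frac12+\delta_0}$ is absorbed by $\lambda_1^{-\frac32-2\delta}\nu_1^{-1-\delta/3}$ with room $\delta-\delta_0$, which yields a summable factor. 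This yields $E(u_\lambda)\lesssim M^2\lambda^{-5-6\delta}c_\lambda^2(1+C_0^3\varepsilon_0)$.

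The main obstacle is the angular bound for $u_{\lambda,\nu}$, where $R_\nu$ falls on a product. We use \eqref{4.35}. When the coefficient has angular frequency $\nu_1\ll\nu$, the commutator gains $\nu^{-1}\nu_1$, so $\nu_2\sim\nu$ and the bound $N\nu^{-2-\delta}$ from the high factor survives. When $\nu_1\gtrsim\nu$, the high angular frequency sits on the coefficient. In that case we use the interpolated form \eqref{4.29} with $s$ chosen so that the $N$-weight $\nu_1^{-2-\delta}$ on $u_{\lambda_1,\nu_1}$ controls $\nu^{-2-\delta}$ (since $\nu\lesssim\nu_1$). The remaining factor is then bounded by $E(u_{\lambda_2})^{\frac{1-s}{2}}$ with the $M$-bound, giving the mixed quantity $M^{1-\frac{\delta}{4}}N^{1+\frac{5\delta}{4}}$ of Theorem \ref{main}. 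The delicate bookkeeping is here: we balance $\lambda$-powers ($\lambda^{-\frac12-\delta}$ versus $\lambda_2\cdot\lambda_1^{\frac12+\delta_0}$) against $\nu$-powers while keeping $\nu$ summable via $\nu_1^{\delta_0-\delta/3}$. The $\pa_t$ equations and the $\bar Q$ terms are handled identically; $\pa_t\bar h$ is just another $\pa u$ factor. The terms $r_{y_1}^{-1}v$ in $\mathbf F_m$ are covered by \eqref{c2}. Closing the bootstrap then gives Theorem \ref{bound} for all $T$.
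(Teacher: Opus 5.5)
There is a genuine gap in the step you single out as the main obstacle: the $N$-bound \eqref{BA2} for $u_{\lambda,\nu}$ when $\nu_1\gtrsim\nu$.

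The interpolated estimate \eqref{4.29} carries the weight $\nu_1^{1+\delta_0}$, not $\min\{\nu_1,\nu_2\}^{1+\delta_0}$. Because $u_{\lambda_2}$ is not angularly localized, the $L^\infty_\omega$ norm has to go on the $(\lambda_1,\nu_1)$ factor. Hence $\nu_1^{1+\delta_0}\cdot N\nu_1^{-2-\delta}=N\nu_1^{-1-\delta+\delta_0}$, while the $M$-bound on $u_{\lambda_2}$ supplies no angular decay at all. Take the configuration $\nu_1\sim\nu$, $\nu_2\sim 1$. It really occurs: there $R_\nu(\pa u_{\lambda_1,\nu_1}\,\pa^2 u_{\lambda_2,\nu_2})\neq 0$ while the commutator gives no gain. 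In that configuration you fall short of the target $\nu^{-2-\delta}$ by a factor $\nu^{1+\delta_0}$. Moving the $L^\infty_\omega$ to the other factor only transfers the loss to $\nu_2$, which again needs angular decay the $M$-bound lacks.

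The smallness is also wrong. For small $s$, $M^{1-s}N^{1+s}$ is exactly $\varepsilon_0\varepsilon_1$ when $s=\frac{3\delta}{4(2+\delta)}$, and this is not $\ll N$ once $M\gg 1$, e.g. for short pulse data.

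The paper handles this case with \eqref{4.35} instead:
\begin{itemize}
\item both factors are angularly localized, with the $L^\infty_\omega$ norm on the lower angular frequency;
\item \eqref{BA2} goes on $u_{\lambda_1,\nu_1}$, so $\nu_1^{-2-\delta}\lesssim\nu^{-2-\delta}$;
\item the interpolated bound \eqref{refined2} goes on $u_{\lambda_2,\nu_2}$; its factor $\nu_2^{-1-\delta}$ absorbs $\min\{\nu_1,\nu_2\}^{1+\delta_0}$, and its constant $\varepsilon_1$ provides the gain.
\end{itemize}

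The \eqref{4.29} interpolation, with $M^{1-s}N^{1+s}=\varepsilon_0\varepsilon_1$, belongs elsewhere. The paper uses it to recover the $\varepsilon_0$-refined bound \eqref{refined} in the regime $\lambda_1\ll\lambda_2\sim\lambda$. There the target only asks for $\nu^{-1-\delta/3}$, which $N\nu_1^{-1-\delta+\delta_0}$ does give. But the weight $\lambda_2\lambda_1^{\frac12+\delta_0}$ forces $\lambda_2^{-\frac52}$ decay, and only the $M$-bound provides that.

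Your proposal never closes that refined bound. You call it interpolated, but it is not obtained by interpolating the bounds on $E(u_\lambda)$ and $E(u_{\lambda,\nu})$. The functional $E$ also contains the translated $L^1_{t,x}$ source norms, and $N(u_{\lambda,\nu})$ is not $R_\nu N(u_\lambda)$, so no such interpolation is available. The refined bound must be a separate bootstrap hypothesis and be improved separately, and that is where the hardest case sits.

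Likewise, your bootstrap must include $\lambda^{-1}E(\pa_t u_\lambda)^{\frac12}$ and $\lambda^{-1}E(\pa_t u_{\lambda,\nu})^{\frac12}$, as in \eqref{BA1} and \eqref{refined}. The coefficient $\bar h$ contains $\pa_t u$, and the $L$-notation only absorbs spatial derivatives. So $G(\pa_t u_{\mu,\nu},\cdot)$ can only be estimated through Proposition \ref{null} with $E(\pa_t u_{\mu,\nu})$ on the right.
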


In view of the energy balance law \eqref{energy}, it is sufficient to control the functional $E(u_\lambda),E(u_{\lambda,\nu})$. To prove these bounds, we make the following bootstrap
assumptions, in which we assume the same bounds but with a worse constant $C$:
\begin{equation}\label{BA1}
  E(u_\lambda)^{\frac{1}{2}}, \lambda^{-1} E (\pa_t u_{\lambda})^{\frac{1}{2}}\le  CM  \lambda^{-\frac{5}{2}-3\delta}c_\lambda,
\end{equation}
and 
\begin{equation}\label{BA2}
  E (u_{\lambda,\nu})^{\frac{1}{2}}\le  CN \nu^{-2-\delta}\lambda^{-\frac{1}{2}-\delta}c_{\lambda,\nu}.
\end{equation}
Moreover, we assume the following refined bounds, obtained by interpolating the initial data bounds:
\begin{equation}\label{refined}
  E (u_{\lambda,\nu})^{\frac{1}{2}}\le  C\varepsilon_0 \lambda^{-\frac{3}{2}-2\delta}\nu^{-1-\frac{\delta}{3}} c_{\lambda,\nu} ,\ \lambda^{-1}E( \pa_t u_{\lambda,\nu})^{\frac{1}{2}}\le  C\varepsilon_0 \lambda^{-\frac{3}{2}-\delta}\nu^{-1-\frac{\delta}{3}} c_{\lambda,\nu} .
\end{equation}
Then we seek to improve the constants in these bounds. Since the initial data satisfy the bounds without the constant, it suffices to control the source terms. The gain comes from the fact that the space-time estimates provided by Proposition \ref{null} always carry an extra factor of $\varepsilon$. Once the bootstrap argument is closed, a continuity argument shows that
\begin{remark}
  It suffices to prove Theorems \ref{main} and \ref{bound} under the bootstrap assumptions \eqref{BA1}, \eqref{BA2}, and \eqref{refined}.
\end{remark}

\section{Proof of the global well-posedness results}
\subsection{Closing the bootstrap arguments}
Under the assumptions in Theorem \ref{main}, we have
\begin{equation}
  \varepsilon := (M^{1-\frac{\delta}{4}}N^{1+\frac{5\delta}{4}})^{\frac{1}{2+\delta}} + (M N)^{\frac{1}{2}}\ll 1.
\end{equation}
Using the refined bound \eqref{refined}, we verify the smallness condition \eqref{small_con} required by the local energy estimates. Indeed, we have 
\begin{equation}\label{sm}
  \begin{aligned}
    &~\|\langle r \rangle^{\delta_0}\bar{h}\|_{L^\infty} \\
    \lesssim&~\|\bar{h}\|_{L^\infty }  + \| \bar{h} \|^{1-\delta_0}_{L^\infty}\|r\bar{h}  \|^{\delta_0}_{L^\infty}\\
    \lesssim&~\|\bar{h}\|_{H^{\frac{3}{2}+\delta_0} }  + \| \bar{h} \|^{1-\delta_0}_{H^{\frac{3}{2}+\delta_0}}\|\Lambda_\omega^{1+\delta_0}\bar{h}  \|^{\delta_0}_{H^{\frac{1}{2}+\delta_0}}\\
    \lesssim &~ \|\Lambda_\omega^{1+\delta_0}\pa u\|_{H^{\frac{3}{2}+\delta_0}}\lesssim C\varepsilon \ll 1, \\
  \end{aligned}
\end{equation}
The first inequality is an interpolation between the $L^\infty$ and the weighted $L^\infty$ norms of $\bar{h}$; the second uses the Sobolev embedding in $x$, the angular Sobolev embedding, and the trace-type inequality \eqref{2.6}; the third follows from $\bar{h}=h\cdot\pa u$ and $\Lambda_\omega\ge 1$, and the last step uses the refined bound \eqref{refined} together with the summability of the envelopes.
We shall choose $C\varepsilon\ll 1$ so small that the coefficients $\bar{h} = h\cdot \pa u$ satisfy the smallness assumption in Proposition \ref{Le}.

Next, we interpolate the bootstrap bounds \eqref{BA2} and \eqref{refined} to obtain the refined bound with the smaller constant $\varepsilon_1$:
\begin{align}\label{refined2}
  &~ E (u_{\lambda,\nu})^{\frac{1}{2}} \le (C\varepsilon_0 \lambda^{-\frac{3}{2}-2\delta}\nu^{-1-\frac{\delta}{3}} c_{\lambda,\nu})^{\frac{4-\delta}{4+2\delta}}  (CN \nu^{-2-\delta}\lambda^{-\frac{1}{2}-\delta}c_{\lambda,\nu})^{1-\frac{4-\delta}{4+2\delta}}\nonumber\\
  \le&~ C \varepsilon_0^{\frac{4-\delta}{4+2\delta}} N^{ \frac{3\delta}{4+2\delta}} \lambda^{-\frac{3}{2}-\frac{6\delta}{5}}\nu^{-1-\delta}c_{\lambda,\nu}\nonumber\\
  =&~ C\varepsilon_1 \lambda^{-\frac{3}{2}-\frac{6\delta}{5}}\nu^{-1-\delta}c_{\lambda,\nu},\ \text{where}\ \varepsilon_1 := (M^{1-\frac{\delta}{4}}N^{1+\frac{5\delta}{4}})^{\frac{1}{2+\delta}}\ll 1.
\end{align}

To close the bootstrap argument, it remains to control the source terms; this is the content of the following proposition. 
\begin{proposition}\label{Source}Under the bootstrap assumptions, we have 
\begin{align}
  &~ \|\mathbf{F}(u^{y_0}_\lambda)\|_{L^1_{t,x}}\lesssim C\varepsilon( CM  \lambda^{-\frac{5}{2}-3\delta}c_\lambda)^2\label{u1},\\
  &~ \|\mathbf{F}((u_{\lambda,\nu})^{y_0})\|_{L^1_{t,x}}\lesssim  C \varepsilon (CN \nu^{-2-\delta}\lambda^{-\frac{1}{2}-\delta}c_{\lambda,\nu} )^2,  C  \varepsilon(C\varepsilon_0 \lambda^{-\frac{3}{2}-2\delta}\nu^{-1-\frac{\delta}{3}} c_{\lambda,\nu})^2,\label{u2}\\
  &~ \|\mathbf{F}((\pa_t u_\lambda)^{y_0})\|_{L^1_{t,x}}\lesssim C \varepsilon\lambda^{2}(C M \lambda^{-\frac{5}{2}-3\delta}c_\lambda)^2\label{u3},\\
  &~ \|\mathbf{F}((\pa_t u_{\lambda,\nu})^{y_0})\|_{L^1_{t,x}}\lesssim  C \varepsilon \lambda^2 (C \varepsilon_0 \lambda^{-\frac{3}{2}-\delta}\nu^{-1-\frac{\delta}{3}} c_{\lambda,\nu})^2 ,\label{u4}
\end{align}
where $\mathbf{F}(v) = \mathbf{F}_e(v)+ \mathbf{F}_m(v)$.
\end{proposition}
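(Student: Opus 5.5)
The plan is to expand every source density into trilinear expressions $(\pa v+r^{-1}_{y_1}v)\cdot G(\cdot,\cdot)$ and then apply the $L^1_{t,x}$ bounds \eqref{4.33}--\eqref{4.34}. The needed smallness and summability come from the bootstrap bounds \eqref{BA1}, \eqref{BA2}, \eqref{refined} and \eqref{refined2}.

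\emph{Step 1 (structure of the source terms).} I treat $v=u_\lambda$ first. Here $N(u_\lambda)=-[P_\lambda,\bar h^{\alpha\beta}\pa_\alpha\pa_\beta]u$ with $\bar h=-h\cdot\pa u$. I decompose $u=\sum_{\lambda_1}u_{\lambda_1}$ in the coefficient and $u=\sum_{\lambda_2}u_{\lambda_2}$ in the second-derivative factor, which gives three cases.
\begin{itemize}
\item Low--high case, $\lambda_1\ll\lambda_2\sim\lambda$. Lemma \ref{pcom} writes the commutator as $\lambda^{-1}\lambda_1L(\pa u_{\lambda_1},\pa_x\pa u_{\lambda_2})$. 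This trades one derivative on the high factor for one on the low factor.
\item High--low case, $\lambda_1\sim\lambda\gtrsim\lambda_2$. No commutator gain is needed; the derivatives are already favourably distributed.
\item High--high case, $\lambda_1\sim\lambda_2\gg\lambda$. Both terms of the commutator are kept. Since the output has frequency $\lambda$, derivatives can be moved via the $L$-notation rules.
\end{itemize}
Crucially, because the coefficients $h^{i\alpha\beta}$ satisfy the null condition, the first identity of \eqref{r1} shows that each of these pieces is, in $L$-notation with translations, of the form $\lambda^{a}L(G(u_{\lambda_1},u_{\lambda_2}))$ with appropriate powers of $\lambda_1,\lambda_2$.

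The terms $\bar Q_0,\bar Q_1$ have the schematic form $\pa\bar h\cdot\pa v\cdot\pa v$ (or $r^{-1}v\cdot\pa\bar h\cdot\pa v$). These are handled by the second identity of \eqref{r1}, which gives $\pa v\cdot G(v,\pa u_{\lambda_1})$.

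For $\pa_t u_\lambda$ there is the extra term $P_\lambda(\pa_t\bar h\,\pa^2u)$. It equals $h\,\pa_t\pa u\,\pa^2u$, which is again a null form, now in $(\pa_tu,u)$. It carries exactly the extra factor $\lambda$ that appears as $\lambda^2$ in \eqref{u3} and \eqref{u4}.

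\emph{Step 2 (estimating each piece).} Translations are absorbed as in Section 2, since $L^1_{t,x}$ is translation invariant and \eqref{4.33}--\eqref{4.34} hold uniformly in $y_\alpha$.
\begin{itemize}
\item For the low-frequency factor $u_{\lambda_1}$ I further decompose in angle, $u_{\lambda_1}=\sum_{\nu_1}u_{\lambda_1,\nu_1}$, and apply \eqref{4.33} with $w=u_{\lambda_1,\nu_1}$. This costs $\nu_1^{1+\delta_0}\lambda_1^{\frac12+\delta_0}$.
\item Inserting \eqref{refined2}, $E(u_{\lambda_1,\nu_1})^{1/2}\le C\varepsilon_1\lambda_1^{-\frac32-\frac{6\delta}5}\nu_1^{-1-\delta}c_{\lambda_1,\nu_1}$, the $\nu_1$-sum converges because $\delta_0\ll\delta$.
\item The $\lambda_1$-sum converges for the same reason, since the low-high case carries the factor $\lambda_1\cdot\lambda_1^{\frac12+\delta_0}\cdot\lambda_1^{-\frac32-\frac{6\delta}5}=\lambda_1^{\delta_0-\frac{6\delta}5}$.
\item This produces the factor $C\varepsilon$. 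The other two factors are $E(u_\lambda)^{1/2}$ and $E(u_{\lambda_2})^{1/2}$, with $\lambda_2\sim\lambda$. Both are bounded by \eqref{BA1}, and the slowly varying property of $c_\lambda$ collapses the resulting sum to $(CM\lambda^{-\frac52-3\delta}c_\lambda)^2$, which is \eqref{u1}.
\item In the high--high case, the factor $\min\{\lambda_1,\lambda_2\}^{\frac12+\delta_0}$ together with the decay $\lambda_2^{-\frac52-3\delta}$ and the envelope bound $c_{\lambda_2}\lesssim(\lambda_2/\lambda)^{\delta_0^2}c_\lambda$ gives a geometrically convergent sum.
\end{itemize}

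\emph{Step 3 (angularly localized pieces).} For $u_{\lambda,\nu}$ the source is $-[R_\nu P_\lambda,\bar h\pa^2]u$. I write $R_\nu P_\lambda(\bar h\pa^2u)-\bar hR_\nu P_\lambda\pa^2u$ as a spatial commutator plus the angular commutator $[R_\nu,\pa u_{\lambda_1,\nu_1}]\pa^2u_{\lambda_2,\nu_2}$. The angular commutator is estimated by \eqref{4.35}, or by \eqref{4.29} when the high factor must be taken in the unlocalized energy. This gains $\min\{\nu^{-1}\nu_1,1\}$, so the sum over $\nu_1\ll\nu$ produces $\nu_1^{2+\delta_0}\nu^{-1}$. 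That remains acceptable against $\nu_1^{-1-\delta}$ from \eqref{refined2}, and the sum over $\nu_1\gtrsim\nu$ is controlled directly.

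For the two alternative bounds in \eqref{u2}, I pair the output $E(u_{\lambda,\nu})^{1/2}$ respectively with \eqref{BA2} for the high factor $u_{\lambda_2,\nu_2}$ (with $\nu_2\sim\nu$ when $\nu_1\ll\nu$), or with \eqref{refined}. In both cases the low factor is taken in \eqref{refined2}, which produces the $\varepsilon$.

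\emph{Main obstacle.} I expect the hardest point to be the bookkeeping of angular frequencies in Step 3. The sub-case is $\nu_1\gtrsim\nu$, where the output angular frequency is low but the coefficient is angularly rough. There the factor $\min\{\nu_1,\nu_2\}^{1+\delta_0}$ must be paid using the decay $\nu_1^{-1-\delta}$ of the coefficient while keeping the envelope $c_{\lambda,\nu}$ on the output. Here I would use the interpolated bound \eqref{4.29} with $s$ chosen to balance $N$ against $M$, together with the lower bound $\nu^{-\delta_0^2}c_\lambda\le c_{\lambda,\nu}$ to convert $c_{\lambda_2}$ into $c_{\lambda,\nu}$. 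This is exactly where the specific smallness quantity $M^{1-\frac\delta4}N^{1+\frac{5\delta}4}$ should enter.
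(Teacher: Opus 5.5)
Your Steps 1--2 match the paper's treatment of \eqref{u1} and \eqref{u3}; using \eqref{refined2} instead of \eqref{refined} for the small factor is harmless. Step 3, however, has a genuine gap. It is the angular-commutator term $P_\lambda\big([R_\nu,h^{i\alpha\beta}\pa_\beta u_{\lambda_1,\nu_1}]\pa_i\pa_\alpha u_{\lambda_2,\nu_2}\big)$ in the low--high configuration $\lambda_1\ll\lambda_2\sim\lambda$. Unlike the $P_\lambda$-commutator, this term carries no factor $\lambda^{-1}\lambda_1$, because \eqref{angular} gains only $\nu^{-1}\nu_1$. So \eqref{4.35} charges the full $\lambda_2\lambda_1^{\frac12+\delta_0}\sim\lambda\,\lambda_1^{\frac12+\delta_0}$, and both derivatives sit on the factor at the output frequency.

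With your pairing, the high factor $u_{\lambda_2,\nu_2}$ (with $\nu_2\sim\nu$) gets the same bound as the output, \eqref{BA2} resp.\ \eqref{refined}, and the coefficient gets \eqref{refined2}. Summing over $\lambda_1\ll\lambda$ and $\nu_1\ll\nu$ then gives $E(u_{\lambda,\nu})^{\frac12}E(u_{\lambda_2,\nu_2})^{\frac12}\cdot C\varepsilon_1\,\lambda\,\nu^{\delta_0-\delta}$. This exceeds the target in \eqref{u2} by a full factor $\lambda$, in both alternatives. No bound on the coefficient can repair this, since the terms with $\lambda_1\sim 1$ alone already produce the loss. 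So the real obstacle is this quasilinear derivative loss, not the angular bookkeeping for $\nu_1\gtrsim\nu$ that you single out.

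The paper closes this case by reversing the roles, trading angular for spatial regularity. The low coefficient $u_{\lambda_1,\nu_1}$ is placed in \eqref{BA2}. Its decay $\nu_1^{-2-\delta}$, together with the commutator factor $\min\{\nu^{-1}\nu_1,1\}$, supplies the angular decay that the high factor no longer provides. The high factor is placed in a bound with one extra power $\lambda_2^{-1}$ relative to the target, paid for with angular decay:
\begin{itemize}
\item for the \eqref{BA2}-type bound, it is \eqref{refined2};
\item for the \eqref{refined}-type bound, it is the interpolation \eqref{4.29} of \eqref{BA2} with \eqref{BA1} at $s=\frac{3\delta}{4(2+\delta)}$, which yields $\lambda_2^{-\frac52-2\delta}$.
\end{itemize}
The constant produced in the second case is $M^{1-s}N^{1+s}=\varepsilon_0\varepsilon_1$. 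This is exactly where the smallness quantity of Theorem \ref{main} enters.

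You correctly named \eqref{4.29}, the envelope inequality $\nu^{-\delta_0^2}c_\lambda\le c_{\lambda,\nu}$, and this quantity. But you attached them to the wrong sub-case while keeping a pairing that cannot close. The bounds must be assigned according to which factor carries the two derivatives, not according to which factor carries the output frequency.
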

\begin{proof}Since $L_{t,x}^1$ is translation invariant, we may add translations to the source terms $\mathbf{F}_e(u^{y_0}_\lambda), \mathbf{F}_m(u^{y_0}_\lambda)$. We will repeatedly use the following summation fact for the high frequency pieces of the envelopes:
  \begin{align*}
  &~ \sum_{\lambda_1\gtrsim \lambda,\nu_1\gtrsim \nu} \lambda_1^{-s_1}\nu_1^{-s_2}c_{\lambda_1,\nu_1}
  \lesssim&~ \sum_{\lambda_1\gtrsim \lambda,\nu_1\gtrsim \nu}\lambda_1^{-s_1}\nu_1^{-s_2} \Big(\frac{\lambda_1 \nu_1}{\lambda \nu}\Big)^{\delta_0^2}c_{\lambda,\nu}
  \lesssim&~ \lambda^{-s_1}\nu^{-s_2}c_{\lambda,\nu},\ \text{for}\ s_1,s_2\ge  2\delta_0^2.
   \end{align*}

\textbf{(A). Estimates for \eqref{u1}.} 
Up to a translation, the source terms in \eqref{u1} take the form 
 $$\begin{aligned}
 &~ |\mathbf{F}_e(u^{y_0}_\lambda)|,|\mathbf{F}_m(u^{y_0}_\lambda)| \sim  |(\pa u_\lambda + r_{y_0}^{-1} u_\lambda)\cdot [P_\lambda,\bar{h}^{\alpha \beta} ]\pa_\alpha \pa_\beta u|+|(\pa u_\lambda + r_{y_0}^{-1} u_\lambda)\cdot G(\pa u,u_\lambda)| .
  \end{aligned} $$
The first term comes from the commutator $[P_\lambda,\bar{h}^{\alpha \beta}\pa_\alpha\pa_\beta]$, and the second from the null form structure of the source; see \eqref{r1}.

  Using Lemma \ref{pcom} and the $L$-notation, we expand the commutator structure as 
\begin{align}
   [P_\lambda,\bar{h}^{\alpha \beta} \pa_\alpha \pa_\beta ]u
 =  \sum_{\substack{\lambda_1\gtrsim \lambda, \lambda_2}}h^{i\alpha\beta}L(\pa_\beta u_{\lambda_1},  \pa_i \pa_\alpha u_{\lambda_2})+ \sum_{\lambda_1 \ll \lambda_2\sim \lambda}\lambda^{-1}\lambda_1 h^{i\alpha\beta}L( \pa_\beta u_{\lambda_1},  \pa_i \pa_\alpha u_{\lambda_2})
\end{align}
In the first summation, where $\lambda_1\gtrsim\lambda$, no gain is available from the commutator; in the second, where $\lambda_1\ll\lambda$, Lemma \ref{pcom} yields the factor $\lambda^{-1}\lambda_1$.
In view of \eqref{re} and \eqref{r1}, we have 
\begin{align}
  &~ \|(\pa u_\lambda + r_{y_0}^{-1} u_\lambda) [P_\lambda,\bar{h}^{\alpha \beta} ]\pa_\alpha \pa_\beta u\|_{L^1_{t,x}}\notag\\
  \lesssim&~ \sup_{y_\alpha}\sum_{\lambda_{1}\gtrsim \lambda,\lambda_2}\sum_\nu \lambda_2 \|(\pa u^{y_1}_\lambda+r^{-1}_{y_0}u^{y_1}_\lambda) G( u_{\lambda_2,\nu}, u^{y_2}_{\lambda_1})  \|_{L^1_{t,x}}\notag\\
  &~ \quad + \sup_{y_\alpha}\sum_{\lambda_{1}\ll \lambda_2 \sim \lambda}\sum_\nu \lambda^{-1}\lambda_1 \lambda_2\|(\pa u^{y_1}_\lambda+r^{-1}_{y_0}u^{y_1}_\lambda) G( u^{y_2}_{\lambda_2}, u_{\lambda_1,\nu})  \|_{L^1_{t,x}}.
\end{align}
Applying \eqref{4.31} in Proposition \ref{null}, the above summation is bounded by 
\begin{align}
  &~ E ( u_\lambda)^{\frac{1}{2}}\sum_{\lambda_{1}\gtrsim \lambda,\lambda_2}\sum_\nu\nu^{1+\delta_0}\lambda_{2}^{\frac{3}{2}+\delta_0}E( u_{\lambda_2,\nu})^{\frac{1}{2}}E(u_{\lambda_1})^{\frac{1}{2}}\nonumber\\
  &+  E ( u_\lambda)^{\frac{1}{2}}\sum_{\lambda_{1}\ll \lambda_2 \sim \lambda}\sum_\nu\nu^{1+\delta_0}\lambda^{-1}\lambda_{1}^{\frac{3}{2}+\delta_0} \lambda_2 E( u_{\lambda_2})^{\frac{1}{2}}E(u_{\lambda_1,\nu})^{\frac{1}{2}}\notag\\
  \lesssim&~ C\varepsilon_0( E ( u_\lambda) +  \sum_{\lambda_2 \sim \lambda }E( u_\lambda)^{\frac{1}{2}}  \lambda^{-1}\lambda_2 E( u_{\lambda_2})^{\frac{1}{2}})
  \lesssim  C\varepsilon_0( CM  \lambda^{-\frac{5}{2}-3\delta}c_\lambda)^2,
\end{align}
where we applied the refined bound \eqref{refined} to $E( u_{\lambda_1,\nu})$ and $E(u_{\lambda_2,\nu})$.

The term $(\pa u_\lambda + r_{y_0}^{-1} u_\lambda)\cdot G(\pa_x u,u_\lambda)$ is estimated in the same way. For the term with $G(\pa_t u,u_\lambda)$, we apply \eqref{refined} to $E(\pa_t u_{\lambda,\nu})$ and obtain 
\begin{align}
  &~ \|(\pa u_\lambda + r_{y_0}^{-1} u_\lambda)\cdot G(\pa_t u,u_\lambda)\|_{L^1_{t,x}}\notag\\
  \lesssim&~ \sum_{\mu, \nu} \|(\pa u_\lambda + r_{y_0}^{-1} u_\lambda)\cdot G(\pa_t u_{\mu,\nu},u_\lambda)\|_{L^1_{t,x}}\notag\\
  \lesssim&~ E(u_\lambda) \sum_{\mu,\nu}\nu^{1+\delta_0}\mu^{\frac{1}{2}+\delta_0} E(\pa_t u_{\mu,\nu})^{\frac{1}{2}}
  \lesssim  C\varepsilon_0 E(u_\lambda).
\end{align}
\textbf{(B). Estimates for \eqref{u2}.}
We again write the source terms up to a translation as
\begin{equation}
  \begin{aligned}
  &~ |\mathbf{F}_e((u_{\lambda,\nu})^{y_0})|,|\mathbf{F}_m((u_{\lambda,\nu})^{y_0})| \\
 \sim  &~ |(\pa u_{\lambda,\nu} + r_{y_0}^{-1} u_{\lambda,\nu})\cdot [R_\nu P_\lambda,\bar{h}^{\alpha \beta} \pa_\alpha \pa_\beta] u|+ |(\pa u_{\lambda,\nu} + r_{y_0}^{-1} u_{\lambda,\nu})\cdot G(\pa u,u_{\lambda,\nu})| .
   \end{aligned}
\end{equation}

As before, we first estimate the terms without commutator structure: 
\begin{align}
  &~ \|(\pa u_{\lambda,\nu} + r_{y_0}^{-1} u_{\lambda,\nu})\cdot G(\pa u,u_{\lambda,\nu})\|_{L^1_{t,x}}\notag\\
  \lesssim&~ E(u_{\lambda,\nu})\sum_{\mu,\nu}\mu^{\frac{3}{2}+\delta_0} \nu^{1+\delta_0} E( u_{\mu,\nu})^{\frac{1}{2}} +  E(u_{\lambda,\nu})\sum_{\mu,\nu}\mu^{\frac{1}{2}+\delta_0} \nu^{1+\delta_0} E(\pa_t  u_{\mu,\nu})^{\frac{1}{2}}\notag\\
  \lesssim&~ C\varepsilon_0 E(u_{\lambda,\nu}).
\end{align}

To proceed, we decompose the commutator as follows: 
\begin{align}
  &~ [R_\nu P_\lambda,\bar{h}^{\alpha\beta} \pa_\alpha \pa_\beta] u = P_\lambda([R_\nu ,\bar{h}^{\alpha\beta} \pa_\alpha \pa_\beta] u)+ [P_\lambda, \bar{h}^{\alpha \beta}\pa_\alpha \pa_\beta]R_\nu u.
\end{align}

For the terms involving the $P_\lambda$ commutator structure, we have the following expansion:
\begin{align}
    [P_\lambda, \bar{h}^{\alpha \beta}\pa_\alpha \pa_\beta]R_\nu u = &\sum_{\lambda_1\gtrsim \lambda,\lambda_2}\sum_{\nu'}h^{i\alpha\beta }L(\pa_\beta u_{\lambda_1,\nu'},  \pa_i\pa_\alpha  u_{\lambda_2,\nu})\notag\\
  &  + \sum_{\lambda_1\ll \lambda_2\sim \lambda }\sum_{\nu'}  \lambda^{-1}\lambda_1 h^{i\alpha\beta }L(\pa_\beta u_{\lambda_1,\nu'},  \pa_i\pa_\alpha  u_{\lambda_2,\nu}).
\end{align}

Repeating the procedure of proving \eqref{u1}, we have
\begin{align}\label{S1}
  &~ \|(\pa u_{\lambda,\nu} + r_{y_0}^{-1} u_{\lambda,\nu})\cdot [P_\lambda, \bar{h}^{\alpha \beta}\pa_\alpha \pa_\beta]R_\nu u\|_{L^{1}_{t,x}}\nonumber \\
  \lesssim&~ E(u_{\lambda,\nu})^{\frac{1}{2}}\sum_{\lambda_1 \gtrsim \lambda,\lambda_2}\sum_{\nu'} \min\{\nu',\nu\}^{1+\delta_0} \lambda_{2}^{\frac{3}{2}+\delta_0}  E(u_{\lambda_1,\nu'})^{\frac{1}{2}}E(u_{\lambda_2,\nu})^{\frac{1}{2}} \nonumber \\
  &   + E(u_{\lambda,\nu})^{\frac{1}{2}}\sum_{\lambda_1 \ll\lambda_2\sim \lambda }\sum_{\nu'}\min\{\nu',\nu\}^{1+\delta_0}\lambda^{-1}  \lambda_{1}^{\frac{3}{2}+\delta_0} \lambda_2E(u_{\lambda_1,\nu'})^{\frac{1}{2}}E(u_{\lambda_2,\nu})^{\frac{1}{2}}  .
\end{align}
For the first summation in \eqref{S1}, we apply the refined bound \eqref{refined} to $E(u_{\lambda_1,\nu'})$ and  control the summation by
\begin{align}
  &~ E(u_{\lambda,\nu})^{\frac{1}{2}}\sum_{\lambda_1 \gtrsim \lambda,\lambda_2} \sum_{\nu'}\nu'^{1+\delta_0} \lambda_{2}^{\frac{3}{2}+\delta_0} \lambda_2 E(u_{\lambda_1,\nu'})^{\frac{1}{2}}E(u_{\lambda_2,\nu})^{\frac{1}{2}} \nonumber \\
  \lesssim&~ E(u_{\lambda,\nu})^{\frac{1}{2}}\sum_{\lambda_1 \gtrsim \lambda,\lambda_2}\sum_{\nu'}  \nu'^{1+\delta_0}\lambda_{2}^{\frac{3}{2}+\delta_0}  (C\varepsilon_0 \lambda_1^{-\frac{3}{2}-2\delta}\nu'^{-1-\frac{\delta}{3}} c_{\lambda_1,\nu'}) E(u_{\lambda_2,\nu})^{\frac{1}{2}} \nonumber \\
  \lesssim&~ C\varepsilon_0 E(u_{\lambda,\nu})^{\frac{1}{2}}\sum_{\lambda_1 \gtrsim \lambda,\lambda_2}\sum_{\nu'} \nu'^{\delta_0-\frac{\delta}{3}} \lambda_{2}^{\frac{3}{2}+\delta_0}\lambda_1^{-\frac{3}{2}-2\delta}c_{\lambda_1,\nu'}   E(u_{\lambda_2,\nu})^{\frac{1}{2}}.
\end{align}
The remaining summations converge since $\delta_0\ll\delta$ and the envelopes are slowly varying.
Next we apply \eqref{BA2} and \eqref{refined2} to $E(u_{\lambda_2,\nu})$ respectively to obtain
\begin{align}
  &~ \sum_{\lambda_1 \gtrsim \lambda,\lambda_2}  \lambda_{2}^{\frac{3}{2}+\delta_0} \lambda_1^{-\frac{3}{2}-2\delta} E(u_{\lambda_2,\nu})^{\frac{1}{2}}\lesssim \sum_{\lambda_1 \gtrsim \lambda,\lambda_2}  \lambda_{2}^{\frac{3}{2}+\delta_0}  \lambda_1^{-\frac{3}{2}-2\delta}(CN \nu^{-2-\delta}\lambda_2^{-\frac{1}{2}-\delta}c_{\lambda_2,\nu} ) \nonumber\\
  \lesssim&~ CN \nu^{-2-\delta}c_{\lambda,\nu}\Big(\sum_{\lambda_1 \gtrsim \lambda\gtrsim\lambda_2}  \lambda_{2}^{1+\delta_0-\delta}  \lambda_1^{-\frac{3}{2}-2\delta} \big(\frac{\lambda}{\lambda_2}\big)^{\delta_0^2} + \sum_{\lambda_1 \gtrsim \lambda_2\gtrsim\lambda} \lambda_{2}^{1+\delta_0-\delta}  \lambda_1^{-\frac{3}{2}-2\delta} \big(\frac{\lambda_2}{\lambda}\big)^{\delta_0^2}\Big) \nonumber\\
  \lesssim&~ CN \nu^{-2-\delta} \lambda^{-\frac{1}{2}-\delta}c_{\lambda,\nu}
\end{align}
and 
\begin{align}
  &~ C\varepsilon_0\sum_{\lambda_1 \gtrsim \lambda,\lambda_2}\sum_{\nu'} \nu'^{\delta_0-\delta} \lambda_{2}^{\frac{3}{2}+\delta_0}\lambda_1^{-\frac{3}{2}-2\delta}c_{\lambda_1,\nu'}   E(u_{\lambda_2,\nu})^{\frac{1}{2}}\nonumber\\
  \lesssim&~ C\varepsilon_0\sum_{\lambda_1 \gtrsim \lambda,\lambda_2}\sum_{\nu'} \nu'^{\delta_0-\delta} \lambda_{2}^{\frac{3}{2}+\delta_0}\lambda_1^{-\frac{3}{2}-2\delta}c_{\lambda_1,\nu'}  (C\varepsilon_1 \lambda_2^{-\frac{3}{2}-\frac{6\delta}{5}} \nu^{-1-\delta}c_{\lambda_2,\nu}) \nonumber\\
  \lesssim&~ C^2\varepsilon_0\varepsilon_1\sum_{\lambda_1 \gtrsim \lambda,\lambda_2}\lambda_{2}^{\delta_0-\frac{6\delta}{5}}\lambda_1^{-\frac{3}{2}-2\delta}c_{\lambda_2,\nu}\Big(\sum_{\nu'\gtrsim \nu} \nu'^{\delta_0-\delta} \nu^{-1-\delta}c_{\lambda_1,\nu'} +\sum_{\nu'\ll \nu} \nu'^{\delta_0-\delta} \nu^{-1-\delta}\big(\frac{\nu}{\nu'}\big)^{\delta_0^2}c_{\lambda_1,\nu} \Big)   \nonumber\\
  \lesssim&~ C^2\varepsilon_0\varepsilon_1 \nu^{-1-\frac{\delta}{3}}\sum_{\lambda_1 \gtrsim \lambda,\lambda_2}\lambda_2^{\delta_0-\frac{6\delta}{5}}\lambda_1^{-\frac{3}{2}-2\delta}c_{\lambda_1,\nu} c_{\lambda_2,\nu}\nonumber\\
  \lesssim&~ C\varepsilon_1\cdot C\varepsilon_0\lambda^{-\frac{3}{2}-2\delta}\nu^{-1-\frac{\delta}{3}} c_{\lambda,\nu},
\end{align}
Both bounds are acceptable.

For the second summation in \eqref{S1},  we apply the refined bound \eqref{refined} to $E(u_{\lambda_1,\nu'})$ to obtain
\begin{align*}
  &~ \sum_{\lambda_1 \ll\lambda\sim \lambda_2}\sum_{\nu'} \nu'^{1+\delta_0} \lambda^{-1} \lambda_{1 }^{\frac{3}{2}+\delta_0} \lambda_2E(u_{\lambda,\nu})^{\frac{1}{2}}E(u_{\lambda_1,\nu'})^{\frac{1}{2}}E(u_{\lambda_2,\nu})^{\frac{1}{2}} \\
  \lesssim&~ E(u_{\lambda,\nu})^{\frac{1}{2}}\sum_{\lambda_1 \ll\lambda\sim \lambda_2}\sum_{\nu'}  \nu'^{1+\delta_0} \lambda_{1}^{\frac{3}{2}+\delta_0} E(u_{\lambda_1,\nu'})^{\frac{1}{2}}E(u_{\lambda_2,\nu})^{\frac{1}{2}}
  \lesssim  C \varepsilon_0 E(u_{\lambda,\nu})^{\frac{1}{2}}\sum_{\lambda\sim \lambda_2}  E(u_{\lambda_2,\nu})^{\frac{1}{2}},
  \end{align*}
which is acceptable.

For the terms involving the $R_\nu$-commutator structure, we apply \eqref{4.35} and obtain
\begin{align}\label{S2}
  &~ \sum_{\lambda_{\text{max}}\sim \lambda_{\text{med}}}\sum_{\nu_1,\nu_2}\|(\pa u_{\lambda,\nu} + r_{y_0}^{-1} u_{\lambda,\nu})\cdot P_\lambda([R_\nu ,h^{i \alpha\beta}\pa_\beta u_{\lambda_1,\nu_1} \pa_i\pa_\alpha ] u_{\lambda_2,\nu_2})\|_{L^1_{t,x}}\notag\\
  \lesssim&~ \sum_{\lambda_{\text{max}}\sim \lambda_{\text{med}}}\sum_{\nu_1,\nu_2}\sup_{y_\alpha}\|((\pa u_{\lambda,\nu})^{y_1} + r_{y_0}^{-1} (u_{\lambda,\nu})^{y_1})\cdot [R_\nu ,h^{i\alpha\beta}\pa_\beta u_{\lambda_1,\nu_1} \pa_i \pa_\alpha ] u_{\lambda_2,\nu_2}\|_{L^1_{t,x}}\notag\\
  \lesssim&~ \sum_{\lambda_{\text{max}}\sim \lambda_{\text{med}}}\sum_{\nu_1\gtrsim \nu,\nu_2}E(u_{\lambda,\nu})^{\frac{1}{2}}\nu_{2}^{1+\delta_0}\lambda_{\text{min}}^{\frac{1}{2}+\delta_0}\lambda_2 E( u_{\lambda_1,\nu_1})^{\frac{1}{2}}E( u_{\lambda_2,\nu_2})^{\frac{1}{2}}\notag\\
  &~  + \sum_{\lambda_{\text{max}}\sim \lambda_{\text{med}}}\sum_{\nu_1\ll\nu_2\sim \nu}E(u_{\lambda,\nu})^{\frac{1}{2}}\nu^{-1}\nu_{1}^{2+\delta_0}\lambda_{\text{min}}^{\frac{1}{2}+\delta_0}\lambda_2 E( u_{\lambda_1,\nu_1})^{\frac{1}{2}}E( u_{\lambda_2,\nu_2})^{\frac{1}{2}},
\end{align}
where $\lambda_{\text{med}}$ is  the median of $\lambda,\lambda_1,\lambda_2$. 
Here we applied \eqref{4.35} together with the $L^1_{t,x}$ estimate \eqref{4.34}, and split the angular summation according to whether $\nu_1\gtrsim\nu$ or $\nu_1\ll\nu_2\sim\nu$.

We first consider the summation in \eqref{S2} involving the bound $CN \nu^{-2-\delta}\lambda^{-\frac{1}{2}-\delta}c_{\lambda,\nu}$. Applying \eqref{BA2} to $E(u_{\lambda_1,\nu_1})$ and \eqref{refined2} to $E( u_{\lambda_2,\nu_2})$, we bound the summation by
\begin{align}
  &~ E(u_{\lambda,\nu})^{\frac{1}{2}}C^2 \varepsilon_1 N \sum_{\lambda_{\text{max}}\sim \lambda_{\text{med}}}\sum_{\nu_1\gtrsim \nu,\nu_2}\nu_1^{-2-\delta}\nu_2^{\delta_0-\delta}\lambda_{\text{min}}^{\frac{1}{2}+\delta_0}\lambda_1^{-\frac{1}{2}-\delta}\lambda_2^{-\frac{1}{2}-\frac{6\delta}{5}}c_{\lambda_1,\nu_1}c_{\lambda_2,\nu_2}\notag\\
 + & ~ E(u_{\lambda,\nu})^{\frac{1}{2}}C^2 \varepsilon_1 N \sum_{\lambda_{\text{max}}\sim \lambda_{\text{med}}}\sum_{\nu_1\ll\nu_2\sim \nu} \nu^{-1}\nu_1^{\delta_0-\delta}\nu_2^{-1-\delta}\lambda_{\text{min}}^{\frac{1}{2}+\delta_0}\lambda_1^{-\frac{1}{2}-\delta}\lambda_2^{-\frac{1}{2}-\frac{6\delta}{5}}c_{\lambda_1,\nu_1}c_{\lambda_2,\nu_2}.
\end{align} 
In the case of $\lambda_2\gtrsim \lambda,\lambda_1$, we have
\begin{align}
  &~ \sum_{\lambda_2\gtrsim \lambda,\lambda_1}\sum_{\nu_1\gtrsim \nu,\nu_2} \nu_1^{-2-\delta}\nu_2^{\delta_0-\delta}\lambda_{\text{min}}^{\frac{1}{2}+\delta_0}\lambda_1^{-\frac{1}{2}-\delta}\lambda_2^{-\frac{1}{2}-\frac{6\delta}{5}}c_{\lambda_1,\nu_1}c_{\lambda_2,\nu_2}\notag\\
  & + \sum_{\lambda_2\gtrsim \lambda,\lambda_1}\sum_{\nu_1\ll\nu_2\sim \nu}\nu^{-1}\nu_1^{\delta_0-\delta}\nu_2^{-1-\delta}\lambda_{\text{min}}^{\frac{1}{2}+\delta_0}\lambda_1^{-\frac{1}{2}-\delta}\lambda_2^{-\frac{1}{2}-\frac{6\delta}{5}}c_{\lambda_1,\nu_1}c_{\lambda_2,\nu_2}.\notag\\
  \lesssim&~ \nu^{-2-\delta} \sum_{\lambda_2\gtrsim \lambda,\lambda_1}\lambda_1^{\delta_0-\delta}\lambda_2^{-\frac{1}{2}-\frac{6\delta}{5}}\Big(\sum_{\nu_1\gtrsim \nu,\nu_2}\big(\frac{\nu_1}{\nu}\big)^{-2-\delta} \nu_2^{\delta_0-\delta}\big(\frac{\lambda_2}{\lambda_1}\big)^{\delta_0^2}c_{\lambda_1,\nu_1}c_{\lambda_2,\nu_2} + \sum_{\nu_1\ll\nu_2\sim \nu}\nu_1^{\delta_0-\delta}c_{\lambda_1,\nu_1}c_{\lambda_2,\nu_2}\Big)\nonumber\\
  \lesssim&~ \nu^{-2-\delta}\lambda^{-\frac{1}{2}-\delta}c_{\lambda,\nu},
\end{align}
which is acceptable. 

In the case of $\lambda_2\ll \lambda_1\sim  \lambda$ and $\nu_1\gtrsim \nu,\nu_2$, we have
\begin{align}
  &~ \sum_{\lambda_2\ll \lambda_1\sim  \lambda}\sum_{\nu_1\gtrsim \nu,\nu_2}\nu_{\text{min}}^{1+\delta_0}\nu_1^{-2-\delta}\nu_2^{-1-\delta}\lambda_{\text{min}}^{\frac{1}{2}+\delta_0}\lambda_1^{-\frac{1}{2}-\delta}\lambda_2^{-\frac{1}{2}-\frac{6\delta}{5}}c_{\lambda_1,\nu_1}c_{\lambda_2,\nu_2}\notag\\
  \lesssim&~ \nu^{-2-\delta} \lambda^{-\frac{1}{2}-\delta}\sum_{\lambda_2\ll \lambda_1\sim  \lambda}\sum_{\nu_1\gtrsim \nu,\nu_2}\lambda_2^{\delta_0 - \frac{6\delta}{5}}\big(\frac{\nu_1}{\nu}\big)^{-2-\delta}\nu_2^{\delta_0-\delta}c_{\lambda_1,\nu_1}c_{\lambda_2,\nu_2}\nonumber\\
  \lesssim&~ \nu^{-2-\delta}\lambda^{-\frac{1}{2}-\delta}c_{\lambda,\nu},
\end{align}
which is also acceptable.

The remaining case is $\lambda_2\ll \lambda_1\sim  \lambda$ and $\nu_1\ll \nu_2\sim \nu$. In this case we again interpolate the bounds \eqref{BA2} and \eqref{refined} at the scale $(\lambda_1,\nu_1)$ with the weight $\theta=\frac{3\delta}{4+2\delta}$ to obtain
\begin{equation}
  \begin{aligned}
  &~ E(u_{\lambda_1,\nu_1})^{\frac{1}{2}} \le (C\varepsilon_0 \lambda_1^{-\frac{3}{2}-2\delta}\nu_1^{-1-\frac{\delta}{3}} c_{\lambda_1,\nu_1})^{\frac{3\delta}{4+2\delta}}  (CN \nu_1^{-2-\delta}\lambda_1^{-\frac{1}{2}-\delta}c_{\lambda_1,\nu_1})^{1-\frac{3\delta}{4+2\delta}} \\
  \lesssim&~ C \big(\frac{M}{N}\big)^{\frac{3\delta}{4(2+\delta)}} N \nu_1^{-2-\frac{\delta}{5}}\lambda_1^{-\frac{1}{2}-\frac{5\delta}{4}}c_{\lambda_1,\nu_1}.
   \end{aligned} 
\end{equation}

 Applying the above bound to $E(u_{\lambda_1,\nu_1})$ and \eqref{refined2} to $E(u_{\lambda_2,\nu_2})$, we have 
\begin{align}
  &~ \sum_{\lambda_2\ll \lambda_1\sim  \lambda}\sum_{\nu_1\ll \nu_2\sim \nu}E(u_{\lambda,\nu})^{\frac{1}{2}}\nu_{\text{min}}^{1+\delta_0}\lambda_{\text{min}}^{\frac{1}{2}+\delta_0}\lambda_2 E( u_{\lambda_1,\nu_1})^{\frac{1}{2}}E( u_{\lambda_2,\nu_2})^{\frac{1}{2}}\nonumber\\
  \lesssim&~ E(u_{\lambda,\nu})^{\frac{1}{2}}C^2 [\big(\frac{M}{N}\big)^{\frac{3\delta}{4(2+\delta)}} \varepsilon_1] N \sum_{\lambda_2\ll \lambda_1\sim  \lambda}\sum_{\nu_1\ll \nu_2\sim \nu}\nu_{\text{min}}^{1+\delta_0}\nu^{-1}\nu_1^{-1-\frac{\delta}{5}}\nu_2^{-1-\delta}\lambda_{\text{min}}^{\frac{1}{2}+\delta_0}\lambda_1^{-\frac{1}{2}-\frac{5\delta}{4}}\lambda_2^{-\frac{1}{2}-\frac{6\delta}{5}}c_{\lambda_1,\nu_1}c_{\lambda_2,\nu_2}\notag\\
  \lesssim&~ E(u_{\lambda,\nu})^{\frac{1}{2}}C^2 \varepsilon_0 N \sum_{\lambda_2\ll \lambda_1\sim  \lambda}\sum_{\nu_1\ll \nu_2\sim \nu}\nu^{-1}\nu_1^{\delta_0-\frac{\delta}{5}}\nu_2^{-1-\delta}\lambda_1^{-\frac{1}{2}-\frac{5\delta}{4}}\lambda_2^{\delta_0-\frac{6\delta}{5}}c_{\lambda_1,\nu_1}\cdot \big(\frac{\lambda}{\lambda_2}\big)^{\delta_0^2}c_{\lambda,\nu_2}\notag\\
  \lesssim&~ E(u_{\lambda,\nu})^{\frac{1}{2}} C\varepsilon_0\cdot  CN\lambda^{-\frac{1}{2}-\delta}\nu^{-1}\sum_{\nu_2\sim  \nu}\nu_2^{-1-\delta}c_{\lambda,\nu_2}\nonumber\\
  \lesssim&~ C \varepsilon_0 E(u_{\lambda,\nu})^{\frac{1}{2}} \cdot C N \lambda^{-\frac{1}{2}-\delta}\nu^{-2-\delta}c_{\lambda,\nu},
\end{align}
which is acceptable.

It remains to control the summation in \eqref{S2} with the bound $C\varepsilon_0 \lambda^{-\frac{3}{2}-2\delta}\nu^{-1-\frac{\delta}{3}}$.

We begin with the case $\lambda_1\gtrsim \lambda,\lambda_2$, where we apply \eqref{refined} to  $E(u_{\lambda_1,\nu_1})$ and \eqref{refined2} to $E(u_{\lambda_2,\nu_2})$, and obtain
\begin{align}
  &~ E(u_{\lambda,\nu})^{\frac{1}{2}}\sum_{\lambda_1\gtrsim \lambda,\lambda_2}\sum_{\nu_1\gtrsim \nu,\nu_2}\nu_{2}^{1+\delta_0}\lambda_{\text{min}}^{\frac{1}{2}+\delta_0}\lambda_2 E( u_{\lambda_1,\nu_1})^{\frac{1}{2}}E( u_{\lambda_2,\nu_2})^{\frac{1}{2}}\notag\\
  & + E(u_{\lambda,\nu})^{\frac{1}{2}}\sum_{\lambda_1\gtrsim \lambda,\lambda_2}\sum_{\nu_1\ll\nu_2\sim \nu}\nu^{-1} \nu_{1}^{2+\delta_0}\lambda_{\text{min}}^{\frac{1}{2}+\delta_0}\lambda_2 E( u_{\lambda_1,\nu_1})^{\frac{1}{2}}E( u_{\lambda_2,\nu_2})^{\frac{1}{2}},\nonumber\\
  \lesssim&~ E(u_{\lambda,\nu})^{\frac{1}{2}}\sum_{\lambda_1\gtrsim \lambda,\lambda_2}\sum_{\nu_1\gtrsim \nu,\nu_2}\nu_{2}^{1+\delta_0}\lambda_{2}^{\frac{3}{2}+\delta_0} (C\varepsilon_0 \lambda_1^{-\frac{3}{2}-2\delta}\nu_1^{-1-\frac{\delta}{3}} c_{\lambda_1,\nu_1})(C\varepsilon_1 \lambda_2^{-\frac{3}{2}-\frac{6\delta}{5}}\nu_2^{-1-\delta}c_{\lambda_2,\nu_2})\notag\\
  & + E(u_{\lambda,\nu})^{\frac{1}{2}}\sum_{\lambda_1\gtrsim \lambda,\lambda_2}\sum_{\nu_1\ll\nu_2\sim \nu}\nu^{-1} \nu_{1}^{2+\delta_0}\lambda_{2}^{\frac{3}{2}+\delta_0} (C\varepsilon_0 \lambda_1^{-\frac{3}{2}-2\delta}\nu_1^{-1-\frac{\delta}{3}} c_{\lambda_1,\nu_1})(C\varepsilon_1 \lambda_2^{-\frac{3}{2}-\frac{6\delta}{5}}\nu_2^{-1-\delta}c_{\lambda_2,\nu_2}),\nonumber\\
  \lesssim&~ E(u_{\lambda,\nu})^{\frac{1}{2}}C^2 \varepsilon_0\varepsilon_1\sum_{\lambda_1\gtrsim \lambda,\lambda_2}\sum_{\nu_1\gtrsim \nu,\nu_2}\nu_{2}^{\delta_0-\delta}\lambda_2^{\delta_0-\frac{6\delta}{5}}\lambda_1^{-\frac{3}{2}-2\delta} \nu_1^{-1-\frac{\delta}{3}} c_{\lambda_1,\nu_1} c_{\lambda_2,\nu_2}\notag\\
  & + C^{2}\varepsilon_0\varepsilon_1\sum_{\lambda_1\gtrsim \lambda,\lambda_2}\sum_{\nu_1\ll\nu_2\sim \nu}\nu^{-2-\delta} \nu_{1}^{1+\delta_0-\frac{\delta}{3}}\lambda_{2}^{\delta_0-\frac{6\delta}{5}}\lambda_2  \lambda_1^{-\frac{3}{2}-2\delta} \big(\frac{\nu}{\nu_1}\big)^{\delta_0^2} c_{\lambda_1,\nu} c_{\lambda_2,\nu_2} ,\nonumber\\
  \lesssim&~ C\varepsilon_1E(u_{\lambda,\nu})^{\frac{1}{2}}\cdot C\varepsilon_0 \lambda^{-\frac{3}{2}-2\delta}\nu^{-1-\frac{\delta}{3}}c_{\lambda,\nu}.
\end{align}

The remaining case is $\lambda_1\ll \lambda_2\sim \lambda$. Here we use \eqref{4.29} to interpolate the bounds \eqref{BA1} and \eqref{BA2} for $E(u_{\lambda_2,\nu_2})$. Setting $s = \frac{3\delta}{4(2+\delta)}$, we obtain
\begin{align}
  &~ E(u_{\lambda_2,\nu_2})^{\frac{s}{2}}E(u_{\lambda_2})^{\frac{1-s}{2}}\lesssim (CN\nu_2^{-2-\delta}\lambda_2^{-\frac{1}{2}-\delta}c_{\lambda_2,\nu_2})^{\frac{3\delta}{4(2+\delta)}}(CM \lambda^{-\frac{5}{2}-3\delta}c_{\lambda_2})^{1-\frac{3\delta}{4(2+\delta)}}\nonumber\\
  \lesssim&~ C M^{1-\frac{3\delta}{4(2+\delta)}}N^{\frac{3\delta}{4(2+\delta)}}\nu_2^{-\frac{3\delta}{4}}\lambda_2^{-\frac{5}{2}-2\delta}c_{\lambda_2,\nu_2}c_{\lambda_2}\nonumber\\
  \lesssim&~ C M^{1-\frac{3\delta}{4(2+\delta)}}N^{\frac{3\delta}{4(2+\delta)}}\nu_2^{-\frac{2\delta}{3}}\lambda_2^{-\frac{5}{2}-2\delta}c_{\lambda_2,\nu_2}.
\end{align}
The last line follows from the envelope relation $\nu^{-\delta_0^2}c_\lambda\le c_{\lambda,\nu}$.
Together with the bound \eqref{BA2} of $E(u_{\lambda_1,\nu_1})$, we have 
\begin{align}
  &~ \sum_{\lambda_1\ll\lambda_2\sim \lambda}\sum_{\nu_{\text{max}}\gtrsim \nu}\sup_{y_\alpha}\|((\pa u_{\lambda,\nu})^{y_1} + r_{y_0}^{-1} (u_{\lambda,\nu})^{y_1})\cdot [R_\nu ,h^{i\alpha\beta}\pa_\beta u_{\lambda_1,\nu_1} \pa_i \pa_\alpha ] u_{\lambda_2,\nu_2}\|_{L^1_{t,x}}\notag\\
  \lesssim&~ E(u_{\lambda,\nu})^{\frac{1}{2}}\sum_{\lambda_1\ll\lambda_2\sim \lambda}\sum_{\nu_1\gtrsim \nu,\nu_2}\nu_{2}^{1+\delta_0}\lambda_{\text{min}}^{\frac{1}{2}+\delta_0}\lambda_2 E( u_{\lambda_1,\nu_1})^{\frac{1}{2}}E(u_{\lambda_2,\nu_2})^{\frac{s}{2}}E(u_{\lambda_2})^{\frac{1-s}{2}}\nonumber\\
  &  + E(u_{\lambda,\nu})^{\frac{1}{2}}\sum_{\lambda_1\ll\lambda_2\sim \lambda}\sum_{\nu_1\ll\nu_2\sim \nu}  \nu^{-1} \nu_{1}^{2+\delta_0} \lambda_{\rm{\min}}^{\frac{1}{2}+\delta_0} \lambda_2 E( u_{\lambda_1,\nu_1})^{\frac{1}{2}}E(u_{\lambda_2,\nu_2})^{\frac{s}{2}}E(u_{\lambda_2})^{\frac{1-s}{2}}\nonumber\\
  \lesssim&~ E(u_{\lambda,\nu})^{\frac{1}{2}}C^2 M^{1-\frac{3\delta}{4(2+\delta)}}N^{1+\frac{3\delta}{4(2+\delta)}} \Big(\sum_{\lambda_1\ll\lambda_2\sim \lambda}\sum_{\nu_1\gtrsim \nu,\nu_2 } \nu_2^{1+\delta_0 -\frac{2\delta}{3}}\nu_1^{-2-\delta}\lambda_1^{\delta_0-\delta}\lambda_2^{-\frac{3}{2}-2\delta}c_{\lambda_1,\nu_1}c_{\lambda_2,\nu_2}\nonumber\\
  & + \sum_{\lambda_1\ll\lambda_2\sim \lambda}\sum_{\nu_1\ll\nu_2\sim \nu}  \nu^{-1}\nu_1^{\delta_0-\delta}\lambda_1^{\delta_0-\delta}\nu_2^{-\frac{2\delta}{3}}\lambda_2^{-\frac{3}{2}-2\delta}c_{\lambda_1,\nu_1}c_{\lambda_2,\nu_2}\Big)\nonumber\\
  \lesssim&~ E(u_{\lambda,\nu})^{\frac{1}{2}} C^2\varepsilon_0\varepsilon_1\nu^{-1-\frac{\delta}{3}}\lambda^{-\frac{3}{2}-2\delta}\Big(\sum_{\lambda_2\sim \lambda}\Big(\nu^{-\frac{\delta}{3}}\sum_{\nu_2\lesssim\nu} \big(\frac{\nu}{\nu_2}\big)^{\delta_0^2}c_{\lambda_2,\nu} + \sum_{\nu_2\gtrsim \nu}\nu_2^{-\frac{\delta}{3}}c_{\lambda_2,\nu_2}\Big)+ \sum_{\lambda_2\sim \lambda}\sum_{\nu_2\sim \nu}c_{\lambda_2,\nu_2}\Big)\nonumber\\
  \lesssim&~ C \varepsilon_1E(u_{\lambda,\nu})^{\frac{1}{2}}\cdot C\varepsilon_0\nu^{-1-\frac{\delta}{3}}\lambda^{-\frac{3}{2}-2\delta}c_{\lambda,\nu}.
\end{align} 

\textbf{(C). Estimates for \eqref{u3} and \eqref{u4}. } We claim that the estimates in the present case are identical to the previous ones. We write 
\begin{equation}
  v_{\lambda} := \pa_t u_\lambda,\ v_{\lambda,\nu} := \pa_t u_{\lambda,\nu}.
\end{equation} 

Up to a translation, we write the source terms as   
\begin{align}
&~ |\mathbf{F}_e(v^{y_0}_\lambda)|,|\mathbf{F}_m(v^{y_0}_\lambda)| \label{non}\\
\sim&~  |(\pa v_\lambda + r_{y_0}^{-1} v_\lambda)\cdot [P_\lambda,\bar{h}^{\alpha \beta} ]\pa_\alpha \pa_\beta v|  + |(\pa v_\lambda + r_{y_0}^{-1} v_\lambda)\cdot G(\pa u,v_\lambda)| , \notag\\
& + |(\pa v_\lambda + r_{y_0}^{-1} v_\lambda)\cdot P_\lambda(\pa_t\bar{h}^{\alpha \beta}\pa_{\alpha}\pa_{\beta} u)|,\notag  
\end{align}
and 
\begin{align}
  &~|\mathbf{F}_e((v_{\lambda,\nu})^{y_0})|,|\mathbf{F}_m((v_{\lambda,\nu})^{y_0})|\\
   \sim&~  |(\pa v_{\lambda,\nu}+ r_{y_0}^{-1} v_{\lambda,\nu})\cdot [R_\nu P_\lambda,\bar{h}^{\alpha \beta} \pa_\alpha \pa_\beta] v|    +  |(\pa v_{\lambda,\nu} + r_{y_0}^{-1} v_{\lambda,\nu})\cdot G(\pa u,v_{\lambda,\nu})| \notag\\
  &~   + |(\pa v_{\lambda,\nu} + r_{y_0}^{-1} v_{\lambda,\nu})\cdot R_\nu P_\lambda(\pa_t\bar{h}^{\alpha \beta}\pa_{\alpha}\pa_{\beta} u)|,\notag
\end{align}
Therefore, the estimates for \eqref{u3} and \eqref{u4} are identical to those for \eqref{u1} and \eqref{u2}, except for the following low-order terms
\begin{align}
  &~ |(\pa v_\lambda + r_{y_0}^{-1} v_\lambda)\cdot P_\lambda(\pa_t\bar{h}^{\alpha \beta}\pa_{\alpha}\pa_{\beta} u)| \sim  \sum_{\lambda_{\max}\sim  \lambda_{\text{med}}} |(\pa v_\lambda + r_{y_0}^{-1} v_\lambda)\cdot G(\pa_x u_{\lambda_1}, \pa_t u_{\lambda_2})|,\\
  &~ |((\pa v_{\lambda,\nu}) + r_{y_0}^{-1} v_{\lambda,\nu})\cdot R_\nu P_\lambda(\pa_t\bar{h}^{\alpha \beta}\pa_{\alpha}\pa_{\beta} u)|\notag\\
  &~ \quad\quad\quad \sim \sum_{\lambda_{\max}\sim  \lambda_{\text{med}}}\sum_{\nu_{\max}\sim  \nu_{\text{med}}} |(\pa v_{\lambda,\nu} + r_{y_0}^{-1} v_{\lambda,\nu})\cdot G(\pa_x u_{\lambda_1,\nu_1}, \pa_t u_{\lambda_2,\nu_2})|,
\end{align}
Here we used $\pa_t\bar{h}^{\alpha\beta}=-h^{i\alpha\beta}\pa_t\pa_\beta u$ and \eqref{r1} to reduce these terms to the null form $G$; they are the only new contributions compared with \eqref{u1} and \eqref{u2}.
For $v_\lambda$ we have the following bounds:
\begin{align}
&~ \sum_{\lambda_{\max}\sim \lambda_{\text{med}}} \|(\pa v_\lambda + r_{y_0}^{-1} v_\lambda)\cdot G(\pa_x u_{\lambda_1}, \pa_t u_{\lambda_2})\|_{L^1_{t,x}}\notag\\
\lesssim&~ \sum_{\lambda_1\gtrsim \lambda} \sum_{\lambda_2,\nu_2} E(\pa_t u_{\lambda,\nu})^{\frac{1}{2}} (\lambda_{2}^{\frac{1}{2}+\delta_0}\nu_{2}^{1+\delta_0})\lambda_1 E(u_{\lambda_1})^{\frac{1}{2}}E(\pa_t u_{\lambda_2,\nu_2})^{\frac{1}{2}}\notag\\
&   + \sum_{\lambda_2\gtrsim \lambda} \sum_{\lambda_1,\nu_1} E(\pa_t u_{\lambda,\nu})^{\frac{1}{2}} (\lambda_{1}^{\frac{1}{2}+\delta_0}\nu_{1}^{1+\delta_0})\lambda_1 E(u_{\lambda_1,\nu_1})^{\frac{1}{2}}E(\pa_t u_{\lambda_2})^{\frac{1}{2}}\notag\\
\lesssim&~ C\varepsilon_0 E(\pa_t u_{\lambda,\nu})^{\frac{1}{2}} \sum_{\lambda'\gtrsim \lambda}(\lambda' E(u_{\lambda'})^{\frac{1}{2}}+E(\pa_t u_{\lambda'})^{\frac{1}{2}}),
\end{align}
where we apply \eqref{refined} to the low frequency terms. 

For $v_{\lambda,\nu}$ we proceed similarly:
\begin{align}\label{S3}
  &~ \sum_{\lambda_{\max}\sim \lambda_{\text{med}}}\sum_{\nu_{\max}\sim \nu_{\text{med}}} \|(\pa v_{\lambda,\nu} + r_{y_0}^{-1} v_{\lambda,\nu})\cdot G(\pa_x u_{\lambda_1,\nu_1}, \pa_t u_{\lambda_2,\nu_2})\|_{L^1_{t,x}}\notag\\
  \lesssim&~ \sum_{\lambda_{\max}\sim \lambda_{\text{med}}}\sum_{\nu_{\max}\sim \nu_{\text{med}}}\nu_{\text{min}}^{1+\delta_0}\lambda_{\text{min}}^{\frac{1}{2}+\delta_0}\lambda_1 E(\pa_t u_{\lambda,\nu})^{\frac{1}{2}} E(u_{\lambda_1,\nu_1})^{\frac{1}{2}} E(\pa_t u_{\lambda_2,\nu_2})^{\frac{1}{2}}.
\end{align}
Here we used \eqref{4.34} with the outer factor $(\pa v_{\lambda,\nu}+r_{y_0}^{-1}v_{\lambda,\nu})$ and \eqref{re} for $\pa_x u_{\lambda_1,\nu_1}$.
We first consider the case $\lambda_2 \gtrsim \lambda, \lambda_1$. If $\nu_1 \ll \nu_2\sim \nu$, we directly apply \eqref{refined} to $E(u_{\lambda_1,\nu_1})$ and  $E(\pa_t u_{\lambda_2,\nu_2})$. If $\nu_1\gtrsim \nu,\nu_2$, we apply \eqref{refined2} to $E(u_{\lambda_1,\nu_1})$ and \eqref{refined} to $E(\pa_t u_{\lambda_2,\nu_2})$. Then the summation in \eqref{S3}  is bounded by
\begin{align}
  &~ E(\pa_t u_{\lambda,\nu})^{\frac{1}{2}} \sum_{\lambda_2 \gtrsim \lambda, \lambda_1}\sum_{\nu_1 \ll \nu_2\sim \nu}\nu_{1}^{1+\delta_0}\lambda_{1}^{\frac{3}{2}+\delta_0}E(u_{\lambda_1,\nu_1})^{\frac{1}{2}}   E(\pa_t u_{\lambda_2,\nu_2})^{\frac{1}{2}}\nonumber\\
  &  +E(\pa_t u_{\lambda,\nu})^{\frac{1}{2}}\sum_{\lambda_2 \gtrsim \lambda, \lambda_1}\sum_{\nu_1 \gtrsim \nu, \nu_2}\nu_{2}^{1+\delta_0}\lambda_{1}^{\frac{3}{2}+\delta_0}   E(u_{\lambda_1,\nu_1})^{\frac{1}{2}} E(\pa_t u_{\lambda_2,\nu_2})^{\frac{1}{2}}\nonumber\\
  \lesssim&~ E(\pa_t u_{\lambda,\nu})^{\frac{1}{2}} C\varepsilon_0 \sum_{\lambda_2\gtrsim \lambda}\sum_{\nu_2\sim \nu}E(\pa_t u_{\lambda_2,\nu_2})^{\frac{1}{2}}\nonumber\\
  &  +E(\pa_t u_{\lambda,\nu})^{\frac{1}{2}} C^2 \varepsilon_0\varepsilon_1\sum_{\lambda_2\gtrsim \lambda,\lambda_1}\sum_{\nu_1\gtrsim\nu,\nu_2} \nu_2^{\delta_0-\frac{\delta}{3}}\lambda_1^{\delta_0-\frac{6\delta}{5}}\nu_1^{-1-\delta}\lambda_2^{-\frac{1}{2}-\delta}c_{\lambda_1,\nu_1}c_{\lambda_2,\nu_2}\nonumber\\
  \lesssim&~ E(\pa_t u_{\lambda,\nu})^{\frac{1}{2}} C\varepsilon_0 \sum_{\lambda_2\gtrsim \lambda}\sum_{\nu_2\sim \nu}E(\pa_t u_{\lambda_2,\nu_2})^{\frac{1}{2}}\nonumber\\
  &  +E(\pa_t u_{\lambda,\nu})^{\frac{1}{2}} C^2 \varepsilon_0\varepsilon_1\sum_{\lambda_2\gtrsim \lambda,\lambda_1}\sum_{\nu_1\gtrsim\nu}\lambda_1^{\delta_0-\frac{6\delta}{5}}\nu_1^{-1-\delta}\lambda_2^{-\frac{1}{2}-\delta}c_{\lambda_1,\nu_1}\Big(\sum_{\nu_2\gtrsim \nu} \nu_2^{\delta_0-\frac{\delta}{3}}c_{\lambda_2,\nu_2}+ \sum_{\nu_2\ll\nu} \big(\frac{\nu}{\nu_2}\big)^{\delta_0^2}c_{\lambda_2,\nu}\Big)\nonumber\\
  \lesssim&~ E(\pa_t u_{\lambda,\nu})^{\frac{1}{2}} C\varepsilon_0 \sum_{\lambda_2\gtrsim \lambda}\sum_{\nu_2\sim \nu}E(\pa_t u_{\lambda_2,\nu_2})^{\frac{1}{2}} + C \varepsilon_1  E(\pa_t u_{\lambda,\nu})^{\frac{1}{2}} \cdot C\varepsilon_0 \lambda^{-\frac{1}{2}-\delta} \nu^{-1-\frac{\delta}{3}} c_{\lambda,\nu},
\end{align}
which is acceptable. 

Next we consider the case $\lambda_2\ll \lambda_1\sim \lambda$.  We directly apply \eqref{refined} to $E(u_{\lambda_1,\nu_1})$ and  $E(\pa_t u_{\lambda_2,\nu_2})$ and obtain
\begin{align}
  &~ E(\pa_t u_{\lambda,\nu})^{\frac{1}{2}} \sum_{\lambda_2 \ll \lambda_1\sim  \lambda}\sum_{\nu_1 \gtrsim \nu, \nu_2}\nu_{2}^{1+\delta_0}\lambda_1\lambda_{2}^{\frac{1}{2}+\delta_0}   E(u_{\lambda_1,\nu_1})^{\frac{1}{2}} E(\pa_t u_{\lambda_2,\nu_2})^{\frac{1}{2}}\nonumber\\
  & +E(\pa_t u_{\lambda,\nu})^{\frac{1}{2}}\sum_{\lambda_2 \ll \lambda_1\sim  \lambda}\sum_{\nu_1 \ll \nu_2\sim \nu}\nu_{1}^{1+\delta_0}\lambda_1\lambda_{2}^{\frac{1}{2}+\delta_0}E(u_{\lambda_1,\nu_1})^{\frac{1}{2}}   E(\pa_t u_{\lambda_2,\nu_2})^{\frac{1}{2}}\nonumber\\
  \lesssim&~ E(\pa_t u_{\lambda,\nu})^{\frac{1}{2}} C\varepsilon_0 \sum_{\lambda_1\sim  \lambda}\sum_{\nu_1\gtrsim \nu}\lambda_1 E( u_{\lambda_1,\nu_1})^{\frac{1}{2}}\nonumber\\
  & +E(\pa_t u_{\lambda,\nu})^{\frac{1}{2}} C^2 \varepsilon_0\varepsilon_1\sum_{\lambda_2 \ll \lambda_1\sim  \lambda}\sum_{\nu_1\ll\nu_2\sim \nu} \nu_1^{\delta_0-\frac{\delta}{3}}\nu_2^{-1-\frac{\delta}{3}}\lambda_1^{-\frac{1}{2}-2\delta}\lambda_2^{\delta_0-\delta}c_{\lambda_1,\nu_1}\cdot \big(\frac{\lambda}{\lambda_2}\big)^{\delta_0^2}c_{\lambda,\nu_2}\nonumber\\
  \lesssim&~ E(\pa_t u_{\lambda,\nu})^{\frac{1}{2}} C\varepsilon_0 \sum_{\lambda_1\sim  \lambda}\sum_{\nu_1\gtrsim \nu}\lambda_1 E( u_{\lambda_1,\nu_1})^{\frac{1}{2}}+E(\pa_t u_{\lambda,\nu})^{\frac{1}{2}} C^2 \varepsilon_0\varepsilon_1\lambda^{-\frac{1}{2}-\delta}\sum_{ \nu_2\sim \nu} \nu_2^{-1-\frac{\delta}{3}}   c_{\lambda ,\nu_2}\nonumber\\
  \lesssim&~ E(\pa_t u_{\lambda,\nu})^{\frac{1}{2}} C\varepsilon_0 \sum_{\lambda_2\gtrsim \lambda}E(\pa_t u_{\lambda_2,\nu_2})^{\frac{1}{2}} + C \varepsilon_1  E(\pa_t u_{\lambda,\nu})^{\frac{1}{2}} \cdot C\varepsilon_0 \lambda^{-\frac{1}{2}-\delta} \nu^{-1-\frac{\delta}{3}} c_{\lambda,\nu},
\end{align}
which is also acceptable.

Combining the above estimates, we complete the proof.
\end{proof}

Now we are ready to prove Theorem \ref{bound}. By the definition of $E(v)$ and Proposition \ref{Source}, we have 
\begin{align}
  &~ E(u_\lambda)^{\frac{1}{2}},\lambda^{-1}E(\pa_t u_\lambda)^{\frac{1}{2}}\lesssim (C^{\frac{3}{2}}\varepsilon^{\frac{1}{2}}+1)  M  \lambda^{-\frac{5}{2}-3\delta}c_\lambda, \\
  &~ E(u_{\lambda,\nu})^{\frac{1}{2}} \lesssim (C^{\frac{3}{2}}\varepsilon^{\frac{1}{2}}+1)  N  \lambda^{-\frac{1}{2}-\delta} \nu^{-2-\delta}c_{\lambda,\nu},\\
  &~ E(u_{\lambda,\nu})^{\frac{1}{2}}\lesssim (C^{\frac{3}{2}}\varepsilon^{\frac{1}{2}}+1) \varepsilon_0 \lambda^{-\frac{3}{2}-2\delta}\nu^{-1-\frac{\delta}{3}} c_{\lambda,\nu},\\
  &~ \lambda^{-1}E(\pa_t  u_{\lambda,\nu})^{\frac{1}{2}}\lesssim (C^{\frac{3}{2}}\varepsilon^{\frac{1}{2}}+1) \varepsilon_0 \lambda^{-\frac{3}{2}-\delta}\nu^{-1-\frac{\delta}{3}} c_{\lambda,\nu}.
\end{align}
Now we choose $C =\varepsilon^{-\frac{1}{8}}$. It is clear that such a constant meets the smallness assumption \eqref{sm}, since $C\varepsilon = \varepsilon^{\frac{7}{8}}\ll 1 $. Moreover, we have  
\begin{equation}
  (C^{\frac{3}{2}}\varepsilon^{\frac{1}{2}}+1)\lesssim 1 \ll C .
\end{equation}
This improves the constants in \eqref{BA1}, \eqref{BA2} and \eqref{refined}, which closes the bootstrap argument.
\subsection{Proof of Theorem \ref{main}} Since $s>s_c +1$, the local well-posedness of the Cauchy problem \eqref{quasi} is ensured by the classical theory. We can then apply Theorem \ref{bound} and extend this unique local solution to a unique global solution.

It remains to prove the Lipschitz dependence \eqref{lip}. Given two different solutions $u$ and $w$ as in Theorem \ref{main}, we have the difference equation 
\begin{equation}\begin{cases}
 \Box (u-w) = h^{i \alpha \beta }\pa_i\pa_{\alpha}(u-w)\cdot\pa_{\beta} u  + h^{i \alpha \beta }\pa_i\pa_{\alpha}w\cdot\pa_{\beta} (u-w), &\text{in} \mathbb{R}_+ \times \mathbb{R}^3\\
 \pa(u-w)[0] = \pa u[0]- \pa w[0],  & \text{ on}\ \mathbb{R}^3, \ \\
\end{cases}
\end{equation}
Its initial data also satisfy the condition in Theorem \ref{main}. 
The equation for $(u-w)_\lambda$ reads
\begin{equation}
  \Box_{\bar{g}} (u-w)  = [P_\lambda, \bar{h}^{\alpha \beta}\pa_\alpha \pa_\beta](u-w) + P_{\lambda} G(\pa_x w, u-w),
\end{equation}
Here we used the exact reduction $\Box_{\bar{g}}(u-w)=h^{i\alpha\beta}\pa_i\pa_\alpha w\pa_\beta(u-w)=G(\pa_x w,u-w)$.
It therefore suffices to control the energy functional $E((u-w)_\lambda)$. We again make the following bootstrap assumption:
\begin{equation}
  E((u-w)_\lambda)^{\frac{1}{2}} \le  C M' \lambda^{-\frac{3}{2}-3\delta} d_\lambda ,
\end{equation}
where 
\begin{equation}
  \begin{aligned}
     &~ M' = \|\pa u[0]-\pa w[0]\|_{H^{\frac{3}{2}+3\delta}},\\
     &~ d_\lambda = \sup_{\lambda'} e^{-\delta^2  |\ln \lambda- \ln \lambda'|}  (M')^{-1}\| P_{\lambda'} \pa u[0]-\pa w[0]\|_{H^{\frac{3}{2}+3\delta}}  + c_\lambda +\big(\sum_{\nu} c_{\lambda,\nu}^2\big)^{\frac{1}{2}}.\\
   \end{aligned}
\end{equation}

Up to a translation, we rewrite the nonlinearity as 
\begin{align}
  &~ |\mathbf{F}_e [(u-w)_\lambda^{y_0}]|, |\mathbf{F}_m [(u-w)_\lambda^{y_0}]|\\
   \sim&~ |(\pa (u-w)_\lambda + r_{y_0}^{-1} (u-w)_\lambda)\cdot [P_\lambda,\bar{h}^{\alpha \beta} ]\pa_\alpha \pa_\beta (u-w)| \notag\\
&  + |(\pa (u-w)_\lambda + r_{y_0}^{-1} (u-w)_\lambda)\cdot G(\pa u,(u-w)_\lambda)|  \notag\\
&  + |(\pa (u-w)_\lambda + r_{y_0}^{-1} (u-w)_\lambda)\cdot P_\lambda G(\pa_x w, u-w)|,\notag
\end{align}
which is almost the same as \eqref{non}. Therefore, repeating the procedure of proving \eqref{u3}, we have 
\begin{equation}
\|\mathbf{F}[(u-w)_\lambda]\|_{L^1_{t,x}}\lesssim  C \varepsilon (C M' \lambda^{-\frac{3}{2}-3\delta}d_\lambda)^2,
\end{equation}
which gives
\begin{equation}
  E((u-w)_\lambda)^{\frac{1}{2}}  \le (C^{\frac{3}{2}}\varepsilon^{\frac{1}{2}}+1)  M'  \lambda^{-\frac{3}{2}-3\delta}d_\lambda.
\end{equation}
Choosing $C$ as before, we complete the proof of the Lipschitz continuous dependence.
\appendix
\section{Proof of Lemma \ref{ancom}}
This appendix is devoted to the proof of the $R_\nu$-commutator estimate, which essentially repeats the arguments in \cite[Appendix A.1]{guo_Global_2023}.

Denoting by $P_n^{(a,b)}$ the Jacobi polynomials, recall that 
\begin{equation}
 P^{(0,0)}_n(x)=L_n(z)=\frac{1}{2^n n!}\frac{\mathrm{d}^n}{\mathrm{d}x^n}[(x^2-1)^n],
\end{equation}
and therefore
\begin{equation}\label{A2}
  (2n+1)P^{(0,0)}_n(x)=\frac{\mathrm{d}}{\mathrm{d}x}(P_{n+1}^{(0,0)}(x)-P_{n-1}^{(0,0)}(x) ).
\end{equation}
We recall the following differentiation formula (see \cite[Section 4.5]{szego_Orthogonal_1939})
\begin{equation}\label{DerJacobi}
 \frac{\mathrm{d}}{\mathrm{d}x}P^{(a,a)}_n=\frac{n+2a+1}{2}P^{(a+1,a+1)}_{n-1},\quad \forall a\in \mathbb{N}.
\end{equation}

Moreover, we have the following asymptotics (see \cite[Section 8.21]{szego_Orthogonal_1939}):
\begin{lemma}\label{LemZonal0}
Fix $0<c<\pi$, then 
\begin{equation}\label{SogForm}
  \begin{aligned}
 &~ P^{(a,a)}_n(\cos\theta)= \\
  &\begin{cases}2^a\sqrt{\frac{2}{\pi}}\frac{1}{\sqrt{n}(\sin\theta)^{a+\frac{1}{2}}}\left(\cos((n+a+\frac{1}{2})\theta-\frac{(2a+1)\pi}{4})+\frac{1}{n\sin\theta}O(1)\right),& c/n\le\theta\le\pi-c/n,\\
&~ O(n^a), \text{else}.
\end{cases}
   \end{aligned}
\end{equation}
\end{lemma}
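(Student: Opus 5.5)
This asymptotic is the classical Darboux/Hilb-type formula for Jacobi polynomials (Szeg\H{o}, Theorem 8.21.8 and its uniform refinement). The plan is to reproduce it from an integral representation rather than cite it blindly.

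\textbf{Regime $\theta\in[c/n,\pi-c/n]$.} By the symmetry $P^{(a,a)}_n(-x)=(-1)^nP^{(a,a)}_n(x)$ it suffices to treat $\theta\in[c/n,\pi/2]$. We start from the Laplace--Mehler type representation for ultraspherical polynomials. Since $P^{(a,a)}_n$ is a constant multiple of the Gegenbauer polynomial $C^{(a+1/2)}_n$, we may use
\begin{equation*}
C^{(\lambda)}_n(\cos\theta)=\frac{2^{1-\lambda}}{\Gamma(\lambda)^2}\frac{\Gamma(n+2\lambda)}{n!}(\sin\theta)^{1-2\lambda}\int_0^\theta\frac{\cos((n+\lambda)\phi)}{(\cos\phi-\cos\theta)^{1-\lambda}}\,\mathrm{d}\phi ,
\end{equation*}
valid for $0<\lambda<1$. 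For integer $a$ we do not have $\lambda<1$. Instead we use \eqref{DerJacobi} to reduce to $a=0$: $P^{(a,a)}_n$ is, up to explicit constants, the $a$-th derivative of $P^{(0,0)}_{n+a}$. We then differentiate the Legendre case $\lambda=\tfrac12$ in $x=\cos\theta$, keeping track of the factors $(\sin\theta)^{-1}$ that each derivative produces.

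The core step is a stationary-phase/endpoint analysis of the integral near $\phi=\theta$. Write $\cos\phi-\cos\theta\approx\sin\theta\,(\theta-\phi)$ and integrate by parts once in $\phi$. The leading endpoint contribution yields
\begin{equation*}
\sqrt{\pi/(2n\sin\theta)}\,\cos\big((n+a+\tfrac12)\theta-\tfrac{(2a+1)\pi}4\big).
\end{equation*}
The remainder is bounded by $(n\sin\theta)^{-1}$ times the main amplitude. Finally, the Gamma-function prefactor $\Gamma(n+2\lambda)/n!\sim n^{2\lambda-1}$, combined with the normalization between $P^{(a,a)}_n$ and $C^{(a+1/2)}_n$, fixes the constant $2^a\sqrt{2/\pi}\,n^{-1/2}$.

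\textbf{Main obstacle.} The hard part is making the error $O(1)/(n\sin\theta)$ uniform all the way down to $\theta\sim c/n$, where $n\sin\theta\sim c$ and the expansion degenerates. I would handle this by comparing with the Bessel (Hilb) approximation $P^{(a,a)}_n(\cos\theta)\approx c_a\,n^a(\theta/\sin\theta)^{1/2}(n\theta)^{-a}J_a(N\theta)$, where $N=n+a+\tfrac12$. The Hilb error is uniform for $\theta\le\pi/2$. The large-argument asymptotics of $J_a$ then supply the cosine term with an error of relative size $O((N\theta)^{-1})$, which is exactly the $\frac{1}{n\sin\theta}$ bound.

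\textbf{Remaining regime ($\theta<c/n$, or symmetrically near $\pi$).} Here we only need the crude bound $|P^{(a,a)}_n|\le P^{(a,a)}_n(1)=\binom{n+a}{n}=O(n^a)$. This follows from the classical maximum of ultraspherical polynomials for $a\ge 0$, or directly from the Hilb formula together with $|J_a(z)|\lesssim z^a$.
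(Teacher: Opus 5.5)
The paper gives no argument here; it just quotes Szeg\H{o} \S 8.21. Your final route is the standard derivation of the cited uniform Darboux formula (Theorem 8.21.13) from the Hilb-type formula (Theorem 8.21.12): Hilb's formula plus the large-argument asymptotics of $J_a$ on $[c/n,\pi/2]$, the parity $P^{(a,a)}_n(-x)=(-1)^nP^{(a,a)}_n(x)$ for the other half, and $|P^{(a,a)}_n|\le\binom{n+a}{n}$ (Theorem 7.32.1, valid for $a\ge 0$) elsewhere. Your parity check and crude bound are fine.

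The step that fails as written is the Hilb formula itself. Your weight $c_a n^a(n\theta)^{-a}=c_a\theta^{-a}$ is the Mehler--Heine normalization, which is accurate only for small $\theta$. The correct statement is
\begin{equation*}
\Big(\sin\tfrac{\theta}{2}\cos\tfrac{\theta}{2}\Big)^{a}P^{(a,a)}_n(\cos\theta)=N^{-a}\frac{\Gamma(n+a+1)}{n!}\Big(\frac{\theta}{\sin\theta}\Big)^{\frac12}J_a(N\theta)+\theta^{\frac12}O(n^{-\frac32}),\qquad \frac{c}{n}\le\theta\le\frac{\pi}{2},
\end{equation*}
so the prefactor is $2^a(\sin\theta)^{-a}(1+O(n^{-1}))$. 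With your $\theta^{-a}$, the Bessel asymptotics would give the amplitude $\theta^{-a}(\sin\theta)^{-1/2}$ instead of $2^a(\sin\theta)^{-a-1/2}$. Their ratio $(\sin\theta/\theta)^a$ is an $O(1)$ discrepancy for $\theta\sim 1$ and is not absorbed in $1+O((n\sin\theta)^{-1})$.

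With the correct weight the argument closes. Expanding $J_a(z)=\sqrt{2/(\pi z)}\,\big(\cos(z-\tfrac{(2a+1)\pi}{4})+O(z^{-1})\big)$ at $z=N\theta$ gives the main term with relative error $O((N\theta)^{-1})$. The Hilb remainder contributes relative error $O(n^{-1}\theta^{1/2}(\sin\theta)^{1/2})$. Both are $\lesssim (n\sin\theta)^{-1}$ on $[c/n,\pi/2]$.

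Your first paragraph (Laplace--Mehler integral plus endpoint analysis) contributes nothing to the proof and should be dropped or repaired. The representation holds for every $\lambda>0$, not only $0<\lambda<1$; at $\lambda=1$ it returns $\sin((n+1)\theta)/\sin\theta$. So no reduction to $a=0$ is needed. The reduction as you describe it is also not justified without further work. Differentiating an asymptotic relation needs derivative bounds on the remainder. The Mehler--Dirichlet integral cannot be differentiated under the integral sign, because its singular endpoint moves with $\theta$.

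Since you hand the uniformity down to $\theta\sim c/n$ to Hilb's formula anyway, your argument, like the paper's, ultimately rests on Szeg\H{o} \S 8.21. Hilb's formula, proved there via the differential equation by comparison with Bessel's equation, is the real content.
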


For \eqref{angular}, without loss of generality we assume $f,g$ are functions on $\mathbb{S}^2$, and we have 
\begin{align*}
  &~ [R_\nu, f]g(\omega_1) =  \sum_{n\ge 0}\varphi(\nu^{-1}n)\int_{\mathbb{S}^2} [f(\omega_1)-f(\omega_2)]g(\omega_2)\mathfrak{Z}_n(\langle \omega_1,\omega_2\rangle)\mathrm{d}\mu_{\mathbb{S}^2}(\omega_2)\\
  =&~ \int_{0}^1 \int_{\mathbb{S}^2}\pa_\theta f(\omega(t))\cdot \theta(\omega_1,\omega_2)g(\omega_2)\sum_{n\ge 0}\varphi(\nu^{-1}n)\mathfrak{Z}_n(\langle \omega_1,\omega_2\rangle)\mathrm{d}\mu_{\mathbb{S}^2}(\omega_2)\mathrm{d}t,
\end{align*}
where $\varphi(\nu^{-1}n) = \chi(\nu^{-1}n)-\chi((2\nu)^{-1}n)$, $\{\omega(t)\}_{0\le t\le 1}$ is the shorter circle path connecting $\omega_1$ and $\omega_2$ and $\theta(\omega_1,\omega_2)$ is the geodesic distance on $\mathbb{S}^2$.

Denote
\begin{equation}
K_\nu(\omega_1,\omega_2)=\sum_{n\ge 0}\varphi(\nu^{-1}n) \theta(\omega_1,\omega_2)\mathfrak{Z}_n(\langle \omega_1,\omega_2\rangle),
\end{equation}
and to prove \eqref{angular} it suffices to prove
\begin{equation}
\sup_{\omega_1}\Vert K_\nu(\omega_1,\omega_2)\Vert_{L^1_{\omega_2}}+\sup_{\omega_2}\Vert K_\nu(\omega_1,\omega_2)\Vert_{L^1_{\omega_1}}\lesssim \frac{1}{\nu}.
\end{equation}
This essentially follows from \eqref{SogForm}. With \eqref{A2} and \eqref{DerJacobi} we have
\begin{align*}
 &~ (2n+1)P_n^{(0,0)}(x) =\frac{1}{2n+3}\frac{\mathrm{d}^2}{\mathrm{d}x^2}P_{n+2}^{(0,0)}(x) + \frac{1}{2n-1}\frac{\mathrm{d}^2}{\mathrm{d}x^2}P_{n-2}^{(0,0)}(x)\\
 &~ \quad - \Big(\frac{1}{2n+3}+\frac{1}{2n-1}\Big)\frac{\mathrm{d}^2}{\mathrm{d}x^2}P_n^{(0,0)}(x)\\
 =&~ \frac{Q_{n}}{2n+3}P_{n}^{(2,2)}(x) + \frac{Q_{n-4}}{2n-1} P_{n-4}^{(2,2)}(x)- \Big(\frac{Q_{n-2}}{2n+3}+\frac{Q_{n-2}}{2n-1}\Big) P_{n-2}^{(2,2)}(x),\\
 &~ \text{where}\ Q_{n} = \frac{(n+3)(n+4)}{4},
\end{align*}
and therefore
\begin{align*}
 &~ \sum_{n\geq 0}\varphi(\nu^{-1}n)\mathfrak{Z}_n(x) = \sum_{n\geq 0}\varphi(\nu^{-1}n)\frac{2n+1}{4\pi}P_n^{(0,0)}(x)=\frac{1}{8\pi}\sum_{n\geq 0}P_n^{(2,2)}(x)\cdot D_n,\\
  &~ \text{where}\  D_n:=Q_n\Big[\frac{\varphi(\nu^{-1}(n+4))-\varphi(\nu^{-1}(n+2))}{2n+7}-\frac{\varphi(\nu^{-1}(n+2))-\varphi(\nu^{-1}(n))}{2n+3}\Big].
\end{align*}
In the region $\{\frac{c}{\nu}\le  \theta\le \pi-\frac{c}{\nu}\}$, in view of \eqref{SogForm}, we set
\begin{align*}
&~ C_n(\theta)=\sum_{0\le j\le n-1}\cos\Big((j+\frac{3}{2})\theta-\frac{3\pi}{4}\Big)=\frac{\sin\frac{n\theta}{2}}{\sin\frac{\theta}{2}}\cos\Big(\left(\frac{n}{2}+1\right)\theta-\frac{3\pi}{4}\Big),\\
&~ \big|\sum_{n\geq 0}P_n^{(2,2)}(\cos \theta)\cdot D_n\big|\lesssim |I_1(\theta)|+ |I_{2}(\theta)|,
\end{align*}
where
\begin{align*}
  &~ I_1(\theta) :=(\sin\theta)^{-5/2}\sum_{n\ge0}\frac{1}{\sqrt{n}}\cos((n+\frac{3}{2})\theta-\frac{3\pi}{4})\cdot D_n\\
  &~ =(\sin\theta)^{-5/2}\cdot \sum_{n\ge0}\frac{D_n}{\sqrt{n}}\cdot\left[C_{n+1}(\theta)-C_n(\theta)\right]\\
  &~ =(\sin\theta)^{-5/2}\cdot\sum_{n\ge0}C_n(\theta)\left[\frac{D_{n-1}}{\sqrt{n-1}}-\frac{D_n}{\sqrt{n}}\right]\\
  &~ I_2(\theta) :=(\sin\theta)^{-7/2}\sum_{n\ge0}\frac{|D_n|}{n^{3/2}}.
\end{align*}
Since $|{\frac{D_{n+1}}{\sqrt{n+1}}-\frac{D_{n}}{\sqrt{n}}}|\lesssim \nu^{-\frac{5}{2}},\ |D_n|\lesssim \nu^{-1}$ and ${C_n(\theta)}\lesssim {\sin(\frac{\theta}{2})}^{-1}$, we have
\begin{equation}
\int_{\frac{c}{\nu}\le \theta\le\pi-\frac{c}{\nu}}(\vert I_1(\theta)\vert+|I_2(\theta)|) \theta\sin\theta d\theta\lesssim \nu^{-1}.
\end{equation}
In the complement of the region $\{\frac{c}{\nu}\le  \theta\le \pi-\frac{c}{\nu}\}$, again by \eqref{SogForm}, we have 
\begin{equation}
  \sum_{n\geq 0}\varphi(\nu^{-1}n)\int_{0\le \theta\le \frac{c}{\nu}}{|P_n^{(2,2)}(\cos(\theta)) D_n |}\cdot \theta \sin\theta d\theta\lesssim \frac{1}{\nu}.
\end{equation}

Combining the above estimates, we complete the proof.

{\bf Funding declaration.} This work was supported by the National Natural Science Foundation of China [No. 12171097], the
Key Laboratory of Mathematics for Nonlinear Sciences (Fudan University), the Ministry of Education
of China, Shanghai Key Laboratory for Contemporary Applied Mathematics.

{\bf Data availability.} The authors confirm that the data supporting the findings of this study are available
within the article.

{\bf Conflict of interest.} The authors declare that this work does not have any conflicts of interest.

{\bf Ethics declaration.}  Not applicable.

\bibliographystyle{plainnat}
\bibliography{ref}

\end{document}